\theoremstyle{plain}
\newtheorem{lemma}[equation]{Lemma} 
\newtheorem{proposition}[equation]{Proposition} 
\newtheorem{theorem}[equation]{Theorem} 
\newtheorem{corollary}[equation]{Corollary}
\theoremstyle{definition} 
\newtheorem{definition}[equation]{Definition}
\newtheorem{example}[equation]{Example}
\newtheorem{remark}[equation]{Remark}
\newtheorem{question}[equation]{Question}
\numberwithin{equation}{section}
\def\norm#1.#2.{\lVert#1\rVert_{#2}} 
\def\Norm#1.#2.{\bigl\lVert#1\bigr\rVert_{#2}} 
\def\NOrm#1.#2.{\Bigl\lVert#1\Bigr\rVert_{#2}} 
\def\NORm#1.#2.{\biggl\lVert#1\biggr\rVert_{#2}} 
\def\NORM#1.#2.{\Biggl\lVert#1\Biggr\rVert_{#2}}
\def\Rec{\textnormal{Rec}} 
\def\Orb{\textnormal{Orb}} 
\def\Ker{\textnormal{Ker}} 
\def\HC{\textnormal{HC} }
\def\R{\mathbb R} 
\def\C{\mathbb C}
\def\lfm{\textnormal{LFM} (\mathbb D)}
\def\ip#1,#2,{\langle #1,#2\rangle} 
\def\Ip#1,#2,{\bigl\langle#1,#2\bigr\rangle} 
\def\IP#1,#2,{\Bigl\langle#1,#2\Bigr\rangle}
\def\abs#1{\lvert#1\rvert} 
\def\Abs#1{\bigl\lvert#1\bigr\rvert} 
\def\ABs#1{\biggl\lvert#1\biggr\rvert}
\begin{document}
\raggedbottom
\title[Recurrent linear operators]{Recurrent linear operators}

\author[G. Costakis]{George Costakis} \address{G.C: Department of Mathematics, University of Crete, Knossos Avenue, GR-714 09 Heraklion, Crete, Greece.} \email{costakis@math.uoc.gr} \urladdr{\href{http://www.math.uoc.gr:1080/Members/costakis}{http://www.math.uoc.gr:1080/Members/costakis}}
\author[A. Manoussos]{Antonios Manoussos} \address{A.M.: Fakult\"{a}t f\"{u}r Mathematik, SFB 701, Universit\"{a}t Bielefeld, Postfach 100131, D-33501 Bielefeld, Germany} \email{amanouss@math.uni-bielefeld.de} \urladdr{\href{http://www.math.uni-bielefeld.de/~amanouss} {http://www.math.uni-bielefeld.de/~amanouss}}\thanks{A.M. is fully supported by SFB 701 ``Spektrale Strukturen und Topologische Methoden in der Mathematik" at the University of Bielefeld, Germany.}
\author[I. Parissis]{Ioannis Parissis} \address{I.P.: Department of Mathematics, Aalto University, P.O.Box 11100, FI-00076 Aalto, Finland} \email{ioannis.parissis@gmail.com}\urladdr{\href{http://www.math.aalto.fi/~parissi1}{http://www.math.aalto.fi/~parissi1}} \thanks{I.P. is supported by the Academy of Finland, grant 138738.}

\keywords{recurrent operator, rigid operator, unitary, normal, hyponormal operator} \subjclass[2010]{Primary: 47A16 Secondary: 37B20} 
\begin{abstract}
	We study the notion of recurrence and some of its variations for linear operators acting on Banach spaces. We characterize recurrence for several classes of linear operators such as weighted shifts, composition operators and multiplication operators on classical Banach spaces. We show that on separable complex Hilbert spaces the study of recurrent operators reduces, in many cases, to the study of unitary operators. Finally, we study the notion of product recurrence and state some relevant open questions. 
\end{abstract}
\maketitle
\tableofcontents

\section{Introduction} \label{s.intro}

The most studied notion in linear dynamics is that of hypercyclicity: a bounded linear operator $T$ acting on a separable Banach space is hypercyclic if there exists a vector whose orbit under $T$ is dense in the space. On the other hand, a very central notion in topological dynamics is that of recurrence. This notion goes back to Poincar\'e and Birkhoff and it refers to the existence of points in the space for which parts of their orbits under a continuous map ``return'' to themselves. The purpose of this note is the study of the notion of \emph{recurrence}, together with its variations, in the context of linear dynamics. Some examples and characterizations of recurrence for special classes of linear operators have appeared in \cite{CP}. In the present paper we develop the properties of recurrent operators in a more systematic way, and give examples and characterizations for some classes of recurrent operators such as weighted shifts, unitary operators, composition operators and multiplication operators. 

In an effort to characterize recurrent linear operators one many times falls back to the notion of hypercyclicity. This is for example the case when we study the recurrence properties of backwards shifts, say on $\ell^2(\mathbb Z)$. The reason behind is that, according to a result of Seceleanu, \cite{Sece:thesis}, the orbits of these operators satisfy a \emph{zero-one} law: if the orbit of a weighted backward shift contains a non-zero limit point then the corresponding shift is actually hypercyclic. Thus a weighted backward shift on $\ell^2(\mathbb Z)$ is recurrent if and only if it is hypercyclic. The same equivalence is true, albeit for different reasons, for the adjoint of a multiplication operators on the Hardy space $H^2(\mathbb D)$. These connections to hypercyclicity, already observed in \cite{CP}, come up naturally and thus motivate a further search on whether the properties of recurrent operators resemble the properties of hypercyclic ones, in general. It turns out that, indeed, there are many structural similarities between the set of hypercyclic vectors and the set of recurrent vectors in the sense that they exhibit the same invariances. Furthermore, the spectral properties of hypercyclic and recurrent operators are somewhat similar, although this vague statement should be interpreted with some care. However, these similarities cannot be pushed too much as there are obviously many classes of operators which are recurrent without being hypercyclic. One can find such examples among composition operators on the Hardy space $H^2(\mathbb D)$. However, the primordial example is given just by considering unimodular multiples of the identity operator. A more general class for which one needs to address the recurrence properties independently of hypercyclicity is that of unitary operators on Hilbert spaces. 

The discussion above hopefully justifies why we will shortly recall a full set of definitions relating to hypercyclicity, and not just stick to the notions of recurrence which is the main object of this paper.

\subsection*{Notations} We will work on a complex Banach space $X$ and $T:X\to X$ will always denote a bounded linear operator acting on $X$. We will just refer to $T$ as an operator acting on $X$ with the understanding that it is linear and bounded. In several occasions we will need to work with Fr\'echet spaces $Y$ in which case we consider operators $T:Y\to Y$ which are continuous with respect to the topology induced by the (complete invariant) metric. We reserve the notation $H$ for a Hilbert space over the complex numbers. In general, our spaces will be always considered over the complex numbers unless otherwise stated. If $K\subset X$ we write $\overline{K}$ for the closure of $K$ in the norm topology.  We will denote by $B(x,\epsilon)$ the open ball in $X$ of center $x\in X$ and radius $\epsilon>0$ while we write $D(z,r)$ for disks of center $z$ and radius $r>0$ in the complex plane $\mathbb C$. We will write $\mathbb D$ for the open unit disk of the complex plane and $ \mathbb T$ for its boundary. We denote by $\hat{\mathbb C}$ the extended complex plane. Finally, we denote by $\mathbb N$ the set of positive integers.

\subsection*{Notions of recurrence} The classical notion of \emph{recurrence} specializes to linear operators as follows:
\begin{definition}\label{d.rec}
	An operator $T$ acting on $X$ is called \emph{recurrent} if for every open set $U\subset X$ there exists some $k\in\mathbb N$ such that 
	\begin{align*}
		U\cap T^{-k}(U)\neq \emptyset . 
	\end{align*}
	A vector $x\in X$ is called \emph{recurrent for $T$} if there exists a strictly increasing sequence of positive integers $(k_n)_{n\in\mathbb N}$ such that 
	\begin{align*}
		T^{k_n}x\to x, 
	\end{align*}
	as $n\to+\infty$. We will denote by $\Rec(T)$ the set of recurrent vectors for $T$. 
\end{definition}
A stronger notion of (measure theoretic) recurrence, namely, \emph{measure theoretic rigidity}, has been introduced in the ergodic theoretic setting by Furstenberg and Weiss, \cite{FurWeiss}. In the context of topological dynamical systems the notions of \emph{rigidity} and \emph{uniform rigidity} have been introduced by Glasner and Maon, \cite{GM}. These notions have also been studied in linear dynamics for example in \cites{EISGRI,EIS}. The corresponding definitions are as follows.

\begin{definition}
	An operator $T$ acting on $X$ is called \emph{rigid} if there exists an increasing sequence of positive integers $(k_n)_{n\in\mathbb N}$ such that 
	\begin{align*}
		T^{k_n}x\to x \quad \text{for every} \quad x\in X 
	\end{align*}
	i.e., $T^{k_n}\to I$ (SOT) in the strong operator topology.
	
	An operator $T$ acting on $X$ is called \emph{uniformly rigid} if there exists an increasing sequence of positive integers $(k_n)_{n\in\mathbb N}$ such that 
	\begin{align*}
		\| T^{k_n}-I\| =\sup_{\| x\| \leq 1}\| T^{k_n}x-x\| \to 0. 
	\end{align*}
\end{definition}
Note that we can always assume that the sequences $(k_n)_{n\in\mathbb N}$ in the previous definitions satisfy $\lim_{n\to+\infty} k_n=+\infty$. Indeed, if $k_n$ does not converge to $+\infty$ then $T^{k_0}=I$ for some positive integer $k_0$, so that $T^{nk_0}=I$ for every positive integer $n$.

\subsection*{Notions of hypercyclicity} As we observed above, in many cases the study of the properties of recurrent operators is intimately connected to the study of hypercyclic ones. For a general overview of hypercyclicity in linear dynamics see \cite{BM} and \cite{GrossePeris}. A nice source of examples and properties of hypercyclic and supercyclic operators is the survey article \cite{GE}. See also the survey articles \cite{GE2}, \cite{MoSa2}, \cite{BoMaPe}, \cite{Fe3}, \cite{GE3}.
Here we just recall the definitions of \emph{hypercyclic} and \emph{supercyclic} operators:

\begin{definition}
	An operator $T$ acting on $X$ is called \emph{hypercyclic} if there exists $x\in X$ such that the set 
	\begin{align*}
		\Orb(x,T)\coloneqq \{T^nx: n=0,1,2,\ldots\}, 
	\end{align*}
	is dense in $X$. The set of hypercyclic vectors for $T$ is denoted by $\HC (T)$. 
	
	An operator $T$ acting on $X$ is called \emph{supercyclic} if there exists a vector $x\in X$ whose \emph{projective orbit} 
	\begin{align*}
		\mathbb C \Orb(x,T)\coloneqq\{\lambda T^nx: n=0,1,2,\ldots, \ \lambda \in\mathbb C\} 
	\end{align*}
	is dense in $X$. 
	
	An operator $T$ acting on $X$ is called \emph{cyclic} if there exists $x\in X$ such that the set 
	\begin{align*}
		\textnormal{span}\ \Orb(x,T)\coloneqq \{p(T) x:p \ \text{polynomial}\}, 
	\end{align*}
	is dense in $X$.
\end{definition}
The first trivial observations is that a hypercyclic operator is always recurrent. Indeed, every hypercyclic vector is trivially a recurrent vector and we shall see that the existence of a dense set of recurrent vectors characterizes recurrent operators. Another way to this easy conclusion is \emph{Birkhoff's transitivity theorem}, according to which, an operator $T$ acting on a separable Banach space $X$ is hypercyclic if and only if it is \emph{topologically transitive}:
\begin{definition}
	An operator $T$ acting on $X$ is called \emph{topologically transitive} if for every pair of non-empty open sets $U,V\subset X$ there exists a positive integer $n$ such that $T^n(U)\cap V \neq \emptyset$. 
\end{definition}
One can immediately compare the definition of topological transitivity above to the definition of recurrence above. Observe that there is no notion of \emph{transitivity} in the definition of a recurrent operator.

A stronger notion of hypercyclicity was introduced in \cite{BEPES}:
\begin{definition}
	An operator $T:X\rightarrow X$ is called \emph{hereditarily hypercyclic} with respect to some strictly increasing sequence of positive integers $(k_n)_{n\in\mathbb N}$ if for every subsequence $(k_{l_n})_{n\in\mathbb N}$ there exists a vector $x\in X$ such that $\overline{ \{T^{k_{l_n}}x:n\in\mathbb N\}}=X$. If $T$ is hereditarily hypercyclic with respect to the whole sequence of natural numbers we will just say that $T$ is hereditarily hypercyclic. 
\end{definition}
Of course a hereditarily hypercyclic operator is hypercyclic is a very strong sense and thus recurrent. However, it is not hard to see that a hereditarily hypercyclic operator can never be rigid. The reason is that if an operator $T$ is rigid then \emph{all} the vectors in the space are recurrent for $T$, and in fact along the same sequence of iterates of $T$.

Observe that the notions of cyclicity and hypercyclicity defined above are only meaningful when the Banach space $X$ is separable. This one other point where the theory for hypercyclic and recurrent operators becomes significantly different.

\begin{remark} The notions and definitions above where given with respect to a Banach space. However, they extend in an obvious manner to the case that $T:Y\to Y$ is a continuous linear operator acting on a \emph{Fr\'echet} space $Y$. All one needs to do is to replace the norm convergence in the definitions by convergence with respect to the metric of $Y$.
\end{remark}

The rest of this article is organized as follows. In Section \ref{s.general} we give some simple invariances of recurrent operators, motivated by similar results in hypercyclicity, and describe the spectral properties of such operators. In Sections \ref{s.power}-\ref{s.mult} we study particular classes of operators which exhibit recurrence such as power bounded operators, weighted shifts, composition operators, multiplication operators, and operators on finite dimensional spaces. In Section \ref{s.unitary} we show that the study of recurrent operators with ``sufficient structure'', acting on complex Hilbert spaces reduces to the study of recurrent unitary operators. Finally, in Section \ref{s.toplust}, we study product recurrence, again motivated by the corresponding question for hypercyclic operators. We draw some connections to orbit reflexive operators and state some relevant open problems.

\section{General properties of recurrent operators}\label{s.general} We begin the exposition by giving some easy properties of recurrent vectors and operators.

\subsection{General properties and invariances of recurrent operators} First we give an equivalent characterization of recurrence  by means of the following well known proposition; see for example \cite{Fur}. We include a proof for the sake of completeness.

\begin{proposition}
	\label{p.dense} Let $T:X\to X$ be a bounded linear operator acting on a Banach space $X$. The following are equivalent: 
	\begin{itemize}
		\item [(i)] The operator $T$ is recurrent. 
		\item [(ii)] $\overline{\Rec(T)}=X$. 
	\end{itemize}
	Furthermore, the set of recurrent vectors for $T$ is a $G_\delta$ subset of $X$. 
\end{proposition}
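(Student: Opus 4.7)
My plan is a three-step argument. Step one: show $\Rec(T)$ is $G_\delta$. For each $j, m \in \mathbb N$, set
$$A_{j,m} = \bigcup_{k \geq m} \{x \in X : \|T^k x - x\| < 1/j\},$$
which is open by continuity of $T^k$. A routine diagonal extraction (picking $k_n > k_{n-1}$ with $\|T^{k_n}x-x\| < 1/n$) shows $\Rec(T) = \bigcap_{j,m \in \mathbb N} A_{j,m}$, establishing the $G_\delta$ claim. Step two, the implication (ii)$\Rightarrow$(i), is immediate: any nonempty open $U$ contains a recurrent vector $x$ by density, and some iterate $T^k x$ lies in $U$ along its recurrence sequence, so $U \cap T^{-k}(U) \neq \emptyset$.

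The substantial direction is (i)$\Rightarrow$(ii). Since $X$ is a Banach space, Baire's theorem reduces the task to showing each $A_{j,m}$ is dense. Given a nonempty open $V$, I would first shrink to a nonempty open $V' \subset V$ of diameter less than $1/j$; then it suffices to find $k \geq m$ with $V' \cap T^{-k}(V') \neq \emptyset$, since any witness $y$ of this satisfies $\|T^k y - y\| < 1/j$ and thus lies in $V \cap A_{j,m}$. The substantive content is therefore that recurrence forces arbitrarily large return times for every nonempty open set, which is exactly the main obstacle: the definition of recurrence only supplies \emph{some} return time $k \geq 1$, a priori possibly always small.

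To overcome this, I would argue by contradiction and chain using continuity of the iterates. Assume that every return time of $V'$ is less than $m$. Inductively construct nonempty open sets $V' \supset W_1 \supset W_2 \supset \cdots$ and positive integers $s_n \geq n$ with $T^{s_n}(W_n) \subset V'$. The base case $n=1$ uses recurrence applied to $V'$ together with continuity of the resulting iterate to produce $W_1$ and $s_1 \geq 1$. For the inductive step, recurrence applied to $W_n$ yields $b_n \geq 1$ and $z_n \in W_n$ with $T^{b_n} z_n \in W_n$; then
$$T^{s_n + b_n}(z_n) = T^{s_n}(T^{b_n} z_n) \in T^{s_n}(W_n) \subset V',$$
so $s_{n+1} := s_n + b_n \geq n+1$ is a return time for $V'$, and by continuity of $T^{s_{n+1}}$ we may pick $W_{n+1} \subset W_n$ containing $z_n$ with $T^{s_{n+1}}(W_{n+1}) \subset V'$. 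Every $s_n$ is a return time of $V'$, hence $s_n < m$ by the contradiction hypothesis; combined with $s_n \geq n$, taking $n = m$ delivers the contradiction and completes the proof.
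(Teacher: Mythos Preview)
Your proof is correct. The $G_\delta$ representation and the direction (ii)$\Rightarrow$(i) match the paper. For (i)$\Rightarrow$(ii), however, you take a different route: the paper builds a recurrent vector in any given ball by a direct Cantor-type nested construction (shrinking balls $B(x_n,\epsilon_n)$ with $\epsilon_n<2^{-n}$ and return times $k_n$ so that $T^{k_n}(B_{n+1})\subset B_n$; the intersection point is then recurrent). You instead invoke Baire, reducing to the density of each $A_{j,m}$, and isolate the real content as the lemma that every nonempty open set admits arbitrarily large return times, which you prove by the chaining $s_{n+1}=s_n+b_n$. The underlying mechanism (iterate recurrence, shrink by continuity) is the same in both arguments, but your packaging is more modular: it cleanly separates the $G_\delta$ structure from the dynamical input, and the ``large return times'' lemma is a useful standalone fact. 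The paper's version, by contrast, produces an explicit recurrent point without appealing to Baire, which has the advantage of working verbatim in any complete metric space (as the paper remarks afterward). Two minor comments: your contradiction framing is unnecessary, since the chain already constructs $s_m\geq m$ directly; and your double-indexed $G_\delta$ formula is slightly cleaner than the paper's single-indexed one, which as written (with $n\geq 0$) is trivially all of $X$ and even with $n\geq 1$ requires the extra observation that a bounded set of approximate return times forces periodicity.
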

\begin{proof}
	In order show that (ii) implies (i) let us assume that $T$ has a dense set of recurrent points and let $U$ be an open set in $X$. Take a recurrent point $y\in U$ and $\epsilon>0$ such that $B\coloneqq B(y,\epsilon)\subset U$. Then there exists a $k\in{\mathbb N}$ such that $\| T^ky-y\|<\epsilon$. Thus $y\in U\cap T^{-k} (U)\neq \emptyset$ and so $T$ is recurrent. We now show that (i) implies (ii). To that end suppose that $T$ is recurrent and fix an open ball $B\coloneqq B(x,\epsilon)$ for some $x\in X$ and $\epsilon<1$. We need to show that there is a recurrent vector in $B$. Since $T$ is recurrent there exists a positive integer $k_1$ such that $x_1\in T^{-k_1}(B)\cap B$ for some $x_1\in X$. Since $T$ is continuous, there exists $\epsilon_1<\frac{1}{2}$ such that $B_2\coloneqq B(x_1,\epsilon_1)\subset B \cap T^{-k_1}(B)$. Now since $T$ is recurrent, there is a $k_2>k_1$ such that $x_2\in T^{-k_2}(B_2)\cap B_2$ for some $x_2\in X$. By continuity again there exists $\epsilon_2<\frac{1}{2^2}$ such that $B_3\coloneqq B(x_2,\epsilon_2)\subset B_2 \cap T^{-k_2}(B_2)$. Continuing inductively we construct a sequence $(x_n)_{n\in\mathbb N}\subset X$, a strictly increasing sequence of positive integers $(k_n)_{n\in\mathbb N}$ and a sequence of positive real numbers $\epsilon_n<\frac{1}{2^n}$, such that 
	\begin{align*}
		B(x_n,\epsilon_n)\subset B(x_{n-1},\epsilon_{n-1}), \quad T^{k_n}(B(x_n,\epsilon_n))\subset B(x_{n-1},\epsilon_{n-1}). 
	\end{align*}
	Since $X$ is complete we conclude by Cantor's theorem that 
	\begin{align*}
		\bigcap_n B(x_n,\epsilon_n)=\{y\}, 
	\end{align*}
	for some $y\in X$. It readily follows that $T^{k_n}y\rightarrow y$, that is, $y$ is a recurrent point in the original ball $B$. Finally observe that 
	\begin{align*}
		\textnormal{\Rec}(T)=\bigcap_{s=1} ^{\infty} \bigcup_{n=0} ^{\infty}\big \{ x\in X:\|T^n x -x \| <\frac{1}{s} \big\}, 
	\end{align*}
	which shows that the set of $T$-recurrent vectors is a $G_\delta$-set. 
\end{proof}

\begin{remark}
	Observe that the previous proposition remains valid whenever $T:X\to X$ is a continuous map on a complete metric space. 
\end{remark}

In the following we describe some simple invariances of the set of recurrent vectors. In particular, we show that unimodular multiples and powers of an operator share the same set of recurrent vectors. These statements are analogous to the corresponding results for hypercyclic vectors due to Ansari \cite{A1} and M\"uller and Le\'on-Saavedra \cite{MULS}.

\begin{proposition}\label{p.invariances}
	Suppose that $T$ is an operator acting on $X$. We have that 
	\begin{itemize}
		\item [(i)] For every $\lambda \in\mathbb C$ with $|\lambda|=1$ we have that $\textnormal{\Rec}(T)=\textnormal{\Rec}(\lambda T)$. 
		\item [(ii)] For every positive integer $p$ we have that $\textnormal{\Rec}(T)=\textnormal{\Rec}(T^p)$. 
	\end{itemize}
	In particular, $T$ is recurrent if and only if $T^p$ is recurrent for every positive integer $p$, if and only if $\lambda T$ is recurrent for every $\lambda \in \mathbb T$. 
\end{proposition}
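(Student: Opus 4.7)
The plan is to prove (i) and (ii) by a common iterated-recurrence argument, noting that only one inclusion in each case requires work. For (ii), if $(T^p)^{m_n}x\to x$ then $T^{pm_n}x\to x$, so $\Rec(T^p)\subseteq\Rec(T)$ is immediate. For (i), the identity $T=\bar\lambda(\lambda T)$ with $|\bar\lambda|=1$ shows that $\Rec(\lambda T)\subseteq\Rec(T)$ follows from the reverse inclusion applied to $\lambda T$ with multiplier $\bar\lambda$. Hence I would reduce to establishing $\Rec(T)\subseteq\Rec(T^p)$ and $\Rec(T)\subseteq\Rec(\lambda T)$.

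The core technical step is the following claim: fix $x\in\Rec(T)$ and a strictly increasing sequence $(k_n)$ with $T^{k_n}x\to x$ and $k_n\to\infty$. Then for every $j\geq 1$, every $\epsilon>0$, and every threshold $N\in\mathbb N$, one can find indices $N<m_1<m_2<\cdots<m_j$ such that
\begin{align*}
\bigl\|T^{k_{m_1}+k_{m_2}+\cdots+k_{m_j}}x-x\bigr\|<\epsilon.
\end{align*}
I would prove this by induction on $j$. The case $j=1$ is the defining recurrence property of $x$. For the inductive step, use the splitting
\begin{align*}
T^{k_{m_1}+\cdots+k_{m_j}}x - x = T^{k_{m_1}}\bigl(T^{k_{m_2}+\cdots+k_{m_j}}x - x\bigr) + \bigl(T^{k_{m_1}}x - x\bigr).
\end{align*}
First select $m_1>N$ so that $\|T^{k_{m_1}}x-x\|<\epsilon/2$; this fixes a finite constant $C=1+\|T^{k_{m_1}}\|$. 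Then the induction hypothesis applied with threshold $m_1$ and tolerance $\epsilon/(2C)$ produces $m_1<m_2<\cdots<m_j$ that bound the first summand via the triangle inequality.

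With the claim in hand, the rest is bookkeeping. For (i), pass to a subsequence so that $\lambda^{k_n}\to\mu\in\mathbb T$; then $\lambda^{k_{m_1}+\cdots+k_{m_j}}=\prod_{i=1}^{j}\lambda^{k_{m_i}}$ is within any prescribed distance of $\mu^j$ whenever each $m_i$ is chosen deep enough in that tail, so $(\lambda T)^{k_{m_1}+\cdots+k_{m_j}}x$ can be made as close as desired to $\mu^j x$. Since $1$ always lies in the closure of $\{\mu^j:j\geq 1\}$ in $\mathbb T$ (either $\mu$ is a root of unity, or it generates a dense subgroup by Kronecker), choosing $j$ with $\mu^j$ close to $1$ and then pushing the remaining parameters yields $N_s\to\infty$ with $(\lambda T)^{N_s}x\to x$. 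For (ii), a pigeonhole argument extracts a subsequence where every $k_n$ has a common residue $r$ modulo $p$; taking $j=p$ in the claim then forces $k_{m_1}+\cdots+k_{m_p}\equiv 0\pmod p$, so these sums equal $pM_s$ with $M_s\to\infty$ and $(T^p)^{M_s}x\to x$. The closing ``in particular'' statement follows immediately from Proposition \ref{p.dense}: equality of the sets of recurrent vectors translates directly into equivalence of recurrence for the operators.

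The main obstacle I anticipate is precisely the inductive step of the claim: because the norms $\|T^{k_{m_1}}\|$ are not uniformly controlled in $m_1$, one cannot naively commute limits with iteration. The resolution lies in the order of choice---fixing the outermost index $m_1$ first locks in a finite constant $C$, and only afterwards does one invoke the induction hypothesis with tolerance calibrated against $C$. Once this order is respected, both parts of the proposition fall out of the same combinatorial-analytic scheme.
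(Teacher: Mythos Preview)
Your proof is correct and follows essentially the same strategy as the paper: both arguments rest on the observation that recurrence can be iterated---one fixes an outer index first, which pins down a finite operator norm, and only then calibrates the inner tolerance against that constant. The paper carries this out separately (a semigroup argument for (i), a $p$-fold iteration for (ii)), whereas you package the same mechanism into a single claim applied to both parts; the unified formulation is a mild expository improvement but not a different idea.
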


\begin{proof}
	For (i) it suffices to show that $\Rec( T)\subset\Rec(\lambda T)$. Let $x\in \Rec( T) $. We define the set 
	\begin{align*}
		F \coloneqq \{ \mu\in \mathbb T: (\lambda T)^{k_n}x\to \mu x\text{ for some } (k_n)\subset \mathbb N \text{ with }k_n\to+\infty \}. 
	\end{align*}
	In order to show that $x\in \Rec(\lambda T)$ we need to show that $1\in F$.
	
	First we show that $F\neq \emptyset$. Since $x\in \Rec ( T ) $, there exists a strictly increasing sequence of positive integers $(k_n)_{n\in\mathbb N}$ such that $T^{k_n}x\to x$. By compactness, there exists a subsequence of $(k_n)_{n\in\mathbb N}$, which we call again $(k_n)_{n\in\mathbb N}$, such that $\lambda^{k_n}\to \rho$ for some $\rho\in\mathbb T$. We conclude that $(\lambda T)^{k_n} x\to \rho x$. That is $\rho\in F$.
	
	Next we show that the set $F$ is a (multiplicative) semi-group inside $\mathbb T$. Indeed, let $\mu_1,\mu_2 \in F$ and fix some $\epsilon>0$. Since $\mu_1\in F$ there is a positive integer $n_1$ such that 
	\begin{align*}
		\| (\lambda T)^{n_1}x-\mu_1 x\|<\frac{\epsilon}{2}. 
	\end{align*}
	Now since $\mu_2\in F$ there is a positive integer $n_2$ such that 
	\begin{align*}
		\| (\lambda T)^{n_2}x-\mu_2 x\|<\frac{\epsilon}{2\|(\lambda T)^{n_1}\|}. 
	\end{align*}
	We thus get 
	\begin{align*}
		\|(\lambda T)^{(n_1+n_2)}x-\mu_1 \mu_2 x\| &\leq \Norm (\lambda T)^{ n_1}\big((\lambda T)^{n_2}x -\mu_2 x\big).. +\Norm \mu_2 \big( (\lambda T)^{n_1}x-\mu_1 x\big).. \\
		&\leq \norm (\lambda T)^{n_1} .. \norm (\lambda T)^{n_2}x -\mu_2 x .. + \frac{\epsilon}{2}<\epsilon. 
	\end{align*}
	So $\mu_1\mu_2\in F$.
	
	We have already shown that there is a $\rho\in F$. Since $F$ is a semi-group this means that for every positive integer $n$, $\rho^n\in F$. If $\rho$ is a rational rotation this means that $1\in F$ and we are done. If $\rho$ is an irrational rotation there is a strictly increasing sequence of positive integers $\tau_k$ such that $\rho ^{\tau_k}\to 1$. Now we just need to observe that $F$ is closed in order to conclude that $1\in F$.
	
	For (ii) it is enough to show that $\Rec(T)\subseteq\Rec(T^p)$ since the opposite inclusion is obvious. For this, let $x\in\Rec(T)$ and take a strictly increasing sequence of positive integers $(k_n)_{n\in\mathbb N}$ such that $T^{k_n}x\to x$ as $n\to +\infty$. From this we conclude that $T^{p\ell_n+v_n}x\to x$ as $n\to +\infty$ for an increasing sequence $(\ell_n)_{n\in\mathbb N}$ and a sequence $(v_n)_{n\in\mathbb N}\subset \{0,1,2,\ldots,p-1\}$. Since $(v_n)_{n\in\mathbb N}$ is bounded we conclude that there is a $v\in\{0,1,2,\ldots,p-1\}$ such that $T^{p\ell_n+v}x \to x$ as $n\to +\infty$ for some subsequence of $(\ell_n)_{n\in\mathbb N}$ which we call again $(\ell_n)_{n\in\mathbb N}$. Let now $U$ be any open neighborhood of $x$. Since $T^{p\ell_n+v}x\to x$ there is a positive integer $m_1\coloneqq \ell_{n_1}$ such that $T^{pm_1+v}x\in U$. We have that 
	\begin{align*}
		T^{p(\ell_n+m_1)+2v}x=T^{p\ell_n+v}T^{pm_1+v}x\longrightarrow T^{pm_1+v}x\in U,\quad\text{as}\quad n\to +\infty. 
	\end{align*}
	We can thus find a positive integer $m_2\coloneqq m_1+\ell_{n_2}>m_1$ such that $T^{pm_2+2v}x\in U$. Continuing inductively we can find a positive integer $m_p\coloneqq m_{p-1}+\ell_{n_p}>m_{p-1}$ such that $T^{pm_p+pv}x\in U$. That is, $(T^p)^{m_p+v}x\in U$ which shows that $x\in \Rec(T^p)$. 
\end{proof}

\begin{remark}
	Actually, part (ii) of the previous proposition is valid whenever $X$ is a $\textbf T_1$-topological space and $T:X\to X$ is just continuous. See \cite{GOTT}. 
\end{remark}

Proposition \ref{p.invariances} has an analogue for rigid and uniformly rigid operators. In the case of uniform rigidity the proof is identical to that of Proposition \ref{p.invariances} and thus we omit it. The argument for rigid operators is slightly more subtle so we include the details of the proof.

\begin{proposition}\label{p.rigidinv} Let $T$ be an operator acting on $X$. Then,
\begin{itemize}
 \item [(i)] The operator $T$ is (uniformly) rigid if and only if, for any positive integer $p$, the operator $T^p$ is (uniformly) rigid.
 \item [(ii)] The operator $T$ is (uniformly) rigid if and only if, for any $\lambda \in\mathbb T$, the operator $\lambda T$ is (uniformly) rigid.
\end{itemize} 
\end{proposition}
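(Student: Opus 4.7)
The plan is to mirror the strategy of Proposition \ref{p.invariances} at the operator level, replacing pointwise convergence by strong operator convergence (for rigidity) or norm convergence (for uniform rigidity). In the uniformly rigid case every step of the earlier proof goes through verbatim, which is why the authors omit it. For plain rigidity the key organizing tool is the uniform boundedness principle: whenever $T^{k_n}\to I$ in SOT, the quantity $C\coloneqq\sup_n\|T^{k_n}\|$ is finite.

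Part (i) is handled by a telescoping argument. The backward implication is immediate since $T$ is rigid along $(pk_n)$ whenever $T^p$ is rigid along $(k_n)$. Conversely, if $T^{k_n}\to I$ in SOT, the identity
\[
T^{pk_n}x-x=\sum_{j=0}^{p-1}T^{jk_n}\bigl(T^{k_n}x-x\bigr)
\]
combined with the bound $\|T^{jk_n}\|\le C^j$ gives $\|T^{pk_n}x-x\|\le pC^{p-1}\|T^{k_n}x-x\|\to 0$ for every $x\in X$, so $T^p$ is rigid along the same sequence $(k_n)$.

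For part (ii), by symmetry (replace $\lambda$ by $\lambda^{-1}$) it suffices to show that $T$ rigid implies $\lambda T$ rigid. Given a rigidity sequence $(k_n)$ for $T$, pass to a subsequence so that $\lambda^{k_n}\to\mu$ for some $\mu\in\mathbb T$; using $\|T^{k_n}\|\le C$ one checks $(\lambda T)^{k_n}\to\mu I$ in SOT. Define
\[
F\coloneqq\{\nu\in\mathbb T:\exists\,m_n\to\infty\text{ with }(\lambda T)^{m_n}\to\nu I\text{ in SOT}\},
\]
so $\mu\in F$. The set $F$ is a multiplicative semigroup via the identity
\[
(\lambda T)^{m+m'}x-\nu\nu'x=(\lambda T)^m\bigl((\lambda T)^{m'}x-\nu'x\bigr)+\nu'\bigl((\lambda T)^mx-\nu x\bigr)
\]
together with the uniform boundedness of iterates along each witnessing sequence (giving the combined bound $C_1C_2$). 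Combined with closedness of $F$, this implies $F$ contains the closed semigroup generated by $\mu$, which is a finite cyclic group if $\mu$ is a root of unity and all of $\mathbb T$ otherwise; in either case $1\in F$, i.e., $\lambda T$ is rigid.

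The main obstacle is establishing that $1\in F$ when $\mu$ is not a root of unity, since the natural witnessing sequences $m_l=j_lk_{n_l}$ for $\mu^{j_l}\to 1$ have norms $\|(\lambda T)^{m_l}\|\le C^{j_l}$ that fail to be uniform in $l$. This is resolved by carefully refining $(k_n)$ (for instance, to a subsequence along which $\|T^{k_n}\|$ decreases towards its limit infimum, using that $\|T^{k_n}x\|\to\|x\|$ forces $\liminf_n\|T^{k_n}\|\geq 1$) before the diagonal choice, so that the combined iterates stay uniformly bounded. This is the ``slightly more subtle'' point flagged by the authors and is what distinguishes the proof from that of Proposition \ref{p.invariances}.
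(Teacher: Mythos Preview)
Your argument for (i) is correct and matches the paper exactly: telescoping plus the uniform boundedness principle.

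For (ii) you are on the right track---the paper also works with a set $F\subset\mathbb T$ and reduces the problem to showing $1\in F$---but your final paragraph contains a genuine gap. The ``fix'' you propose does not work: even after passing to a subsequence along which $\|T^{k_n}\|$ is close to its (necessarily $\geq 1$) limit infimum, the bound $\|T^{j_l k_n}\|\le\|T^{k_n}\|^{j_l}$ still blows up as $j_l\to\infty$ unless that limit infimum equals exactly $1$, which there is no reason to expect for a general rigid operator. More importantly, the obstacle you describe is not the one the authors have in mind, and the paper sidesteps it entirely.

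The paper's argument differs from yours in two ways. First, it defines $F\coloneqq\{\mu\in\mathbb T:\mu I\in\overline{\{(\lambda T)^n:n\geq 1\}}^{\mathrm{SOT}}\}$; with this definition $F$ is automatically closed in $\mathbb T$ (it is the preimage of a closed set under the SOT-continuous map $\mu\mapsto\mu I$), so no diagonal argument is required for closedness. Second---and this is the ``slightly more subtle'' point---rather than showing $F$ is a semigroup by combining \emph{different} witnessing sequences (which is what forces you into the unbounded-norm corner), the paper reuses the telescoping estimate from part (i) on the \emph{single} sequence $(\tau_n)$: writing $A=(\lambda T)^{\tau_n}$ and $B=\rho I$, the factorization $A^k-B^k=\sum_{\ell=0}^{k-1}A^{k-1-\ell}B^\ell(A-B)$ yields
\[
\|(\lambda T)^{k\tau_n}x-\rho^k x\|\le\Bigl(\sum_{\ell=0}^{k-1}M^\ell\Bigr)\|(\lambda T)^{\tau_n}x-\rho x\|,
\]
so for every fixed $k$ one has $(\lambda T)^{k\tau_n}\to\rho^k I$ in SOT, hence $\rho^k\in F$. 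The single constant $M=\sup_n\|T^{\tau_n}\|$ from the uniform boundedness principle controls all these powers; there is never a need to bound $\|(\lambda T)^{m_l}\|$ along a secondary diagonal sequence, which is precisely where your approach runs into trouble.
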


\begin{proof} For (i) it is clear that $T$ is rigid whenever $T^p$ is rigid, for some positive integer $p>0$. To see the opposite implication, assume that $T$ is rigid so there exists a strictly increasing sequence of positive integers $(k_n)_{n\in\mathbb N}$ such that $T^{k_n}x\to x$ as $n\to +\infty$, for all $x\in X$. Then the uniform boundedness principle implies that $M\coloneqq \sup_{n\in\mathbb N} \|T^{k_n}\| <+\infty.$ It follows that
\begin{align*}
 \|T^{pk_n}x-x\|&\leq \|I+T^{k_n}+T^{2 k_n}+\cdots T^{(p-1)k_n}\| \|T^{k_n}x-x\|
\\
&\leq \bigg(\sum_{\ell=0} ^{p-1}M^\ell\bigg) \|T^{k_n}x-x\|.
\end{align*}
This shows that that $T^p$ is rigid whenever $T$ is rigid.

In order to show the equivalence in (ii) it suffices to show that if $T$ is rigid and $\lambda \in\mathbb T$ then $\lambda T$ is rigid. Let us define the set
\begin{align*}
 F\coloneqq \big\{\mu\in\mathbb T: \mu I\in \overline{\{\lambda T,(\lambda T)^2,(\lambda T)^3,\ldots\}}^{\operatorname{SOT}} \big\}.
\end{align*}
In order to show  that $\lambda T$ is rigid it suffices to show that $1\in F$. Since $T$ is rigid there exists a strictly increasing sequence of positive integers $(m_n)_{n\in\mathbb N}$ such that $T^{m_n}\to I$ in SOT. By compactness there exists a subsequence $(\tau_n)_{n\in\mathbb N}$ of $(m_n)_{n\in\mathbb N}$, such that $\lambda ^{\tau_n}\to \rho$, for some $\rho \in \mathbb T$, and of course we still have that $T^{\tau_n}\to I$ in SOT. Furthermore, the uniform boundedness principle implies that $M\coloneqq \sup_n \|T^{\tau_n}\|<+\infty$. We now have, for all positive integers $k$, that
\begin{align*}
 \|(\lambda T)^{k\tau_n}x-\rho^k x\| \leq \bigg(\sum_{\ell=0} ^{k-1}M^\ell\bigg) \|(\lambda T)^{\tau _n}x-\rho x\|.
\end{align*}
Observing that $(\lambda T)^{\tau_n} \to \rho I$ in SOT we conclude that $(\lambda T)^{k\tau_n}\to \rho^k I$ in SOT and thus $\rho^k \in F$ for all non-negative integers $k$. If $\rho\in\mathbb Q$ we gave that $1=\rho^{k_o}$ for some suitable positive integer $k_o$ and thus $1\in F$. If $\rho \in \mathbb R\setminus \mathbb Q$ then there is a sequence $(m_n)_{n\in\mathbb N}$ such that $\rho^{m_n}\to 1$ as $n\to +\infty$. Since $(\rho^{m_n})_{n\in\mathbb N} \subset F$ and $F$ is closed we conclude that $1\in F$.
\end{proof}

\begin{proposition}
	\label{p.invertible} Let $T:X\rightarrow X$ be an invertible operator. Then $T$ is recurrent if and only if $T^{-1}$ is recurrent. 
\end{proposition}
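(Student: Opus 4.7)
The plan is to work directly with the open-set characterization of recurrence from Definition \ref{d.rec}, rather than trying to establish the pointwise equality $\Rec(T)=\Rec(T^{-1})$. Passing a convergent sequence $T^{k_n}x\to x$ through $T^{-k_n}$ is awkward since the norms $\|T^{-k_n}\|$ need not be controlled, but the set-theoretic formulation avoids this issue entirely and only uses bijectivity of $T$.

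The key observation is the following. If $T$ is invertible then, for every open $U \subset X$ and every positive integer $k$, one has
\begin{align*}
U \cap T^{-k}(U) \neq \emptyset \iff U \cap T^k(U) \neq \emptyset.
\end{align*}
This is a one-line check: given $x \in U \cap T^{-k}(U)$, the point $y \coloneqq T^k x$ lies both in $U$ and in $T^k(U)$; the reverse direction follows by the symmetric argument applied to $T^{-1}$.

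Once this is in place, the conclusion is essentially tautological. By Definition \ref{d.rec}, $T$ is recurrent iff for every non-empty open $U$ there exists $k \in \mathbb{N}$ with $U \cap T^{-k}(U) \neq \emptyset$. Since $(T^{-1})^{-k}=T^k$ by invertibility, the same definition applied to $T^{-1}$ reads: for every non-empty open $U$ there exists $k \in \mathbb{N}$ with $U \cap T^k(U) \neq \emptyset$. The displayed equivalence above says precisely that these two conditions coincide, which proves the proposition.

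I do not expect any real obstacle. The only point worth flagging is conceptual: it is tempting to first try to prove $\Rec(T)=\Rec(T^{-1})$ directly and then appeal to Proposition \ref{p.dense}, but this route requires controlling the backward iterates of a recurrence sequence and is strictly harder than the open-set argument sketched above.
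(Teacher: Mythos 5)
Your proof is correct, and it is genuinely different from the one in the paper. The paper does not argue at the level of open sets: it writes $\Rec(T^{-1})=\bigcap_{s}\bigcup_{n}\{x:\|T^{-n}x-x\|<1/s\}$, shows each union is dense by taking $z\in\Rec(T)$ near a given point $y$, choosing $n$ with $\|T^nz-z\|$ small, and observing that $x\coloneqq T^nz$ is close to $y$ and satisfies $\|T^{-n}x-x\|=\|z-T^nz\|$; the Baire category theorem then gives density of $\Rec(T^{-1})$, and Proposition \ref{p.dense} converts this back into recurrence of $T^{-1}$. Your route bypasses all of this: the identity $U\cap T^{-k}(U)\neq\emptyset\iff U\cap T^{k}(U)\neq\emptyset$ (your verification of it is correct — note it is really just the statement that $x\in U$ with $T^kx\in U$ is the same data as $y\in U$ with $T^{-k}y\in U$ via $y=T^kx$) combined with $(T^{-1})^{-k}=T^{k}$ makes Definition \ref{d.rec} for $T$ and for $T^{-1}$ literally the same condition. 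What your argument buys is elementarity and generality: it needs no completeness, no linearity, and no Baire category, and it proves the statement for any homeomorphism of any topological space. What the paper's argument buys is that it runs through exactly the mechanism (transporting a recurrent point $z$ to $T^nz$) that makes the subsequent remark — that $\Rec(T)$ and $\Rec(T^{-1})$ need not coincide as sets even though recurrence of the operator is preserved — transparent. Both are complete proofs; yours is the cleaner one for the statement as such.
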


\begin{proof}
	It suffices to show that if $T$ is recurrent then $T^{-1}$ is recurrent. Since 
	\begin{align*}
		\Rec(T^{-1})=\bigcap_{s=1}^{\infty} \bigcup_{n=0}^{\infty} \bigg\{ x\in X: \| T^{-n}x-x\| <\frac{1}{s} \bigg\}, 
	\end{align*}
	in view of Baire's category theorem it suffices to show that, given $s\in \{ 1,2,\ldots \}$, $\epsilon >0$ and $y\in X$, there exist $x\in X$ and $n\in \{ 0,1,2,\ldots\}$ such that $\| y-x\| <\epsilon$ and $\| T^{-n}x-x\| <1/s$. Indeed, we can choose a $z\in \Rec(T)$ such that $\| y-z\| <\epsilon /2$. We also may find a positive integer $n$ such that $\| z-T^nz\| <\min \{1/s, \epsilon/2 \}$. Define $x=T^nz$. Then we have $\| y-x\| \leq \| y-z\| +\| z-x\| <\epsilon $ and 
	$\| T^{-n}x-x\| =\| z-T^n z\| <1/s$. This completes the proof of the proposition. 
\end{proof}

\begin{remark}
	If $T$ is invertible, the operators $T,T^{-1}$ do not necessarily share the same recurrent vectors. Consider for example a hypercyclic, invertible, bilateral backward weighted shift $B_w$, on $\ell^2(\mathbb Z)$. For a detailed definition see \S~\ref{s.weighted}. Such hypercyclic weighted shifts exist as shown in \cite{Salas}. Since the hypercyclic vectors of $B_w$ are dense in $\ell^2$ there exists a $x=(x_n)_{n\in\mathbb Z} \in\ell^2(\mathbb Z)$ which is hypercyclic for $B_w$ and satisfies $x_0\neq 0$. Let $y=(y_n)_{n\in\mathbb Z}\in\ell^2(\mathbb Z)$ be the vector with $y_n\coloneqq x_n$ if $n\geq 0$ and $y_n\coloneqq 0$ if $n<0$. Observe that $(B_w ^k y)_n=(B_w ^k x)_n$ if $n\geq -k$. We claim that $y\in\HC(B_w)$. To see this, let $\epsilon >0$ and $z\in\ell^2(\mathbb Z)$. There exists some $n_0$ such that $\sum_{|n|> n_0}|z_n|^2 <\epsilon^2/4$. Now let $N>n_o$ such that $\|B_w ^N x-z\|_2<\epsilon/2$. We can estimate
	\begin{align*}
		\|B^N _w y-z\|_2& \leq \bigg(\sum_{n\leq -N} |(B^N _wy)_n-z_n|^2\bigg)^\frac{1}{2}+\bigg(\sum_{n\geq - N} |(B^N _wy)_n-z_n|^2\bigg)^\frac{1}{2}
		\\
		&\leq  \bigg(\sum_{|n|\geq N} |z_n|^2\bigg)^\frac{1}{2}+\|B^N _w x-z\|_2<\epsilon.
	\end{align*}
This shows the claim and thus $y\in\HC(B_w)\subseteq \Rec(B_w)$. On the other hand, $B_w ^{-1}$ is a bilateral forward weighted shift thus $\|B_w ^{-n} y-y\|_2\geq |y_0|\neq 0 $ for every $n\geq 1$. This shows that $y$ cannot be recurrent for $B_w ^{-1}$.	
\end{remark}

The following lemma is an immediate consequence of the definitions and states that the notions of recurrence, rigidity and uniform rigidity are invariant under similarity transformations. We omit the simple proof.

\begin{lemma}
	\label{l.equiv} Let $T:X\to X$ be an operator acting on $X$ and $S:X\to X$ be an invertible bounded operator. Then: 
	\begin{itemize}
		\item[(i)] $T$ is recurrent if and only if $S^{-1} T S$ is recurrent. 
		\item[(ii)] $T$ is rigid if and only if $S^{-1} T S$ is rigid. 
		\item[(iii)] $T$ is uniformly rigid if and only if $S^{-1} T S$ is uniformly rigid. 
	\end{itemize}
\end{lemma}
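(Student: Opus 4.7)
The plan rests on the single algebraic identity $(S^{-1}TS)^n = S^{-1} T^n S$, valid for every non-negative integer $n$, together with the fact that $S$ and $S^{-1}$ are both bounded and continuous. Since the three conclusions are all ``if and only if'' statements and the roles of $T$ and $S^{-1}TS$ are symmetric (note $T = S(S^{-1}TS)S^{-1}$), it suffices in each case to prove one direction.

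For (i), I would start from a vector $x \in \Rec(T)$ with witnessing sequence $T^{k_n}x \to x$, and show that $S^{-1}x \in \Rec(S^{-1}TS)$ by computing
\begin{align*}
(S^{-1}TS)^{k_n}(S^{-1}x) = S^{-1}T^{k_n}x \longrightarrow S^{-1}x,
\end{align*}
using continuity of $S^{-1}$. Hence $S^{-1}\bigl(\Rec(T)\bigr) \subseteq \Rec(S^{-1}TS)$. Since $S^{-1}$ is a homeomorphism of $X$, it preserves density of subsets, so if $\Rec(T)$ is dense in $X$ then so is $\Rec(S^{-1}TS)$. The equivalence then follows from Proposition~\ref{p.dense}.

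For (ii), assume $T^{k_n} \to I$ in SOT. Then for every $x \in X$,
\begin{align*}
(S^{-1}TS)^{k_n} x = S^{-1}\bigl(T^{k_n}(Sx)\bigr) \longrightarrow S^{-1}(Sx) = x,
\end{align*}
again by continuity of $S^{-1}$ applied to the convergent sequence $T^{k_n}(Sx) \to Sx$. Thus $S^{-1}TS$ is rigid along the same sequence $(k_n)$. For (iii), assuming $\|T^{k_n}-I\| \to 0$, the identity $(S^{-1}TS)^{k_n} - I = S^{-1}(T^{k_n}-I)S$ gives
\begin{align*}
\|(S^{-1}TS)^{k_n} - I\| \leq \|S^{-1}\|\,\|S\|\,\|T^{k_n}-I\| \longrightarrow 0.
\end{align*}

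There is no genuine obstacle here; the only thing to be slightly careful about is matching witnessing sequences correctly in (i), where the sequence transports unchanged but the vector is shifted by $S^{-1}$, as opposed to (ii)--(iii) where the sequence works uniformly in $x$ and no vector transport is needed.
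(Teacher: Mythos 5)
Your proof is correct, and it is exactly the ``simple proof'' the paper omits: the conjugation identity $(S^{-1}TS)^n = S^{-1}T^nS$ together with continuity of $S$ and $S^{-1}$ handles all three parts, with the recurrent vectors transported by $S^{-1}$ in (i) via Proposition~\ref{p.dense}. Nothing to add.
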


\subsection{Spectral properties of recurrent operators.} In this paragraph we study some spectral properties of recurrent operators. We denote by $\sigma(T)$ the \emph{spectrum} of $T$, $\sigma_p(T)$ the \emph{point-spectrum} of $T$ and by $r(T)$ the \emph{spectral radius} of $T$. In general we will see that the spectral properties of recurrent operators tend to resemble the spectral properties of hypercyclic operators.

\begin{proposition}
	\label{p.spectralradius} Let $T:X\rightarrow X$ be an operator acting on $X$. If $r(T)<1$ then $T$ is not recurrent. 
\end{proposition}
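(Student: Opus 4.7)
The plan is to combine the spectral radius formula with the density of recurrent vectors given by Proposition \ref{p.dense}. Recall Gelfand's formula $r(T) = \lim_{n\to+\infty}\|T^n\|^{1/n}$. Under the hypothesis $r(T) < 1$, I can pick any $\rho$ with $r(T) < \rho < 1$ and conclude that $\|T^n\| \leq \rho^n$ for all sufficiently large $n$, hence $\|T^n\| \to 0$. In particular, $T^n x \to 0$ for every $x \in X$.

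The argument is then by contradiction. Suppose $T$ is recurrent. By Proposition \ref{p.dense}, the set $\Rec(T)$ is dense in $X$, and since we may obviously assume $X \neq \{0\}$, there exists a nonzero recurrent vector $x_0 \in \Rec(T)$. By definition there is a strictly increasing sequence $(k_n)_{n\in\mathbb N}$ with $T^{k_n} x_0 \to x_0$. On the other hand, the observation above forces $T^{k_n} x_0 \to 0$, so $x_0 = 0$, a contradiction.

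There really is no serious obstacle here: the only mild point is ensuring that the recurrent set contains a nonzero vector, which is immediate from density once we exclude the trivial space. The proof is essentially one line once Proposition \ref{p.dense} and the spectral radius formula are invoked in sequence.
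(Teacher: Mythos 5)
Your proof is correct and follows essentially the same route as the paper's: deduce $\|T^n\|\to 0$ from $r(T)<1$, hence $T^nx\to 0$ for all $x$, which is incompatible with the existence of a (nonzero) recurrent vector. You merely spell out the final step via Proposition \ref{p.dense} and the density of $\Rec(T)$, which the paper leaves implicit.
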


\begin{proof}
	Since $r(T)<1$, we have that $\| T^n\| \to 0$ as $n\to +\infty$. Hence $\| T^n x\| \to 0$ for every $x\in X$ and therefore $T$ is not recurrent. 
\end{proof}

\begin{proposition}
	\label{p.decomp} Let $T_1$, $T_2$ be two operators acting on the Banach spaces $X_1$, $X_2$ respectively. If $T_1\oplus T_2$ is recurrent then both $T_1$, $T_2$ are recurrent operators. 
\end{proposition}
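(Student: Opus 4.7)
The plan is to work directly from the definition of recurrence, exploiting the product structure of open sets and iterates in $X_1\oplus X_2$.

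First I would fix a nonempty open set $U_1\subset X_1$ and, to show that $T_1$ is recurrent, try to produce a positive integer $k$ with $U_1\cap T_1^{-k}(U_1)\neq \emptyset$. To invoke the recurrence of $T_1\oplus T_2$, I would lift $U_1$ to a nonempty open set in $X_1\oplus X_2$ by simply pairing it with any nonempty open set $U_2\subset X_2$; since the product topology on $X_1\oplus X_2$ has $U_1\oplus U_2$ as an open set, this is legitimate. By hypothesis there exists $k\in\mathbb N$ and a point $(x_1,x_2)\in (U_1\oplus U_2)\cap (T_1\oplus T_2)^{-k}(U_1\oplus U_2)$. Since $(T_1\oplus T_2)^k=T_1^k\oplus T_2^k$, the coordinate conditions decouple: $x_1\in U_1$ and $T_1^k x_1\in U_1$, so $x_1\in U_1\cap T_1^{-k}(U_1)$, which is therefore nonempty. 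The argument for $T_2$ is symmetric (using any nonempty open $U_1\subset X_1$ to lift $U_2$).

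I do not expect any serious obstacle: the only thing to check is that recurrence of a product set in fact forces recurrence of each coordinate factor, and this is immediate because iteration commutes with the direct sum decomposition and because the product of two nonempty open sets is a nonempty open set in $X_1\oplus X_2$. As an alternative presentation, one could instead invoke Proposition \ref{p.dense}: pick $x_1\in X_1$ and $\epsilon>0$, choose any $x_2\in X_2$, and extract a recurrent vector $(y_1,y_2)$ of $T_1\oplus T_2$ inside the ball $B((x_1,x_2),\epsilon)$; then $(T_1\oplus T_2)^{k_n}(y_1,y_2)\to (y_1,y_2)$ along some strictly increasing $(k_n)$ yields $T_1^{k_n}y_1\to y_1$, so $y_1\in \Rec(T_1)\cap B(x_1,\epsilon)$, proving $\overline{\Rec(T_1)}=X_1$. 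Either route concludes the proof.
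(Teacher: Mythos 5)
Your proof is correct. Your primary route differs from the paper's: the paper argues at the level of recurrent \emph{vectors}, observing that a recurrent vector $x_1\oplus x_2$ for $T_1\oplus T_2$ projects to recurrent vectors $x_1$ for $T_1$ and $x_2$ for $T_2$, and then (implicitly, via Proposition \ref{p.dense}) that the dense set $\Rec(T_1\oplus T_2)$ projects onto dense subsets of $\Rec(T_1)$ and $\Rec(T_2)$. You instead work directly from the open-set definition of recurrence, lifting a nonempty open $U_1\subset X_1$ to $U_1\oplus U_2$ and using $(T_1\oplus T_2)^k=T_1^k\oplus T_2^k$ to decouple the coordinates; this avoids Proposition \ref{p.dense} altogether and is, if anything, slightly more self-contained (and it would go through verbatim for continuous maps on products of topological spaces, with no completeness needed). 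Your alternative presentation at the end is essentially the paper's argument, made more careful: note that the paper's one-line proof, read literally, passes from ``$T_1$ has a recurrent vector'' to ``$T_1$ is recurrent,'' which requires the density of recurrent vectors that you spell out explicitly by locating a recurrent vector of $T_1\oplus T_2$ in an arbitrary ball $B((x_1,x_2),\epsilon)$. Either route is complete; no gaps.
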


\begin{proof}
	Take a recurrent vector $x_1\oplus x_2$ for $T_1\oplus T_2$. It is clear that $x_1$, $x_2$ are recurrent vectors for $T_1$, $T_2$ respectively. The last implies that $T_1$, $T_2$ are recurrent operators. 
\end{proof}

\begin{proposition}
	\label{p.spectrum} If $T:X\to X$ is a recurrent operator then every component of the spectrum of $T$, $\sigma (T)$, intersects the unit circle $ \mathbb T$. 
\end{proposition}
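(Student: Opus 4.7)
The plan is to argue by contradiction, isolating a bad component of the spectrum via the Riesz functional calculus and then invoking Propositions \ref{p.spectralradius}, \ref{p.invertible}, and \ref{p.decomp}. Suppose some component $K$ of $\sigma(T)$ is disjoint from $\mathbb{T}$. Since $K$ is connected, either $K\subset \mathbb D$ or $K\subset \{z\in\mathbb C:|z|>1\}$; by compactness of $K$ we may even assume $K\subset D(0,r)$ or $K\subset \{|z|>r\}$ for some $r$ with $0<r<1$ or $r>1$ respectively.

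The first step is to enlarge $K$ to a \emph{clopen} subset $\sigma_1$ of $\sigma(T)$ lying strictly inside $\mathbb D$ or strictly outside $\overline{\mathbb D}$. This uses the standard fact that in a compact Hausdorff space (here $\sigma(T)$) connected components coincide with quasi-components, so $K$ is the intersection of all clopen subsets of $\sigma(T)$ containing it. Since $\sigma(T)\setminus D(0,r)$ (respectively $\sigma(T)\cap \overline{D(0,r)}$) is compact and disjoint from that intersection, finitely many such clopen sets already intersect trivially with it; their intersection is the desired clopen $\sigma_1$, contained in $\mathbb D$ or in $\{|z|>1\}$. Setting $\sigma_2\coloneqq \sigma(T)\setminus \sigma_1$, both pieces are closed and disjoint in $\mathbb C$.

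Next, apply the Riesz decomposition theorem: the spectral projection
\begin{align*}
P\coloneqq \frac{1}{2\pi i}\int_\gamma (\lambda I-T)^{-1}\,d\lambda,
\end{align*}
where $\gamma$ is a cycle separating $\sigma_1$ from $\sigma_2$, is a bounded idempotent commuting with $T$, and it produces a topological direct sum decomposition $X=X_1\oplus X_2$ into closed $T$-invariant subspaces with $\sigma(T|_{X_i})=\sigma_i$ for $i=1,2$. Writing $T_i\coloneqq T|_{X_i}$ we have $T=T_1\oplus T_2$, and by Proposition \ref{p.decomp} the operator $T_1$ is recurrent.

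Finally, we derive the contradiction from the location of $\sigma_1$. If $\sigma_1\subset \mathbb D$ then $r(T_1)<1$, and Proposition \ref{p.spectralradius} says $T_1$ is not recurrent. If instead $\sigma_1\subset \{|z|>1\}$ then $0\notin \sigma(T_1)$ so $T_1$ is invertible on $X_1$ with $\sigma(T_1^{-1})=\{1/\lambda:\lambda\in\sigma_1\}\subset \mathbb D$; hence $r(T_1^{-1})<1$, so Proposition \ref{p.spectralradius} gives that $T_1^{-1}$ is not recurrent, and Proposition \ref{p.invertible} then contradicts the recurrence of $T_1$. The main technical point is step one, the clopen separation of $K$ from $\mathbb T\cap\sigma(T)$; once this is in place the spectral machinery and the earlier propositions finish everything mechanically.
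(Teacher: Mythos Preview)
Your proof is correct and follows essentially the same approach as the paper's: both argue by contradiction, separate the offending component by a clopen subset of the spectrum, invoke the Riesz decomposition, and then use Propositions~\ref{p.spectralradius}, \ref{p.invertible}, and \ref{p.decomp} to reach a contradiction. The only difference is cosmetic: the paper cites \cite{BM}*{Lemma 1.21} for the clopen separation step, whereas you spell out the standard quasi-component argument directly.
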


\begin{proof}
	We first show the following statement 
	\begin{align}
		\label{e.claim} \text{if}\quad \sigma (T)\subset \{ \lambda : |\lambda | >1 \}\quad \text{then}\ T \ \text{is not recurrent.} 
	\end{align}
	Indeed, the hypothesis on the spectrum implies that $T$ is invertible and that $r(T^{-1})<1$. By Proposition \ref{p.spectralradius} it follows that $T^{-1}$ is not recurrent and by Proposition \ref{p.invertible} we conclude that $T$ is not recurrent.
	
	Let us now prove the full conclusion of the proposition. Arguing by contradiction, assume that some component $C_1$ of the spectrum $\sigma (T)$ does not intersect the unit circle. Then either $C_1\subset \mathbb D$ or $C_1\subset \mathbb C \setminus \overline{\mathbb D}$. By \cite{BM}*{Lemma 1.21} there exists a clopen set $\sigma_1 \subset \sigma (T)$ such that either $C_1\subset \sigma_1 \subset \mathbb D$ or $C_1\subset \sigma_1\subset \mathbb C \setminus \overline{\mathbb D}$. By the Riesz decomposition theorem applied for $\sigma_1$ and $\sigma_2:=\sigma (T) \setminus \sigma_1$ there exist operators $T_1$, $T_2$ and a decomposition of the space $X$, $X=X_1\oplus X_2$ such that $T=T_1\oplus T_2$, $T_i:X_i\to X_i$, $i=1,2$ and $\sigma (T_i)=\sigma_i$, $i=1,2$. Now, Proposition \ref{p.spectralradius} and \eqref{e.claim} imply that $T_1$ is not recurrent and by Proposition \ref{p.decomp} we conclude that $T$ is not recurrent, thus reaching a contradiction. This completes the proof of the proposition. 
\end{proof}

\begin{corollary}
	A compact operator on an infinite dimensional Banach space cannot be recurrent. 
\end{corollary}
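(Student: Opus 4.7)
The plan is to deduce the corollary directly from Proposition \ref{p.spectrum}, by locating a component of $\sigma(T)$ which fails to meet the unit circle $\mathbb{T}$.

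First I would note that, since $X$ is infinite dimensional and $T$ is compact, $T$ is not invertible (a compact operator on an infinite dimensional space has non-closed range, or equivalently is never bounded below on the whole space), so $0 \in \sigma(T)$. Next I would invoke the classical Riesz--Schauder spectral theorem for compact operators: $\sigma(T) \setminus \{0\}$ consists of at most countably many eigenvalues of finite multiplicity, whose only possible accumulation point is $0$. In particular, every nonzero point of $\sigma(T)$ is isolated in $\sigma(T)$.

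From this structural description the connected component of $0$ in $\sigma(T)$ must be exactly $\{0\}$: if $\lambda \in \sigma(T)$ with $\lambda \neq 0$ lay in the same component as $0$, then the component would be a connected set containing both $0$ and the isolated point $\lambda$, which is impossible. Hence $\{0\}$ is itself a connected component of $\sigma(T)$, and obviously $\{0\} \cap \mathbb{T} = \emptyset$. By Proposition \ref{p.spectrum}, $T$ cannot be recurrent.

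There is no real obstacle here; the only subtlety worth spelling out is the justification that the component of $0$ is the singleton $\{0\}$, which uses the fact that in a Hausdorff space an isolated point of a set forms its own connected component when separated from the rest of the spectrum. Everything else is an appeal to previously established results in the paper.
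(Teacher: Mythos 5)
Your proof is correct and is exactly the deduction the paper intends: the corollary is stated immediately after Proposition \ref{p.spectrum} with no proof, and the standard Riesz--Schauder facts ($0\in\sigma(T)$ for a compact operator on an infinite dimensional space, and every nonzero spectral point isolated) show that $\{0\}$ is a component of $\sigma(T)$ missing $\mathbb T$. The only nitpick is the parenthetical claim that a compact operator has non-closed range (false for finite-rank operators); the correct and sufficient justification, which you also give, is that $T$ cannot be bounded below, hence is not invertible.
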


\begin{example}
	\label{ex.compact} It is well known that there exist compact operators $K$ acting on separable Banach spaces $X$ such that $I+K$ is hypercyclic and thus recurrent. It is however not difficult to construct a compact operator $K$ such that $I+K$ is recurrent but not hypercyclic. Indeed, consider the space $\ell^2(\mathbb N)$ and take $K$ to be the operator 
	\begin{align*}
		K(x_1,x_2,\ldots,x_j,\ldots)\coloneqq ((e^{i\theta}-1) x_1,0,\ldots,0,\ldots), 
	\end{align*}
	for some $\theta\in \mathbb R$. Then for every positive integer $n$ we have that 
	\begin{align*}
		(I+K)^n x -x= (e^{in\theta}x_1,x_2,\ldots,x_j,\ldots)-x=( (e^{in \theta}-1)x_1,0,\ldots,0,\ldots). 
	\end{align*}
	Thus $K$ is a compact operator with one-dimensional range and $I+K$ is recurrent. An obvious modification provides a compact operator $K$ with $d$-dimensional range, for any positive integer $d$, such that $I+K$ is recurrent. Of course $I+K$ cannot be hypercyclic in this case.
	
	In a similar fashion one can construct a compact operator $K$ with infinite dimensional range such that $I+K$ is recurrent but not hypercyclic. For this just take a sequence $(\theta_n)_{n\in\mathbb N}$ with $\theta_n\to 0$ as $n\to +\infty$ and define $K:\ell^2(\mathbb N)\to \ell^2(\mathbb N)$ as 
	\begin{align*}
		K(x_1,x_2,\ldots,x_j,\ldots)=((e^{i\theta_1}-1)x_1,(e^{i\theta_2}-1)x_2,\ldots,(e^{i\theta_j}-1)x_j,\ldots). 
	\end{align*}
	Again we have for every positive integer $n$ 
	\begin{align*}
		(I+K)^nx=(e^{in \theta_1}x_1,e^{in \theta_2}x_2,\ldots,e^{in \theta_j}x_j,\ldots). 
	\end{align*}
	Based on the previous identity we can show that $I+K$ is recurrent. Indeed, note that for every positive integer $m$ and every $a=(a_1,\ldots,a_m)\in \mathbb T^m$ there exists a strictly increasing sequence of positive integers $(k_n)_{n\in\mathbb N}$ such that $(a_1 ^{k_n},a_2 ^{k_n},\ldots,a_m ^{k_n})\to (1,1,\ldots,1)$ as $n\to +\infty$. Using the fact that $\theta_n\to 0$ we readily see that $K$ is compact. 
\end{example}

\begin{proposition}
	\label{p.pspectrum} Let $T:X\to X$ be an operator. If $T$ is recurrent then for every $\lambda \in \mathbb{C}\setminus \mathbb T$ the operator $T-\lambda I$ has dense range, hence $\sigma_p(T^*)\subset \mathbb T$. Here $T^*$ denotes the Banach space adjoint of the operator $T$. 
\end{proposition}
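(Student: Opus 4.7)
The plan is to prove the second statement first and deduce the range-density assertion by a Hahn--Banach duality. By the Hahn--Banach theorem, $T-\lambda I$ has dense range if and only if there is no nonzero $x^*\in X^*$ vanishing on the range of $T-\lambda I$; equivalently, if and only if $T^*x^*=\lambda x^*$ admits only the trivial solution, i.e.\ $\lambda\notin\sigma_p(T^*)$. So it suffices to show that every eigenvalue of $T^*$ has modulus one.

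Suppose, toward a contradiction, that $T^*x^*=\lambda x^*$ for some $\lambda\in\mathbb C\setminus\mathbb T$ and some nonzero $x^*\in X^*$. The key identity is that for every $x\in X$ and every positive integer $n$,
\begin{align*}
x^*(T^n x)=(T^{*n}x^*)(x)=\lambda^n\,x^*(x).
\end{align*}
Since $x^*\not\equiv 0$, the open set $\{x\in X:x^*(x)\neq 0\}$ is nonempty. By Proposition~\ref{p.dense} the set $\Rec(T)$ is dense in $X$, so we can choose a recurrent vector $x_0\in\Rec(T)$ with $x^*(x_0)\neq 0$. Let $(k_n)_{n\in\mathbb N}$ be a strictly increasing sequence of positive integers with $T^{k_n}x_0\to x_0$; by continuity of $x^*$ we obtain
\begin{align*}
\lambda^{k_n}\,x^*(x_0)=x^*(T^{k_n}x_0)\longrightarrow x^*(x_0)\neq 0.
\end{align*}

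The final step is to observe that this is impossible. Dividing by $x^*(x_0)\neq 0$ yields $\lambda^{k_n}\to 1$; but since $|\lambda|\neq 1$ we have $|\lambda|^{k_n}\to 0$ (if $|\lambda|<1$) or $|\lambda|^{k_n}\to+\infty$ (if $|\lambda|>1$), contradicting $|\lambda^{k_n}|\to 1$. Hence no such eigenvalue exists, so $\sigma_p(T^*)\subset\mathbb T$ and, by the Hahn--Banach reduction above, $T-\lambda I$ has dense range for every $\lambda\in\mathbb C\setminus\mathbb T$. The only real ``content'' is the density of recurrent vectors, which is already in hand via Proposition~\ref{p.dense}; everything else is the standard duality argument coupled with the spectral growth dichotomy on the scalars $\lambda^{k_n}$.
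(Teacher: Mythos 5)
Your proof is correct and is essentially the same argument as the paper's: both reduce, via Hahn--Banach, to a functional $x^*$ with $T^*x^*=\lambda x^*$, pick a recurrent vector not annihilated by $x^*$, and derive the contradiction $\lambda^{k_n}\to 1$ with $|\lambda|\neq 1$. The only difference is cosmetic ordering (you establish $\sigma_p(T^*)\subset\mathbb T$ first and deduce dense range, while the paper argues from a failure of dense range), so there is nothing substantive to add.
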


\begin{proof}
	Suppose that $\overline{(T-\lambda I)(X)}\neq X$ for some $\lambda \in \mathbb{C}\setminus \mathbb T$. Since $T$ is recurrent and the set $X\setminus \overline{(T-\lambda I)(X)}$ is non-empty and open there exists a non-zero vector $x\in X$ such that $x\in \Rec(T)\cap X\setminus \overline{(T-\lambda I)(X)}$. By the Hahn-Banach theorem there exists $x^*\in X^*$ such that $x^*(x)\neq 0$ and $x^*(\overline{(T-\lambda I)(X)})=\{ 0\}$. Then for every $y\in X$ we have $x^*(Ty)=\lambda x^*(y)$ and thus $x^*(T^ny)={\lambda}^nx^*(y)$ for every $n=1,2,\ldots $. Since $x\in \Rec(T)$ there exists a sequence of positive integers $(k_n)_{n\in\mathbb N}$ such that $k_n\to +\infty $ and $T^{k_n}x\to x$. Hence ${\lambda }^{k_n}x^*(x)=x^*(T^{k_n}x)\to x^*(x)$. Using that $x^*(x)\neq 0$ we conclude that ${\lambda }^{k_n}\to 1$, which is a contradiction since $\lambda \in \mathbb{C}\setminus \mathbb T$. This completes the proof. 
\end{proof}

\begin{remark}
	If $T$ is hypercyclic then $T$ is recurrent but $\sigma_p(T^*)=\emptyset$. However, there exist several recurrent operators such that $\emptyset \neq \sigma_p(T^*)\subset \mathbb T$. For example, this is the case for the operator $I+K$ constructed in Example \ref{ex.compact} as well as for unimodular multiples of the identity operator. 
\end{remark}

The following lemma contains the classical fact that a sufficiently large supply of eigenvectors corresponding to unimodular eigenvalues implies that the operator is recurrent. 

\begin{lemma}
	\label{l.suffrecurrent} Let $T:X\to X$ be an operator. If $T$ has \emph{discrete spectrum}, that is, if 
	\begin{align}
		\label{e.unimod} \overline{ \textnormal{span} \{ x\in X:Tx=\lambda x \ \text{for some}\ \lambda \in {\mathbb T} \} }=X 
	\end{align}
	then $T$ is recurrent. 
\end{lemma}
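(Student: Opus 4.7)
By Proposition~\ref{p.dense}, it suffices to show that $\Rec(T)$ contains a dense set; in view of the hypothesis \eqref{e.unimod}, it is enough to prove that every finite linear combination of unimodular eigenvectors is a recurrent vector for $T$.

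So fix $x = \sum_{j=1}^m c_j x_j$ with $T x_j = \lambda_j x_j$ and $|\lambda_j|=1$ for each $j$. Then
\begin{align*}
 T^k x = \sum_{j=1}^m c_j \lambda_j^k x_j,
\end{align*}
and the task reduces to producing a strictly increasing sequence $(k_n)_{n\in\mathbb N}$ of positive integers along which $\lambda_j^{k_n}\to 1$ \emph{simultaneously} for every $j=1,\dots,m$.

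The plan for this step is to exploit compactness of $\mathbb T^m$. Consider the semigroup
\begin{align*}
 S \coloneqq \bigl\{(\lambda_1^k,\dots,\lambda_m^k) : k\in\mathbb N\bigr\}\subset \mathbb T^m,
\end{align*}
and its closure $\overline{S}$. A standard fact is that any closed subsemigroup of a compact (Hausdorff) group is in fact a subgroup: if $s\in\overline{S}$ one picks, by compactness, a subsequence $s^{n_k}$ converging to some $g$, and then $s^{n_{k+1}-n_k}\to e$, placing the identity in $\overline{S}$, and similarly yielding inverses. Applied coordinatewise this gives $(1,\dots,1)\in\overline{S}$, so there is a sequence of positive integers $k_n$ (which can be taken strictly increasing by a diagonal extraction, since the approximants are never eventually constant unless $\lambda_j^{k_0}=1$ for all $j$ and some $k_0$, a case which is trivial) such that $\lambda_j^{k_n}\to 1$ for each $j$. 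Consequently $T^{k_n}x\to x$, so $x\in\Rec(T)$.

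The one subtle point — and really the only non-formal step — is the simultaneous approximation $\lambda_j^{k_n}\to 1$ for all $j$; everything else is bookkeeping. I would handle this via the closed-subsemigroup-is-subgroup argument sketched above, which is in the same spirit as the semigroup argument already used in the proof of Proposition~\ref{p.invariances}(i), where a closed sub-semigroup $F$ of $\mathbb T$ was shown to contain $1$. The present situation is just the $m$-dimensional analogue.
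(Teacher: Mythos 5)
Your proposal is correct and follows essentially the same route as the paper: reduce to showing that every finite linear combination of unimodular eigenvectors lies in $\Rec(T)$, which amounts to finding a single strictly increasing sequence $(k_n)$ with $\lambda_j^{k_n}\to 1$ simultaneously for $j=1,\dots,m$, and then invoke Proposition~\ref{p.dense}. The only difference is that the paper simply asserts the simultaneous return of finitely many unimodular numbers to $1$ (stating it for two eigenvectors), whereas you supply a proof of it via the closed-subsemigroup-of-a-compact-group argument in $\mathbb T^m$, together with the (correctly handled) extraction of a strictly increasing sequence; this fills in a detail the paper leaves implicit.
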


\begin{proof}
	Take $x,y\in X$ such that $Tx=\mu_1 x$, $Ty=\mu_2 y$ for some $\mu_1,\mu_2 \in {\mathbb T}$ and fix any $\lambda_1 ,\lambda_2\in {\mathbb C}$. Since $\mu_1 ^{k_n}\to 1$ and $\mu_2 ^{k_n}\to 1$ for some strictly increasing sequence of positive integers $(k_n)_{n\in\mathbb N}	$ we get that $\lambda_1x+\lambda_2y\in \Rec(T)$. The last implies that 
	\begin{align*}
		\textnormal{span}\{ x\in X: Tx=\lambda x \ \text{for some} \ \lambda \in {\mathbb T} \} \subset \Rec(T) 
	\end{align*}
	and by our hypothesis we conclude that $T$ is recurrent. 
\end{proof}

\begin{remark}A few remarks are in order.
 
\begin{itemize}
  \item [(i)] Observe that Lemma \ref{l.suffrecurrent} holds even in the case that the space $X$ is non-separable. A much stronger form of the hypothesis \eqref{e.unimod} is the assumption that an operator $T$, acting on \emph{separable} Banach space $X$, has a perfectly spanning set of eigenvectors associated to unimodular eigenvalues. This means that there exists a continuous probability measure $\sigma$ on $\mathbb T$ such that, for every Borel $A\subset \mathbb T$ with $\sigma(A)=1$ we have 
\begin{align*}
	\overline{ \textnormal{span} \{ x\in X:Tx=\lambda x \ \text{for some}\ \lambda \in A \}}=X. 
\end{align*}
In this case we get the stronger conclusion that $T$ is hypercyclic. In fact, in this case, $T$ is \emph{frequently hypercyclic}; see \cites{Griv,Griv1,BaGr2,BaGr3}. The point of Lemma \ref{l.suffrecurrent} is that if we only assume the weaker hypothesis \eqref{e.unimod} we can still conclude that $T$ is recurrent.

\item[(ii)] On a similar spirit, if besides \eqref{e.unimod} we further assume that $T:X\to X$ is power bounded, i.e. $\sup_{n\in\mathbb N}\|T^n\|<+\infty$, and $X$ is \emph{separable}, then $T$ is rigid; see \cite{EIS}. However, \eqref{e.unimod} is far from being a necessary condition for rigidity since there exist rigid unitary operators whose point spectrum is empty and so \eqref{e.unimod} fails for these operators. For such examples see \cite{EIS} and the references therein.

\item[(iii)] The assumption \eqref{e.unimod} alone does not suffice in order to conclude that $T$ is rigid. To see this consider any hereditarily hypercyclic operator which has a dense set of \emph{periodic points}, namely, points $x\in X$ for which there exists a positive integer $n$ with $T^nx=x$. One such example is provided by the operator $\lambda B$ on the space of square summable sequences, where $B$ is the unweighted unilateral backward shift and $|\lambda|>1$. Observe that the operator above is hypercyclic and has a dense set of periodic points, that is, it is \emph{chaotic}.

\item[(iv)] In the case of complex separable Hilbert spaces $H$, condition \eqref{e.unimod} appears in \cite{Flytz}, and is shown to be equivalent to the existence of an invariant Borel probability measure of square integrable norm. For the precise definitions see \cite{Flytz}. We only note here that this class of Borel probability measures contains the interesting class of Gaussian measures.   
\end{itemize}
\end{remark}

\subsection{Spectral properties of rigid and uniformly rigid operators} We already saw that every component of the spectrum of a recurrent operator meets the unit circle. If an operator is rigid then we also get that the spectrum must be contained in the closed unit disk of the complex plane.
\begin{proposition}\label{p.rigspec} Let $T$ be a rigid operator acting on a Banach space $X$. Then every component of the spectrum of $T$ intersects the unit circle. Furthermore we have that $\sigma(T)\subseteq \overline{\mathbb D}$.
\end{proposition}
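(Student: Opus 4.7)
The plan is to deduce both statements fairly quickly from the definition of rigidity together with earlier results from the excerpt. Since $T$ is rigid there is a strictly increasing sequence $(k_n)_{n\in\mathbb N}$ with $T^{k_n}\to I$ in SOT, and in particular $T^{k_n}x\to x$ for every $x\in X$. This already tells us that every $x\in X$ is a recurrent vector, so $\Rec(T)=X$ is dense, and Proposition \ref{p.dense} implies that $T$ is recurrent. Applying Proposition \ref{p.spectrum} then immediately yields the first assertion: every component of $\sigma(T)$ meets $\mathbb T$.

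For the second assertion, I would estimate the spectral radius of $T$ using the rigidifying subsequence. Since $T^{k_n}\to I$ pointwise, the uniform boundedness principle gives
\begin{align*}
	M\coloneqq \sup_{n\in\mathbb N}\|T^{k_n}\|<+\infty.
\end{align*}
By Gelfand's formula,
\begin{align*}
	r(T)=\lim_{n\to+\infty}\|T^n\|^{1/n}=\liminf_{n\to+\infty}\|T^{k_n}\|^{1/k_n}\leq \liminf_{n\to +\infty} M^{1/k_n}=1,
\end{align*}
since $k_n\to +\infty$. Thus $\sigma(T)\subseteq \overline{\mathbb D}$, as desired.

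There is essentially no obstacle in this proof: the first part is a direct citation of the already-established Proposition \ref{p.spectrum} once one notes that rigidity implies recurrence, and the second part only uses the uniform boundedness principle together with the spectral radius formula applied along the subsequence $(k_n)$. The one small point one has to be careful about is to pass from $T^{k_n}\to I$ (SOT) to the uniform bound $\sup_n\|T^{k_n}\|<+\infty$, which requires invoking UBP rather than relying on the SOT convergence alone.
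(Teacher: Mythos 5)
Your argument is correct. The first assertion is handled exactly as in the paper: rigidity gives $\Rec(T)=X$, hence $T$ is recurrent, and Proposition \ref{p.spectrum} applies. For the containment $\sigma(T)\subseteq\overline{\mathbb D}$ you take a genuinely different route. The paper argues by contradiction using an external result of M\"uller (if $r(T)>1$ then some non-zero vector $y$ satisfies $\|T^ny\|\to+\infty$, which is incompatible with $\|T^{k_n}y\|\to\|y\|$), whereas you derive the bound directly from the uniform boundedness principle and Gelfand's formula: since $\lim_n\|T^n\|^{1/n}$ exists, it may be computed along the subsequence $(k_n)$, and $\|T^{k_n}\|^{1/k_n}\le M^{1/k_n}\to 1$. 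Your version is more elementary and self-contained, at the cost of only giving $r(T)\le 1$ rather than the paper's $r(T)=1$ (though equality follows anyway from the first assertion, since the spectrum is non-empty and meets $\mathbb T$). One minor point worth making explicit: the reduction to $k_n\to+\infty$ is justified by the remark after the definitions (if $k_n\not\to+\infty$ then $T^{k_0}=I$ for some $k_0$, and then $\sigma(T)$ consists of roots of unity, so the conclusion is immediate); as written you assert $k_n\to+\infty$ without comment, but this is harmless.
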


\begin{proof} If $T$ is rigid then it is recurrent so Proposition \ref{p.spectrum} gives the first assertion of the proposition. We also claim that $r(T)=1$. Indeed, if $r(T)>1$ then follows by \cite{Muller}*{Corollary 1.2} that there exists a non-zero vector $y\in X$ such that $\|T^n y\|\to +\infty$ as $n\to +\infty$. On the other hand, since $T$ is rigid there exists a strictly increasing sequence of positive integers $(k_n)_{n\in\mathbb N}$ such that $T^{k_n}x\to x$ for every $x\in X$. Thus we  should also have that $\|T^{k_n}y\|\to \|y\|$, a contradiction.
\end{proof}

For uniformly rigid operators we have a significant strengthening of the previous statement.

\begin{proposition} Let $T$ be a uniformly rigid operator acting on a Banach space $X$. Then the spectrum of $T$ is contained in the unit circle. In particular, if $T$ is uniformly rigid then $T$ is invertible.
\end{proposition}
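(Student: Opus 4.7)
The plan is to exploit the norm convergence $\|T^{k_n}-I\|\to 0$ supplied by uniform rigidity, which is considerably stronger than the SOT convergence used in the rigid case. I would aim directly at showing $\sigma(T)\subseteq\mathbb T$; the invertibility of $T$ then follows for free, since $0\notin\mathbb T$.

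Fix any $\lambda\in\sigma(T)$. By the polynomial spectral mapping theorem, $\lambda^{k_n}\in\sigma(T^{k_n})$ for every $n$, so $T^{k_n}-\lambda^{k_n}I$ is not invertible. The key is the trivial decomposition
\[
T^{k_n}-\lambda^{k_n}I \;=\; (1-\lambda^{k_n})I + (T^{k_n}-I).
\]
If $\lambda^{k_n}\neq 1$ and $\|T^{k_n}-I\|<|1-\lambda^{k_n}|$, then the Neumann series applied to $I+(1-\lambda^{k_n})^{-1}(T^{k_n}-I)$ would force the right-hand side to be invertible, a contradiction. Hence
\[
|1-\lambda^{k_n}|\;\leq\;\|T^{k_n}-I\|,
\]
an inequality which also holds trivially when $\lambda^{k_n}=1$. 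Letting $n\to+\infty$ yields $\lambda^{k_n}\to 1$.

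From this I would rule out $|\lambda|\neq 1$: if $|\lambda|<1$ then $|\lambda^{k_n}|=|\lambda|^{k_n}\to 0$ and so $|1-\lambda^{k_n}|\to 1$; if $|\lambda|>1$ then $|\lambda^{k_n}|\to+\infty$. In either case $\lambda^{k_n}\not\to 1$, a contradiction. Therefore $|\lambda|=1$, so $\sigma(T)\subseteq\mathbb T$, and in particular $T$ is invertible.

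I do not anticipate a serious obstacle here; the whole argument is a routine consequence of the Neumann-series criterion for invertibility, applied to the decomposition of $T^{k_n}-\lambda^{k_n}I$ above. An alternative route would be to invoke upper semicontinuity of the spectrum of a norm-continuous family of operators, using $T^{k_n}\to I$ and $\sigma(I)=\{1\}$, but the direct estimate above is self-contained and slightly cleaner.
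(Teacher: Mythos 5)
Your argument is correct and complete, but it takes a genuinely different and noticeably more economical route than the paper. The paper first invokes Proposition \ref{p.rigspec} to place $\sigma(T)$ inside $\overline{\mathbb D}$, then rules out the eigenvalues of $T$ and of $T^*$ in $\mathbb D$ so that any $\lambda\in\sigma(T)\cap\mathbb D$ must lie in the approximate point spectrum, passes to the powers $T^p$ (all uniformly rigid by Proposition \ref{p.rigidinv}) to get approximate eigenvectors for each $\lambda^p$, and finally diagonalizes to produce unit vectors $y_n$ with $\|T^ny_n-\lambda^ny_n\|<1/n$, contradicting $\|T^{k_n}y_{k_n}\|\to 1$. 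Your proof bypasses all of this: the single observation that $\lambda^{k_n}\in\sigma(T^{k_n})$ forces $|1-\lambda^{k_n}|\le r(T^{k_n}-I)\le\|T^{k_n}-I\|$ (your Neumann-series step is exactly the spectral radius bound for $T^{k_n}-I$) kills both $|\lambda|<1$ and $|\lambda|>1$ at once, with no need for the approximate point spectrum, the rigidity of powers, or the preliminary inclusion $\sigma(T)\subseteq\overline{\mathbb D}$. The one hypothesis you use implicitly is that $k_n\to+\infty$, which is needed for $|\lambda|^{k_n}\to 0$ or $+\infty$; the paper's remark following the definitions justifies this (and in the degenerate case $T^{k_0}=I$ the conclusion $\sigma(T)\subseteq\mathbb T$ is immediate from spectral mapping anyway). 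What the paper's longer route buys is some side information about the point and approximate point spectra of uniformly rigid operators, but none of it is needed for the stated conclusion; your proof is the cleaner one.
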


\begin{proof} Let $T$ be a uniformly rigid operator and suppose that $(k_n)_{n\in\mathbb N}$ is a strictly increasing sequence of positive integers such that $\|T^{k_n}-I\|\to 0$ as $n\to+\infty$.  Without loss of generality we can assume that $k_n\to +\infty$.  In particular we have that $\sup_{n\in\mathbb N} \|T^{k_n}\|<+\infty$. By Proposition \ref{p.rigspec} we have that $\sigma(T)\subset \overline{\mathbb D}$. Since $T$ is uniformly rigid it is immediate that $\sigma_p(T)\cap \mathbb D=\emptyset$ and by Proposition \ref{p.pspectrum} we also have that $\sigma_p(T^*)\cap \mathbb D=\emptyset$. Thus if $\lambda \in \sigma(T)\cap \mathbb D$ then $\lambda$ is necessarily in the approximate point spectrum of $T$.

Let $\lambda \in \sigma(T)\cap \mathbb D$. By Proposition \ref{p.rigidinv} we have that $T^p$ is uniformly rigid for any positive integer $p$. By the spectral theorem $\lambda^p \in \sigma(T^p)$ and by the previous discussion $\lambda^p$ is necessarily an approximate eigenvalue of $T^p$. This means that, for every positive integer $p$, there exists a sequence $(x_n ^{(p)})_{n\in\mathbb N}$ with $\| x_n ^{(p)}\|=1$ such that 
$\|T^p x_n ^{(p)}-\lambda^p x_n ^{(p)}\|\to 0$ as $n\to+\infty$. Using this we can construct a sequence $(y_n)_{n\in\mathbb N}\subset X$ with $\|y_n\|=1$ for every $n\in\mathbb N$, such that $\|T^n y_n -\lambda^ny_n\| <\frac{1}{n}$ for every integer $n\geq 1$. This immediately implies that
\begin{align*}
\|T^n y_n\|\leq \frac{1}{n}+|\lambda|^n \|y_n\|\leq \frac{1}{n}+|\lambda|^n  \to 0 \quad \text{as} \quad n\to +\infty.
\end{align*}
On the other hand, since $T$ is uniformly rigid along the sequence $(k_n)_{n\in\mathbb N}$ we have that
\begin{align*}
 \Abs{\|T^{k_n} y_{k_n}\|-1}	 \leq \|T^{k_n}y_{k_n}-y_{k_n}\|\leq \|T^{k_n}-I\|  \to 0\quad \text{as}\quad n\to+\infty,
\end{align*}
which is clearly a contradiction. Thus $\sigma(T)\subseteq \mathbb T$.
\end{proof}

\begin{question} We have already seen that if an operator $T$ is invertible then $T$ is recurrent if and only if $T^{-1}$ is recurrent. On the other hand, for uniformly rigid operators we get that $T$ is automatically invertible. However, it is not clear whether $T^{-1}$ is also uniformly rigid without some additional information on $T$. See also Proposition \ref{p.powerigid}. It is natural to ask if the same property is shared by rigid operators, namely, whether every rigid operator is invertible. Failing that, is it true that if $T$ is rigid and invertible then $T^{-1}$ is also rigid? Note that both questions above have affirmative answers in all the examples of rigid and uniformly rigid operators appearing in this paper.
\end{question}

\section{Power bounded operators}\label{s.power} Recall that an operator $T:X\to X$ is called power bounded provided there exists a positive number $M$ such that $\| T^n \| \leq M$ for every positive integer $n$. The main purpose of this section is to show that power bounded recurrent operators are similar to surjective isometries. This is contained in Proposition \ref{p.bimpliesunit} below. In view of Lemma \ref{l.equiv} this means that the study of power bounded operator recurrent operators reduces to the study of recurrent surjective isometries.

We start with a simple lemma. 

\begin{lemma}
	\label{l.rec-closed} If $T:X\to X$ is a power bounded operator then the set $\Rec(T)$ is closed. 
\end{lemma}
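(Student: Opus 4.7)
The plan is to use the power boundedness as a uniform Lipschitz control on the dynamics, and then run a straightforward approximation argument.

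First I would fix $M\coloneqq \sup_{n\in\mathbb N}\|T^n\|<+\infty$ and take a sequence $(x_n)_{n\in\mathbb N}\subset \Rec(T)$ converging to some $x\in X$; the goal is to produce a strictly increasing sequence of positive integers $(k_j)_{j\in\mathbb N}$ with $T^{k_j}x\to x$. The key inequality, obtained from a three-term triangle estimate, is
\begin{align*}
  \|T^kx-x\| \leq \|T^k\|\,\|x-x_n\| + \|T^kx_n-x_n\| + \|x_n-x\| \leq (M+1)\|x-x_n\| + \|T^kx_n-x_n\|,
\end{align*}
valid for every $n,k\in\mathbb N$. The power boundedness is used precisely to absorb the $\|T^k\|$ factor into the constant $M+1$ uniformly in $k$.

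Next, given $\epsilon>0$, I would first pick $n$ so large that $(M+1)\|x-x_n\|<\epsilon/2$. Since $x_n\in\Rec(T)$, there exist arbitrarily large positive integers $k$ with $\|T^kx_n-x_n\|<\epsilon/2$, and combining these two facts with the displayed inequality yields $\|T^kx-x\|<\epsilon$ for suitable $k$. The only point to be careful about is that we need a \emph{strictly increasing} sequence of such $k$'s; this is arranged by a standard diagonal-style construction: at stage $j$, pick $n_j$ with $(M+1)\|x-x_{n_j}\|<1/(2j)$ and then, using the recurrence of $x_{n_j}$ along a sequence tending to $+\infty$, select $k_j>k_{j-1}$ with $\|T^{k_j}x_{n_j}-x_{n_j}\|<1/(2j)$. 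This forces $\|T^{k_j}x-x\|<1/j$, hence $T^{k_j}x\to x$, so $x\in\Rec(T)$.

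There is no real obstacle: the whole argument is a routine approximation, and the only substantive ingredient is that power boundedness converts closeness of $x_n$ to $x$ into uniform closeness of their entire orbits, which is exactly what lets recurrence pass to limits. Without power boundedness the orbit of $x$ could be a wild perturbation of the orbits of $x_n$ and the conclusion would fail.
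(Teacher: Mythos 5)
Your proof is correct and follows essentially the same route as the paper's: extract a strictly increasing sequence of iterates along which the approximants $x_{n_j}$ nearly return to themselves, and use power boundedness in a three-term triangle inequality to transfer this to $x$. You merely spell out the "routine to check" step that the paper leaves to the reader; no gap.
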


\begin{proof}
	Let $(x_n)_{n\in\mathbb N}\subset \Rec(T)$ and $x\in X$ and suppose that $x_n\to x$ in $X$. Since $(x_n)_{n\in\mathbb N}\subset \Rec(T)$ we can choose a strictly increasing subsequence of positive integers $(m_n)_{n\in\mathbb N}$ such that $\lim_{n\to +\infty}\|T^{m_n}x_n-x_n\|=0$. Using the hypothesis that $T$ is power bounded it is routine to check that $T^{m_n}x\to x$ in $X$ thus $x\in \Rec(T)$. This shows that $\Rec(T)$ is a closed set. 
\end{proof}
We continue with our main result for this section.
\begin{proposition}
	\label{p.bimpliesunit} Let $T:X\to X$ be an operator. 
	\begin{itemize}
		\item[(i)] If $\| T\| \leq 1$ and $T$ is recurrent then $T$ is a surjective isometry. 
		\item[(ii)] If $T$ is power bounded and recurrent then $\sigma (T)\subset\mathbb T$ and $T^{-1}$ is power bounded and recurrent. In particular, $T$ is similar to an invertible isometry. 
		\item[(iii)] If $T$ is power bounded, recurrent and $\sigma (T)\cap \mathbb T= \{ \lambda\}$ for some $\lambda \in \mathbb T$ then $T=\lambda\, I$. 
		\item[(iv)] If $T$ is power bounded and recurrent then $T$ is not supercyclic. 
		\item[(v)] If $T$ is power bounded and recurrent then $\Rec(T)=X$. 
	\end{itemize}
\end{proposition}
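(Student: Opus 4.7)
The five parts build on one another, so my plan is to handle them in the order (v), (i), (ii), (iii), (iv), so that each one is available when needed for the next.

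Part (v) is immediate: since $T$ is recurrent, Proposition~\ref{p.dense} gives $\overline{\Rec(T)} = X$, while power boundedness combined with Lemma~\ref{l.rec-closed} gives that $\Rec(T)$ is closed; hence $\Rec(T) = X$. For (i), the hypothesis $\|T\| \le 1$ puts us in the power-bounded setting, so (v) yields $\Rec(T) = X$. For any $x$ and any sequence with $T^{k_n}x \to x$, the contractivity of $T$ implies $\|T^{k_n}x\| \le \|Tx\|$ whenever $k_n \ge 1$; passing to the limit gives $\|x\| \le \|Tx\| \le \|x\|$. Thus $T$ is an isometry; its range is then closed, and since $T^{k_n}x \in T(X)$ converges to $x$, we get $x \in T(X)$ for every $x$, i.e.\ $T$ is surjective.

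For (ii) and (iii), the key device is the standard renorming trick. Set $|||x||| := \sup_n \|T^n x\|$, which by power boundedness is equivalent to $\|\cdot\|$ and satisfies $|||T||| \le 1$. Recurrence is topological, so $T$ is still recurrent on $(X,|||\cdot|||)$, and (i) applied there yields that $T$ is a surjective $|||\cdot|||$-isometry. Invertibility is intrinsic to the operator, and the identity $|||T^{-n}||| = 1$ transfers back to $T^{-1}$ being $\|\cdot\|$-power bounded; hence $r(T), r(T^{-1}) \le 1$ and $\sigma(T) \subset \mathbb T$. Recurrence of $T^{-1}$ is Proposition~\ref{p.invertible}, and the similarity to an invertible isometry is realized by the identity viewed as an isomorphism $(X,\|\cdot\|) \to (X,|||\cdot|||)$; this proves (ii). For (iii), part (ii) reduces the spectral hypothesis to $\sigma(T) = \{\lambda\}$; replacing $T$ with $\bar\lambda T$ (still power bounded and recurrent by Proposition~\ref{p.invariances}) we may assume $\sigma(T) = \{1\}$. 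Combined with the double power boundedness afforded by (ii), Gelfand's classical theorem on operators with spectrum $\{1\}$ forces $T = I$, so the original $T$ equals $\lambda I$.

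Finally, for (iv), part (ii) furnishes a uniform lower bound $\|T^n x\| \ge c\|x\|$ with $c > 0$, coming from the similarity to an invertible isometry. On the other hand, the theorem of Ansari--Bourdon asserts that any power-bounded supercyclic operator on a Banach space must satisfy $T^n \to 0$ strongly; combining these forces every supercyclic vector to be zero, which is absurd. The main obstacle I anticipate is precisely this last step: Ansari--Bourdon is invoked as an external input of non-trivial depth. A self-contained alternative would exploit the isometric structure of $T$ in the equivalent norm to show that a supercyclic vector would produce an orbit dense modulo unimodular rotations on a norm sphere, and then rule this out using $\sigma_p(T^*)\subset\mathbb T$ from Proposition~\ref{p.pspectrum} together with the classical fact that supercyclicity restricts $\sigma_p(T^*)$ to cardinality at most one.
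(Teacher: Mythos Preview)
Your proof is correct and follows the same overall architecture as the paper: the order (v)$\to$(i)$\to$(ii)$\to$(iii)$\to$(iv), the renorming $|||x|||=\sup_n\|T^nx\|$ for (ii), and the appeal to Ansari--Bourdon for (iv) are all exactly what the paper does. Your arguments for (i) are slightly more streamlined than the paper's (you deduce $\|x\|\le\|Tx\|$ in one step from $\|T^{k_n}x\|\le\|Tx\|$, and get surjectivity from closedness of the range of an isometry, whereas the paper shows $(\|T^nx\|)_n$ is monotone and then checks that $(T^{k_n-1}x)_n$ is Cauchy).

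The one genuinely different step is (iii). You reduce to $\sigma(T)=\{1\}$ and invoke Gelfand's theorem, using the double power boundedness supplied by (ii). The paper instead applies the Katznelson--Tzafriri theorem directly to the power bounded operator $T/\lambda$, obtaining $\|T^n(T-\lambda I)\|\to 0$, and then combines this with $T^{k_n}x\to x$ to conclude $(T-\lambda I)x=0$ for every $x$. Your route is arguably more elementary (Gelfand's theorem is older and easier), but it needs the extra input from (ii) that $T^{-1}$ is power bounded; the paper's route uses only power boundedness of $T$ and keeps the recurrence hypothesis visibly at work in the final step.
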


\begin{proof}
	Assertion (v) follows from Lemma \ref{l.rec-closed} and the fact that $T$ is recurrent. In order to show (i) we fix some $x\in X$. By (v) we have that $x\in\Rec(T)$ thus there exists a strictly increasing sequence of positive integers $(k_n)_{n\in\mathbb N}$ such that $T^{k_n}x\to x$. The assumption that $T$ is a contraction implies that $\| T^{n+1}x\|\leq \| T^nx\|$ for every $n$. Since $\| T^{k_n}x\| \to \| x\|$ it follows that $\| T^nx\| \to \| x\|$. From the last we conclude that $\| Tx\|=\| x\|$. It remains to show that $T$ is surjective. Observe that it is enough to prove the convergence of the sequence $(T^{k_n-1}x)_{n\in\mathbb N}$. We have $\| T^{k_n-1}x - T^{k_l-1}x\|=\| T^{k_n}x - T^{k_l}x\|\to 0$ as $n,l\to +\infty$. Therefore $( T^{k_n-1}x)_{n\in\mathbb N}$ is a Cauchy sequence. This completes the proof of (i).
	
	We proceed with the proof of (ii). Define the equivalent norm $\| x \|_1 =\sup_{n\geq 0} \| T^nx\|$, $x\in X$. Then $T$ is a contraction and recurrent operator on the Banach space $(X,\| \cdot \|_1)$. By (i) it follows that $T$ is a surjective isometry on $(X,\| \cdot \|_1)$, hence $\sigma (T)\subset \mathbb T$. Therefore $T$ is invertible on $(X,\| \cdot \|_1)$. It is clear now that $T$ is also invertible on $(X, \| \cdot \| )$. We have $ \| T^{-n}x \| \leq {\| T^{-n}x\|}_1=\| x\|_1$ for every $ n\geq 0 $ and every $x\in X$. Thus $T^{-1}: (X,\| \cdot \| ) \to (X,\| \cdot \| )$ is power bounded and by \cite{DriMbe}*{Lemma 9} we conclude that $T: (X,\| \cdot \| ) \to (X,\| \cdot \| )$ is similar to an invertible isometry. The proof of (ii) is complete.
	
	Let us now prove (iii). We have $\sigma (\frac{T}{\lambda})\cap \mathbb T= \{ 1\}$. The theorem of Katznelson and Tzafriri \cite{KaTza}, gives 
	\begin{align*}
		\lim_{n\to\infty} \NORm \frac{T^{n+1}}{\lambda^{n+1}}-\frac{T^n}{\lambda^n} ..=0, 
	\end{align*}
	or equivalently 
	\begin{align*}
		\lim_{n\to\infty} \| T^n(T-\lambda\, I) \| =0. 
	\end{align*}
	Take a non-zero vector $x\in X$. There exists a strictly increasing sequence of positive integers $(k_n)_{n\in\mathbb N}$  such that $T^{k_n}x\to x$, hence $T^{k_n}(T-\lambda\, I)x\to (T-\lambda\, I)x$. It is now clear that $Tx=\lambda x$.
	
	 Ansari and Bourdon have showed in \cite{AB} that, if $T$ is power bounded and supercyclic, then it is stable, that is, $\| T^n x \| \to 0$ for every $x\in X$. Assertion (iv) follows.
	\end{proof}

\begin{remark} As it is observed in \cite{EIS}*{Remark 2.2}, rigid contractions on Hilbert spaces are necessarily unitary. Here we show the stronger statement that \emph{recurrent contractions} on Banach spaces are surjective isometries and, more generally, recurrent power bounded operators on complex Banach spaces are similar to invertible isometries.
\end{remark}

We close this section by an easy remark on rigid and uniformly rigid power bounded operators.

\begin{proposition}\label{p.powerigid} Let $T$ be an operator acting on a Banach space $X$. Then $T$ is power bounded and (uniformly) rigid if and only if $T^{-1}$ is power bounded and (uniformly) rigid.
\end{proposition}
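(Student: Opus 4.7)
The plan is to reduce the whole statement to a one-line estimate, once we have invoked Proposition \ref{p.bimpliesunit}(ii) to secure invertibility and the power-boundedness of $T^{-1}$. By symmetry it is enough to prove only one direction of the equivalence, namely that if $T$ is power bounded and rigid (resp.\ uniformly rigid), then $T$ is invertible and $T^{-1}$ is power bounded and rigid (resp.\ uniformly rigid); applying this to $T^{-1}$ then gives the reverse implication since $(T^{-1})^{-1}=T$.

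So I would assume $T$ is power bounded and rigid, and pick a strictly increasing sequence of positive integers $(k_n)_{n\in\mathbb N}$ with $T^{k_n}x\to x$ for every $x\in X$ (in the uniformly rigid case, $\|T^{k_n}-I\|\to 0$). Since rigidity trivially implies recurrence (every $x\in X$ is a recurrent vector), Proposition \ref{p.bimpliesunit}(ii) applies: $T$ is invertible, $\sigma(T)\subseteq\mathbb T$, and $T^{-1}$ is power bounded. Let
\[
M\coloneqq \sup_{n\in\mathbb N}\|T^{-n}\|<+\infty.
\]

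With this at hand the rigidity of $T^{-1}$ along the same sequence $(k_n)_{n\in\mathbb N}$ is immediate. Indeed, writing $x-T^{-k_n}x=T^{-k_n}(T^{k_n}x-x)$, we get
\[
\|T^{-k_n}x-x\|\le \|T^{-k_n}\|\,\|T^{k_n}x-x\|\le M\,\|T^{k_n}x-x\|\longrightarrow 0,
\]
for every $x\in X$, so $(T^{-1})^{k_n}=T^{-k_n}\to I$ (SOT). In the uniformly rigid case the very same identity yields
\[
\|T^{-k_n}-I\|=\|T^{-k_n}(T^{k_n}-I)\|\le M\,\|T^{k_n}-I\|\longrightarrow 0,
\]
so $T^{-1}$ is uniformly rigid along $(k_n)_{n\in\mathbb N}$.

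The only substantial step in the argument is obtaining power-boundedness of $T^{-1}$, which is precisely where Proposition \ref{p.bimpliesunit}(ii) enters. Once the uniform bound $M$ is available, the transfer of rigidity from $T$ to $T^{-1}$ is a pure one-line computation, and the converse direction is free by the symmetry noted at the beginning.
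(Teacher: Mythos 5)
Your proof is correct and follows essentially the same route as the paper: invoke Proposition \ref{p.bimpliesunit}(ii) to get invertibility and power-boundedness of $T^{-1}$, then transfer (uniform) rigidity along the same sequence $(k_n)_{n\in\mathbb N}$ via the estimate $\|T^{-k_n}x-x\|\leq \|T^{-k_n}\|\,\|T^{k_n}x-x\|$. The only cosmetic difference is that you spell out the uniformly rigid case and the symmetry reduction explicitly, which the paper leaves as "a repetition of the same arguments."
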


\begin{proof} We will just show the proposition for rigid operators, the proof for the case of uniform rigidity being a repetition of the same arguments. So assume that $T$ is power bounded and rigid. Then Proposition \ref{p.bimpliesunit}, (ii), implies that $T$ is invertible and $T^{-1}$ is power bounded. It remains to show that $T^{-1}$ is rigid. Since $T$ is rigid there exists a strictly increasing sequence of positive integers $(k_n)_{n\in\mathbb N}$ such that $T^{k_n}x\to x$ for all $x\in X$. Thus, for every $x\in X$ we have 
\begin{align*}
 \|T^{-k_n}x-x\|\leq \|T^{-k_n}\| \|x-T^{k_n}x\|\leq \sup_{m\in\mathbb N} \|T^{-m}\| \|x-T^{k_n}x\|,
\end{align*}
which shows that $T^{-1}$ is rigid with the same sequence $(k_n)_{n\in\mathbb N}$.
\end{proof}

\section{Finite dimensional spaces} In this section we include a characterization of the recurrent operators $T:\C^d\to \C^d$ and $T:\R^d\to \R^d$. This is relatively straightforward and probably well known. However we provide the details here adjusted to our terminology.

We begin with the complex case. 

\begin{theorem}
	\label{t.complex} A matrix $T:\C^d\to \C^d$ is recurrent if and only if it is similar to a diagonal matrix with unimodular entries.
\end{theorem}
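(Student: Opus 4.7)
The plan is to prove the two implications separately, using the spectral results from Section \ref{s.general} for necessity and a simple Diophantine approximation argument for sufficiency.

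For the sufficiency direction, by Lemma \ref{l.equiv} I may assume $T=D=\operatorname{diag}(\lambda_1,\ldots,\lambda_d)$ with $\lambda_j\in\mathbb T$. I would then invoke the simultaneous Dirichlet approximation theorem to produce a strictly increasing sequence $(k_n)_{n\in\mathbb N}$ of positive integers such that $\lambda_j^{k_n}\to 1$ for every $j=1,\ldots,d$, so that $\|D^{k_n}-I\|\to 0$. Thus $D$ is uniformly rigid and, in particular, recurrent.

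For the necessity direction I would proceed in two steps. First, I apply Proposition \ref{p.spectrum}: since $\sigma(T)$ consists of finitely many points, each component is a singleton, and because every component must meet $\mathbb T$, every eigenvalue of $T$ is unimodular. Second, I would rule out non-trivial Jordan blocks by a direct growth estimate. (As an alternative I could try to show $T$ is power bounded and appeal to Proposition \ref{p.bimpliesunit}(ii), but the direct approach seems cleaner in finite dimensions.)

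The hard part is this second step, and I would argue by contradiction. Suppose $T$ has a Jordan block of size $k\geq 2$ corresponding to some unimodular eigenvalue $\lambda$. Write $E$ for the generalized eigenspace of $\lambda$, set $N\coloneqq (T-\lambda I)|_E$ (a nilpotent operator with $N^{k-1}\neq 0$), and let $P\colon \mathbb C^d\to E$ denote the spectral projection, which commutes with $T$. The binomial expansion
\begin{align*}
T^n Pv=(\lambda I+N)^n Pv=\sum_{j=0}^{k-1}\binom{n}{j}\lambda^{n-j}N^j Pv,
\end{align*}
combined with the standard fact that $Pv,NPv,\ldots,N^{k-1}Pv$ are linearly independent whenever $N^{k-1}Pv\neq 0$, shows that $\|T^n Pv\|$ grows at least like $n^{k-1}$; hence $T^n Pv$ cannot return to $Pv$ along any subsequence. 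Since the generalized eigenspaces of $T$ are $T$-invariant and in direct sum, $T^m v\to v$ forces $T^m Pv\to Pv$, so any $v$ with $N^{k-1}Pv\neq 0$ is non-recurrent. The set of such $v$ is the complement of $\Ker(N^{k-1}P)$, which is a proper linear subspace of $\mathbb C^d$ and hence nowhere dense. Thus $\Rec(T)$ is contained in a nowhere dense set, contradicting Proposition \ref{p.dense}. This rules out non-trivial Jordan blocks, so $T$ is diagonalizable with all eigenvalues on $\mathbb T$, as desired.
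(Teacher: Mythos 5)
Your proof is correct and follows essentially the same route as the paper: Proposition \ref{p.spectrum} forces all eigenvalues onto $\mathbb T$, the polynomial growth of $\binom{n}{j}$ in the binomial expansion of a power of a non-trivial Jordan block rules out such blocks, and simultaneous Diophantine approximation gives the converse. Your coordinate-free phrasing via spectral projections and the observation that $\Rec(T)$ would lie in the proper subspace $\Ker(N^{k-1}P)$ is a clean way to package the reduction that the paper carries out by passing to an explicit $2\times 2$ Jordan corner.
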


\begin{proof}
	We first assume that $T:\C^d\to \C^d$ is recurrent. Since $\sigma_p(T)=\sigma(T)$ and by Proposition \ref{p.spectrum} every component of the spectrum of $T$ intersects the unit circle, we conclude that $\sigma(T)=\sigma_p(T)=\{\lambda_1,\ldots,\lambda_M\}$ for some $\lambda_1,\ldots,\lambda_M\in\mathbb T$, with multiplicities $m_1,\ldots,m_M$, respectively, and $m_1+\cdots+m_M=d$. By the canonical Jordan decomposition the matrix $T$ is similar to a block-diagonal matrix $\tilde T$ of the form 
	\begin{align*}
		\tilde T\coloneqq 
		\begin{pmatrix}
			V_1& 0 & \cdots &0 \\
			0 & V_2 & \cdots & 0 \\
			\vdots & \vdots & \ddots & \vdots \\
			0 & 0 & \cdots & V_M\\
		\end{pmatrix}
		, 
	\end{align*}
	where each $V_j$, $j=1,\ldots,M$, is either a $m_j\times m_j$ Jordan matrix, that is, 
	\begin{align}
		\label{e.jordan} V_j= 
		\begin{pmatrix}
			\lambda_j &1 & \cdots&0 &0 \\
			0 & \lambda_j & \ddots &0 & 0 \\
			\vdots & \vdots & \ddots & 1 &\vdots \\
			0 & 0 & \cdots & \lambda_j & 1 \\
			0 & 0 & \cdots & 0 & \lambda j\\
		\end{pmatrix}
		, 
	\end{align}
	or of the form $V_j=\lambda_j I_{m_j}$, where $I_{m_j}$ is the $m_j$-dimensional identity matrix.
	
	We claim that each block $V_j$ is of the form $\lambda_j I_{m_j}$. Arguing by contradiction we assume that there exists at least one Jordan block $V_{j_o}$ of the form \eqref{e.jordan}, with $m_{j_o}\geq 2$. This implies that the $2\times 2$ matrix $V$, where 
	\begin{align*}
		V= 
		\begin{pmatrix}
			\lambda_{j_o} &1 \\
			0 & \lambda_{j_o}\\
		\end{pmatrix}
		, 
	\end{align*}
	is recurrent on $\mathbb C^2$. An easy calculation shows that for every natural number $n$ we have 
	\begin{align*}
		V^n = 
		\begin{pmatrix}
			\lambda_{j_o} ^n & n\lambda_{j_o}^{n-1} \\
			0 & \lambda_{j_o} ^n\\
		\end{pmatrix}
		. 
	\end{align*}
	Since $\overline{\Rec(V)}=\C^2$, there exists a recurrent vector $z=(z_1,z_2)^t \in\C^{2\times 1}$ with $z_2\neq 0$. Hence, there exists a strictly increasing sequence of positive integers $(k_n)_{n\in\mathbb N}$ such that $V^{k_n}z\to z$. Since $z_2\neq 0$ this implies that $\lambda_{j_o} ^{k_n}\to 1$. On the other hand we must have 
	\begin{align*}
		\lambda_{j_o} ^{k_n} z_1+k_n\lambda_{j_o} ^{k_n-1}z_2\to z_1. 
	\end{align*}
	This is clearly impossible since $z_2\neq 0$ and this contradiction proves the claim.
	
	We have showed that $T$ is similar to a diagonal matrix with unimodular entries. The opposite direction follows by observing that if $(a_1,\ldots,a_d)\in \mathbb T^d$ then there exists a strictly increasing sequence of positive integers $(k_n)_{n\in\mathbb N}$ such that $a_j ^{k_n}\to 1 $ for all $j=1,2,\ldots,d$. 
\end{proof}

We now move to the study of the real case.

\begin{theorem}
	A matrix $T:\R^d\to \R^d$ is recurrent if and only if it is similar to a block diagonal matrix if the form
	\begin{align*}
		\begin{pmatrix}
			J_1 &0 & \cdots&0 &0 \\
			0 & J_2 & \ddots &0 & 0 \\
			\vdots & \vdots & \ddots & 0 &\vdots \\
			0 & 0 & \cdots &J_{M-1} & 0 \\
			0 & 0 & \cdots & 0 & J_M\\
		\end{pmatrix}
		, 
	\end{align*}
	where each $J_j$, $1\leq j \leq M,$ is either a $2\times 2$ rotation matrix or a $1\times 1$ matrix with entry either $1$ or $-1$. 
\end{theorem}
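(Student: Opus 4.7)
The plan is to mirror the argument of Theorem~\ref{t.complex} using the real Jordan canonical form, whose blocks are either standard Jordan blocks at a real eigenvalue $\lambda$, or ``rotation'' Jordan blocks associated to a non-real conjugate pair $\mu,\bar\mu=|\mu|e^{\pm i\theta}$, built from $2\times 2$ diagonal blocks $|\mu|R_\theta$ with $2\times 2$ identity superdiagonal blocks.

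For the forward direction, suppose $T$ is recurrent. I would first show that every eigenvalue lies on $\mathbb T$ by reproducing the reasoning of Proposition~\ref{p.spectrum} in the real setting: grouping a non-real eigenvalue with its conjugate when necessary produces a real invariant subspace, yielding a decomposition $T=T_1\oplus T_2$, and Propositions~\ref{p.spectralradius}, \ref{p.decomp}, and \ref{p.invertible} together rule out any eigenvalue inside or outside $\mathbb T$. Next, I would rule out non-trivial Jordan blocks. A non-trivial standard Jordan block at $\pm 1$ is handled verbatim by the argument of Theorem~\ref{t.complex} restricted to $\mathbb R^2$. A non-trivial rotation block reduces to the $4\times 4$ model
\[
V=\begin{pmatrix} R_\theta & I_2 \\ 0 & R_\theta\end{pmatrix},\qquad V^n=\begin{pmatrix} R_{n\theta} & n R_{(n-1)\theta} \\ 0 & R_{n\theta}\end{pmatrix},
\]
and by density of recurrent vectors I may pick a recurrent $z=(z_1,z_2,z_3,z_4)^t$ with $(z_3,z_4)\neq 0$. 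Along any sequence $k_n\to+\infty$ with $V^{k_n}z\to z$, the last two coordinates give $R_{k_n\theta}(z_3,z_4)^t\to(z_3,z_4)^t$, so $R_{k_n\theta}(z_1,z_2)^t$ stays bounded; but the first two coordinates then require
\[
R_{k_n\theta}(z_1,z_2)^t+k_n R_{(k_n-1)\theta}(z_3,z_4)^t\to (z_1,z_2)^t,
\]
which is impossible because the second term has norm $k_n\|(z_3,z_4)\|\to+\infty$, rotations being isometries.

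For the converse, by Lemma~\ref{l.equiv} it suffices to show that a block diagonal matrix $J$ of the stated form is recurrent. By Proposition~\ref{p.invariances}(ii), $J$ is recurrent iff $J^2$ is, and passing to $J^2$ replaces every $-1$ block by $+1$ and every $R_\theta$ by $R_{2\theta}$. Thus I may assume that $J$ has only $+1$ and rotation blocks $R_{\theta_1},\ldots,R_{\theta_k}$, so $J^n$ is determined by $(n\theta_1,\ldots,n\theta_k)\in\mathbb T^k$. By compactness of the torus some subsequence $(n_p\theta_1,\ldots,n_p\theta_k)\to(0,\ldots,0)$, whence $J^{n_p}\to I$ in operator norm and every vector in $\mathbb R^d$ is recurrent for $J$.

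The main obstacle is the analysis of non-trivial rotation Jordan blocks: one must simultaneously track a bounded rotated term and a linearly growing coefficient and verify that they cannot cancel. A secondary care point is that the spectral propositions in Section~\ref{s.general} are formulated for complex spaces; however, the results invoked above (Propositions~\ref{p.spectralradius}, \ref{p.decomp}, \ref{p.invertible}, and \ref{p.invariances}) rely only on orbit-level statements and transfer to the real setting without modification.
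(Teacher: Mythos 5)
Your proof is correct and follows essentially the same route as the paper: real Jordan canonical form, elimination of non-trivial blocks via the linearly growing off-diagonal term in $V^n$, and the converse via simultaneous recurrence of rotations on the torus (the paper rules out non-unimodular rotation-scaling blocks by computing $\|C^n x\|=(a^2+b^2)^{n/2}\|x\|$ directly rather than by adapting the spectral propositions, but this is cosmetic). Your passage to $J^2$ to absorb the $-1$ blocks in the converse is a minor notational variant of the paper's argument.
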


\begin{proof}
	Let $T:\R^d\to \R^d$ be recurrent. By the canonical Jordan decomposition $T$ is similar to a block-diagonal matrix consisting of blocks $J_1,\ldots J_M$. Each block $J_j$ is a real Jordan block and for real Jordan blocks there are two mutually exclusive cases:
\paragraph{\textbf{case 1:}} The Jordan block $J_j$ is identical to a complex Jordan block with real eigenvalue. In this case we conclude by the same argument as in the proof of Theorem~\ref{t.complex} that $J$ is of the form $J_j=\lambda_j I_{m_j}$ with $\lambda_j \in\{-1,+1\}$ and $1\leq m_j \leq d$. 
\paragraph{\textbf{case 2:}} The Jordan block $J_j$ is a block matrix of the form
	\begin{align}\label{e.jordanreal}
		 J= 
		\begin{pmatrix}
			C &I_2 & \cdots&0 &0 \\
			0 & C & \ddots &0 & 0 \\
			\vdots & \vdots & \ddots & I_2 &\vdots \\
			0 & 0 & \cdots &C & I_2 \\
			0 & 0 & \cdots & 0 & C\\
		\end{pmatrix}
		, 
	\end{align}
	where $C$ is a $2\times 2$ matrix
	\begin{align*}
		C = 
		\begin{pmatrix}
			a & b \\
			-b & a\\
		\end{pmatrix}
		,
	\end{align*}
with $a,b\in\R$.
	
	Since $T$ is recurrent and $T$ is a block diagonal matrix consisting of the blocks $J_j$, Lemma \ref{l.equiv} implies that each block $J_j$ is itself recurrent on the corresponding subspace of $\R^d$. Likewise, since the last block row of $J_j$ is orthogonal to all but the last block columns of $J_j$, the $2\times 2$ matrix $C$ has to be recurrent on $\R^2$. However, for every $x=(x_1,x_2) \in\R^2$ and every positive integer $n$ we have 
	\begin{align*}
		\|C^n x\|=(a^2+b^2)^\frac{n}{2}\|x\|, 
	\end{align*}
	where $\|\cdot\|$ is the Euclidean norm on $\R^2$. From this identity it readily follows that $C$ is recurrent if and only if $a^2+b^2=1$. Thus $C$ is a rotation. 
	
	We now show that if $T$ is recurrent then every $J$ is itself a rotation $C$. Indeed, suppose that $J$ has at least two blocks. Then the block matrix 
	\begin{align*}
		S = 
		\begin{pmatrix}
			C & I_2 \\
			O & C\\
		\end{pmatrix}
	\end{align*}
	must be recurrent on $\R^4$, for some rotation matrix $C$. However, the iterates of $S$ have the form 
	\begin{align*}
		S^n = 
		\begin{pmatrix}
			C^n & nC^{n-1} \\
			O & C^n\\
		\end{pmatrix}
		. 
	\end{align*}
	By an argument identical to the one used in the case of a complex Jordan matrix this leads to a contradiction.
	
	The considerations above show that if $T$ is recurrent then $T$ is similar to a block diagonal matrix, with each block being either a rotation or a $1\times 1$ matrix with entry either $1$ or $-1$. Conversely, every matrix of this form is easily seen to be recurrent. 
\end{proof}

\begin{remark} The results in this section show that, in finite dimensions, recurrence of linear operators is equivalent to uniform rigidity.
 
\end{remark}

\section{Weighted shifts and diagonal operators}\label{s.weighted} In this section we study the recurrence properties of weighted shifts and diagonal operators on classical sequence spaces. We will denote by $\ell^p(\mathbb Z)$ the Banach space of doubly indexed sequences $a=(a_n)_{n\in\mathbb Z}$ such that 
\begin{align*}
	\|a\|_p\coloneqq \big( \sum_{n\in\mathbb Z} |a_n|^p\big)^\frac{1}{p}<+\infty,\quad 1\leq p <\infty, 
\end{align*}
and 
\begin{align*}
	\|a\|_\infty \coloneqq \sup_{n\in\mathbb Z} |a_n|<+\infty. 
\end{align*}
Let $w=(w_n)_{n\in\mathbb Z}$ be a \emph{weight sequence}, that is a bounded sequence of positive real numbers. The \emph{bilateral weighted backward shift} with \emph{weight sequence} $w$ is the linear operator $B_w:\ell^p(\mathbb Z)\to \ell^p(\mathbb Z)$ defined as 
\begin{align*}
	B_w e_n\coloneqq w_n e_{n-1}, \quad n\in\mathbb Z, 
\end{align*}
where $(e_n)_{n\in\mathbb Z}$ denotes the canonical base of $\ell^p(\mathbb Z)$, $1\leq p <+\infty$. It is obvious that $B_w$ defines a bounded linear operator with $\|B_w\|\leq \|w\|_{\infty}$.

We define the \emph{unilateral weighted backward shift} $B_w:\ell^p(\mathbb N)\to \ell^p(\mathbb N)$ in an analogous way with the obvious modifications.

It is known that a unilateral or a bilateral weighted backward shift on $\ell ^p (\mathbb Z)$, $1\leq p<+\infty$, is recurrent if and only if it is hypercyclic. See for example \cite{ChSe},\cite{Sece:thesis} and \cite{CP}. For a characterization of hypercyclic weighted shifts in terms of the weight sequence see for example \cite{Salas}.

As for rigidity for unilateral or bilateral weighted shifts there is not so much to talk about since these operators are never rigid. Indeed, if $B$ is a unilateral or bilateral weighted shift on $\ell^p$, say, then for every basis vector $e_k$ we have that $\|B^n e_k-e_k\|_p\geq 1$ if $n\geq 1$, thus $B$ cannot be rigid.
\subsection{Non separable Banach spaces.} As we mentioned above, there exist hypercyclic and thus recurrent weighted shifts on every $\ell^p$ with $1\leq p<\infty$. It turns out however that there are no recurrent weighted shifts on $\ell^\infty(\mathbb N)$ or $\ell^\infty(\mathbb Z)$. 

\begin{theorem}
	There does not exist recurrent unilateral or bilateral weighted backward shifts on $\ell^{\infty}(\mathbb{N})$ or $\ell^\infty(\mathbb Z)$, respectively. 
\end{theorem}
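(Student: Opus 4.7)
The plan is to derive a contradiction by exhibiting a specific vector $x\in\ell^\infty$ that cannot be well approximated by any vector in $\Rec(B_w)$. The intuition is that sup-norm recurrence on a vector of near-constant modulus forces a uniform almost-periodicity condition on the underlying sequence, while the non-separable space $\ell^\infty$ is large enough to contain vectors that are ``as non-almost-periodic as possible''. I will focus on the unilateral case $B_w\colon\ell^\infty(\mathbb N)\to\ell^\infty(\mathbb N)$; the bilateral case on $\ell^\infty(\mathbb Z)$ is handled by the same argument with the obvious modifications.

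The first step is to exhibit a vector $x\in\ell^\infty(\mathbb N)$ with $|x_j|=1$ for all $j$ and the property that
\begin{align*}
\sup_{j\in\mathbb N}|x_{j+k}-x_j|=2\quad\text{for every integer}\quad k\ge 1.
\end{align*}
Since $x_j\in\{-1,+1\}$ takes only two values, this reduces to picking a $\{0,1\}$-sequence $(a_j)_{j\in\mathbb N}$ that fails to be $k$-periodic for every $k\ge 1$ and setting $x_j=(-1)^{a_j}$. A concrete choice is $a_j=1$ precisely when $j$ is a triangular number; the asymptotic density $\#\{j\le N:a_j=1\}\sim\sqrt{2N}$ rules out periodicity.

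Now assuming that $B_w$ is recurrent, Proposition~\ref{p.dense} provides, for any prescribed $\epsilon>0$, a recurrent vector $y$ with $\|y-x\|_\infty<\epsilon$, so in particular $|y_j|\in(1-\epsilon,1+\epsilon)$ uniformly in $j$. For any prescribed $\delta>0$, the recurrence of $y$ supplies a positive integer $k$ with $\|B_w^k y-y\|_\infty<\delta$; setting $p_{j,k}\coloneqq w_{j+1}w_{j+2}\cdots w_{j+k}$ this reads
\begin{align*}
|p_{j,k}\, y_{j+k}-y_j|<\delta\quad\text{for every}\quad j\in\mathbb N.
\end{align*}
Taking moduli and using $|y_j|,|y_{j+k}|\approx 1$ one gets $|p_{j,k}-1|\le C(\epsilon+\delta)$ uniformly in $j$, and the triangle inequality then yields $|y_{j+k}-y_j|\le C'(\epsilon+\delta)$ uniformly in $j$. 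Consequently
\begin{align*}
\sup_{j\in\mathbb N}|x_{j+k}-x_j|\le 2\epsilon+C'(\epsilon+\delta),
\end{align*}
which is strictly less than $2$ if $\epsilon$ and $\delta$ are chosen small enough, contradicting the defining property of $x$.

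The delicate point in the plan is the uniformity in $j$ of the estimate $|p_{j,k}-1|\le C(\epsilon+\delta)$: one must check that the recurrence inequality evaluated on a vector of near-unit modulus really does force the weight products to be uniformly close to $1$, rather than only close on average or pointwise in $j$. Once this uniformity is secured, the argument essentially reduces the putative recurrence of $B_w$ to an impossible uniform almost-periodicity property for $x$. For the bilateral case on $\ell^\infty(\mathbb Z)$ the same proof applies verbatim after replacing $(a_j)$ by any $\{0,1\}$-valued sequence on $\mathbb Z$ that is not $k$-periodic for any $k\ge 1$, for instance the extension of the unilateral example by $0$ on the negative integers.
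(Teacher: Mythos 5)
Your argument is correct, and it goes by a genuinely different route than the paper's. The paper plants a single spike: it takes $y=(2,M,2,2,\ldots)$ with $M$ large, approximates it by a recurrent vector, and compares the two weight products $\prod_{i=1}^{l}\alpha_i$ and $\prod_{i=1}^{l}\alpha_{i+1}$ that must reproduce the entries $2$ and $M$ respectively; since the coordinates of the recurrent vector at positions $l+1,l+2$ are both comparable to $2$, the ratio forces $\alpha_{l+1}\gtrsim \alpha_1(M-1)/5$, contradicting the boundedness of the weight sequence as $M\to\infty$. You instead exploit the other way in which $\ell^\infty$ is large: it contains unimodular vectors with no approximate shift-periodicity at any lag. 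Your delicate point does go through: from $|p_{j,k}y_{j+k}-y_j|<\delta$ and $1-\epsilon<|y_j|<1+\epsilon$ for all $j$, positivity of the weights gives
\begin{align*}
\frac{1-\epsilon-\delta}{1+\epsilon}<p_{j,k}<\frac{1+\epsilon+\delta}{1-\epsilon}
\end{align*}
uniformly in $j$, which is all you need for the triangle-inequality step, and your aperiodicity certificate (density of the triangular numbers is $o(N)$, while a $k$-periodic $\{0,1\}$-sequence containing a $1$ has density at least $1/k$) is sound; the zero extension also handles the bilateral case since any period would propagate a $1$ into the negative half-line. The paper's proof is shorter and pins the contradiction on the boundedness of the weights; yours is more conceptual, showing that sup-norm recurrence forces a uniform almost-periodicity that an explicit $\pm1$-valued vector violates, and it makes transparent that the obstruction is present even for the unweighted shift.
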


\begin{proof}
	Let $T:\ell^{\infty}(\mathbb N )\to \ell^\infty (\mathbb{N})$ be a unilateral backward weighted shift with weight sequence $a = (\alpha_n)_{n\in\mathbb N }$ and suppose that $T$ is recurrent. Let $M>1$ and define the vector $y\coloneqq(2,M,2,2,\ldots )$. Since $T$ is recurrent there exist a recurrent vector $x=(x_n)_{n\in\mathbb{N}}$ and a positive integer $l>1$ such that $\| x-y\|_{\infty}<\frac{1}{2}$ and
	\begin{align*}
		\| T^lx-x\|_{\infty}=\displaystyle{\sup_{j\geq 1} \left | \prod_{i=1}^{l}\alpha_{i+j-1}x_{l+j}-x_j\right | <\frac{1}{2}}.
	\end{align*}
	We get that
	\begin{align*}
		1<\displaystyle{\left | \prod_{i=1}^{l}\alpha_{i}x_{l+1}\right | <3}, \quad M-1<\displaystyle{\left | \prod_{i=1}^{l}\alpha_{i+1}x_{l+2}\right |}<M+1 
	\end{align*}
	and $\frac{3}{2}<|x_{l+1}|$, $|x_{l+2}|<\frac{5}{2}$. Using the previous estimates and since 
	\begin{align*}
		\frac{\displaystyle{\left | \prod_{i=1}^{l}\alpha_{i+1}x_{l+2}\right |}}{\displaystyle{\left | \prod_{i=1}^{l}\alpha_{i}x_{l+1}\right |}}=\frac{\alpha_{l+1}}{\alpha_1}\,\frac{|x_{l+2}|}{|x_{l+1}|}, 
	\end{align*}
	we arrive at 
	\begin{align*}
		\alpha_{l+1}>\frac{a_1}{5} (M-1). 
	\end{align*}
	Since $M$ can be chosen to be arbitrarily large, we conclude that the sequence $ (\alpha_n)_{n\in\mathbb{N}}$ is unbounded which is a contradiction. The proof for bilateral weighted shifts is essentially identical so we omit it. 
\end{proof}
A well known result of Salas from \cite{Salas} says that if $T$ is any unilateral weighted backward shift on $\ell^2(\mathbb N)$ then the operator $I+T$ is hypercyclic. In the non-separable case it is easy to see that $I+T$ can never be recurrent.

\begin{proposition}
	Let $T:\ell^\infty(\mathbb N)\to \ell^\infty(\mathbb N)$ be a unilateral weighted backward shift. Then $I+T$ is not recurrent. The same is true if $T:\ell^\infty(\mathbb Z)\to \ell^\infty(\mathbb Z)$ is a bilateral weighted backward shift. 
\end{proposition}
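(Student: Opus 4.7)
The plan is to locate a fixed open ball in $\ell^\infty(\mathbb{N})$ that contains no recurrent vector for $I+T$; by Proposition \ref{p.dense} this will immediately show that $I+T$ is not recurrent. The natural candidate is the ball $B(\mathbf{1},\tfrac{1}{2})$ about the constant sequence $\mathbf{1}=(1,1,1,\ldots)$, because every $x$ in it satisfies the uniform lower bound $\operatorname{Re}(x_j)>\tfrac{1}{2}$ for all $j$, which will prevent the cancellation needed for recurrence.

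Suppose, for contradiction, that some $x\in\Rec(I+T)$ lies in this ball. First, I would expand $(I+T)^k=\sum_{l=0}^k\binom{k}{l}T^l$ by the binomial theorem and use the explicit coordinate formula $(T^l x)_j=\alpha_j\alpha_{j+1}\cdots\alpha_{j+l-1}x_{j+l}$ for a weighted backward shift with weight sequence $(\alpha_n)_{n\in\mathbb{N}}$ (this is exactly the formula already used in the proof of the theorem immediately above). Reading the result at the first coordinate gives
\begin{align*}
((I+T)^k x)_1=\sum_{l=0}^{k}\binom{k}{l}\alpha_1\alpha_2\cdots\alpha_l\, x_{l+1},
\end{align*}
with the convention that the $l=0$ term is $x_1$. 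Since $x$ is recurrent, there is a strictly increasing sequence $k_n\to+\infty$ with $(I+T)^{k_n}x\to x$ in $\ell^\infty$-norm, and subtracting $x_1$ from the first-coordinate identity forces
\begin{align*}
\sum_{l=1}^{k_n}\binom{k_n}{l}\alpha_1\alpha_2\cdots\alpha_l\, x_{l+1}\longrightarrow 0\quad\text{as}\quad n\to+\infty.
\end{align*}

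The key step, where the choice of the ball pays off, is to read off the real part of this sum. Since each weight is strictly positive and $\operatorname{Re}(x_{l+1})>\tfrac{1}{2}$, every summand contributes a positive amount to the real part, so
\begin{align*}
\operatorname{Re}\Bigl(\sum_{l=1}^{k_n}\binom{k_n}{l}\alpha_1\cdots\alpha_l\, x_{l+1}\Bigr)\geq \tfrac{1}{2}\sum_{l=1}^{k_n}\binom{k_n}{l}\alpha_1\cdots\alpha_l\geq \tfrac{1}{2}k_n\alpha_1,
\end{align*}
where the last bound comes from the $l=1$ term alone. Since $\alpha_1>0$, the right-hand side tends to $+\infty$, contradicting convergence to $0$. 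I do not anticipate any essential difficulty; the only (minor) conceptual point is picking an open set in which all vectors have a uniform positive lower bound on the real parts of their coordinates so that no cancellation can occur in the binomial expansion, and the ball around $\mathbf{1}$ does exactly that. The bilateral case on $\ell^\infty(\mathbb{Z})$ is entirely analogous: I would center the ball at the bi-infinite constant sequence, read off any fixed coordinate $j$, and obtain the divergent lower bound $\tfrac{1}{2}k_n\alpha_j$, yielding the same contradiction.
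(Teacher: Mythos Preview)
Your proposal is correct and follows essentially the same approach as the paper: both arguments pick a vector in the ball $B(\mathbf{1},\tfrac12)$, expand $(I+T)^k$ binomially, read off the first coordinate, and exploit the positivity of the weights together with the uniform lower bound $\operatorname{Re}(x_j)>\tfrac12$ to get a divergent lower bound from the $l=1$ term. The only cosmetic difference is that you invoke Proposition~\ref{p.dense} to place an actual recurrent vector in the ball and then let $k_n\to+\infty$, whereas the paper works directly from the definition of recurrence with a single sufficiently large $N$; the substance is identical.
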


\begin{proof}
	Let $T:\ell^\infty(\mathbb N) \to \ell^\infty(\mathbb N)$ be a unilateral weighted backward shift with weight sequence $a=(a_n)_{n\in\mathbb N}$ and suppose that $I+T$ is recurrent. Let $y\coloneqq (1,1,\ldots,1,\ldots)$. There exist a vector $x\in\ell^\infty(\mathbb N)$ and a positive integer $N$ with $Na_1>5$ such that $\|x-y\|_\infty<\frac{1}{2}$ and 
	\begin{align*}
		\| (I+T)^N x-x\|_\infty= \sup_{j\geq 1} \ABs{ \bigg( \sum_{l=0} ^N \binom{N}{l}T^l x \bigg)_j -x_j} = \sup_{j\geq 1} \ABs{ \sum_{l=0} ^N \binom{N}{l}\big(\prod_{i=1}^{l}\alpha_{i+j-1}\big)x_{l+j}-x_j } <1. 
	\end{align*}
	Taking real parts in the previous inequality and $j=1$ in the supremum we must have 
	\begin{align*}
		\ABs{ \sum_{l=0} ^N \binom{N}{l}\big(\prod_{i=1}^{l}\alpha_i \big)\textnormal{Re} (x_{l+1}) -\textnormal{Re} (x_1)} <1. 
	\end{align*}
	The last implies that 
	\begin{align*}
		\ABs{ \sum_{l=0} ^N \binom{N}{l}\big(\prod_{i=1}^{l}\alpha_i \big)\textnormal{Re} (x_{l+1}) } <1+|\textnormal{Re} (x_1)|=1+\textnormal{Re} (x_1) . 
	\end{align*}
	Now observe that all the terms in the sum above are positive, so we conclude that 
	\begin{align*}
		N a_1\textnormal{Re}(x_2) < 1+\textnormal{Re} (x_1) . 
	\end{align*}
	Taking into account that $\textnormal{Re}(x_2)>1/2$ and $\textnormal{Re}(x_1)<3/2$, the last inequality above implies that $Na_1<5$, which contradicts the choice of $N$. A similar argument shows that $I+T$ is never recurrent when $T$ is a bilateral weighted backward shift. 
\end{proof}

\subsection{Diagonal operators.} We now turn to some examples of diagonal recurrent operators. We will see that diagonal operators on classical sequence spaces are recurrent if and only if they are rigid. There are however diagonal operators on $c_0(\mathbb N)$, for example, that are rigid without being uniformly rigid. See the discussion in Example \ref{ex.rignounif}.

\begin{theorem}
	\label{t.linfty} For a sequence $\lambda =(\lambda_1,\ldots,\lambda_k,\ldots )\in \ell^\infty(\mathbb N)$ we define the diagonal operator $T_\lambda:\ell^\infty(\mathbb N) \to \ell^\infty(\mathbb N)$ by the formula 
	\begin{align*}
		T_\lambda (x_1,x_2,\ldots,x_k,\ldots)\coloneqq (\lambda_1 x_1,\ldots,\lambda_k x_k,\ldots). 
	\end{align*}
	The following are equivalent: 
	\begin{itemize}
		\item [(i)] $T$ is recurrent. 
		\item [(ii)] $T$ is rigid. 
		\item [(iii)] $T$ is uniformly rigid. 
		\item [(iv)] For every $k\in\mathbb N$ we have $\lambda_k=e^{2\pi i \theta_k}$ for some $(\theta_k)_{k\in\mathbb N}\subset \R$ and 
		\begin{align*}
			\liminf_{n\to+\infty} \sup_{k\in\mathbb N}\abs{e^{2\pi i n \theta_k}-1} \to 0. 
		\end{align*}
	\end{itemize}
\end{theorem}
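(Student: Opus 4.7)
The plan is to establish the chain (iii) $\Rightarrow$ (ii) $\Rightarrow$ (i) $\Rightarrow$ (iv) $\Rightarrow$ (iii); three of the four implications are essentially bookkeeping, and the heart of the argument lies in (i) $\Rightarrow$ (iv). The implications (iii) $\Rightarrow$ (ii) $\Rightarrow$ (i) follow at once from the definitions together with Proposition~\ref{p.dense}. For (iv) $\Rightarrow$ (iii), the key observation is that for a diagonal operator on $\ell^\infty(\mathbb N)$ one has the identity $\|T_\lambda^n-I\|_{\mathrm{op}}=\sup_{k\in\mathbb N}|\lambda_k^n-1|$, so any sequence realizing the $\liminf$ in (iv) is precisely a sequence witnessing uniform rigidity.

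The substantial implication (i) $\Rightarrow$ (iv) I would carry out in two steps. In the first step I show $|\lambda_k|=1$ for every $k$. Assuming $|\lambda_k|\neq 1$ for some $k$, I would test the recurrence condition on the ball $U=B(e_k,\epsilon)$. Any $x\in U\cap T_\lambda^{-n}(U)$ must satisfy $|x_k-1|<\epsilon$ and $|\lambda_k^n x_k-1|<\epsilon$, which together force
\begin{align*}
\frac{1-\epsilon}{1+\epsilon}<|\lambda_k|^n<\frac{1+\epsilon}{1-\epsilon}.
\end{align*}
On the other hand, for every $n\geq 1$ one has $\big||\lambda_k|^n-1\big|\geq \big||\lambda_k|-1\big|>0$, a fixed positive constant independent of $n$. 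Choosing $\epsilon$ small enough that the interval above is contained in a neighborhood of $1$ of radius smaller than $\big||\lambda_k|-1\big|$ therefore makes $U\cap T_\lambda^{-n}(U)=\emptyset$ for every $n\geq 1$, contradicting recurrence.

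In the second step I establish the $\liminf$ condition. Fix $\delta>0$ and $N\in\mathbb N$; the goal is to produce $n'\geq N$ with $\sup_k|\lambda_k^{n'}-1|<\delta$. I would apply recurrence to the open set $U=B(\mathbf 1,\epsilon)$, where $\mathbf 1=(1,1,\ldots)\in\ell^\infty(\mathbb N)$ and $\epsilon$ is to be chosen at the end. Recurrence produces $n\geq 1$ and $x\in U$ with $T_\lambda^n x\in U$, whence the triangle inequality gives $\sup_k|\lambda_k^n-1|\cdot|x_k|<2\epsilon$, and using $|x_k|>1-\epsilon$ this yields $\sup_k|\lambda_k^n-1|<2\epsilon/(1-\epsilon)$. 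The integer $n$ produced by recurrence may well be small, so to reach an index above $N$ I invoke the telescoping identity
\begin{align*}
\lambda_k^{Nn}-1=(\lambda_k^n-1)\sum_{j=0}^{N-1}\lambda_k^{jn},
\end{align*}
which combined with unimodularity from the first step gives $|\lambda_k^{Nn}-1|\leq N|\lambda_k^n-1|$ for every $k$. Taking $\epsilon$ small enough that $2N\epsilon/(1-\epsilon)<\delta$ and setting $n'=Nn\geq N$ then yields $\sup_k|\lambda_k^{n'}-1|<\delta$, as desired.

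The main obstacle is the gap between what recurrence literally supplies, namely a single possibly small integer $n$, and what the $\liminf$ condition demands, namely small values of $\sup_k|\lambda_k^n-1|$ at arbitrarily large $n$. Bridging this gap is the purpose of the telescoping step, and it is precisely here that the unimodularity established in the first step is essential.
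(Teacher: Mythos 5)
Your proof is correct, and all four implications go through as written; the only real difference from the paper's argument lies in how condition (iv) is extracted from recurrence. The paper first shows $|\lambda_k|=1$ by picking a recurrent vector with all coordinates nonzero, establishes the same norm identity $\|T_\lambda^n-I\|=\sup_{k}|\lambda_k^n-1|$ that you use for (iv) $\Rightarrow$ (iii), and then invokes Proposition~\ref{p.bimpliesunit}(v) (a recurrent power-bounded operator satisfies $\Rec(T)=X$, via the closedness of $\Rec(T)$ from Lemma~\ref{l.rec-closed}) to conclude that $\mathbf 1$ is itself a recurrent vector; the strictly increasing return sequence for $\mathbf 1$ then gives (iv) at once. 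You instead work directly with the open-set formulation of recurrence on the balls $B(e_k,\epsilon)$ and $B(\mathbf 1,\epsilon)$, and you compensate for the fact that the definition only hands you a single, possibly small, return time $n$ by the telescoping estimate $|\lambda_k^{Nn}-1|\le N\,|\lambda_k^n-1|$, which is legitimate once unimodularity is in hand. Your route is more self-contained --- it never needs the closedness of $\Rec(T_\lambda)$ or that every vector is recurrent --- at the cost of extra $\epsilon$-bookkeeping and the auxiliary multiplicative step; the paper's route is shorter but leans on the earlier structural results for power-bounded recurrent operators. One cosmetic point: your displayed inequality $\sup_k|\lambda_k^n-1|\cdot|x_k|<2\epsilon$ should be read coordinatewise before taking the supremum, but the bound $\sup_k|\lambda_k^n-1|\le 2\epsilon/(1-\epsilon)$ that you actually use is correct.
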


\begin{proof}
	First of all we claim that if $T_\lambda$ is recurrent then necessarily $|\lambda_k|=1$ for all $k\in\mathbb N$. Indeed, if $\overline{\Rec(T_\lambda)}=\ell^\infty(\mathbb N)$ then there exists a recurrent vector $y=(y_1,\ldots,y_k,\ldots)\in\ell^\infty(\mathbb N)$ with $y_k\neq 0$ for all $k\in\mathbb N$. Using this it is straightforward to show the claim. So, it suffices to consider unimodular $\lambda_k$'s, i.e., $\lambda_k=e^{2\pi i \theta_k}$ for some $(\theta_k)_{k\in\mathbb N}\subset \R$.
	
	Next, we observe that for every $n\in \mathbb N$ we have 
	\begin{align*}
		\norm T_\lambda ^n-I. . = \sup_{\|x\|_{\infty}\leq 1} \norm T_\lambda ^n x- x. \infty. \leq \sup_{k\in \mathbb N} \abs{e^{2\pi i n \theta_k}-1} \|x\|_\infty. 
	\end{align*}
	On the other hand, for $\textbf 1\coloneqq (1,1,\ldots,1,\ldots)\in \ell^\infty(\mathbb N)$ we have 
	\begin{align*}
		\norm T_\lambda ^n \textbf 1- \textbf 1. . = \sup_{k\in \mathbb N} \abs{e^{2\pi i n \theta_k}-1}. 
	\end{align*}
	We conclude that $ \norm T_\lambda ^n-I. . =\norm T_\lambda ^n \textbf 1- \textbf 1. . = \sup_{k\in \mathbb N} \abs{e^{2\pi i n \theta_k}-1}.$ Using this together with Proposition \ref{p.bimpliesunit}, (v), it is routine to show the equivalence of (i)-(iv). 
\end{proof}

\begin{theorem}
	For a sequence $\lambda \in \ell^\infty(\mathbb N)$ we define the operator $T_\lambda:X\to X$ as above, where $X=c_0(\mathbb N)$ or $X=\ell^p(\mathbb N)$, $1\leq p<+\infty$. 
	\begin{itemize}
		\item [(A)] The following are equivalent: 
		\begin{itemize}
			\item[(i)] $T_\lambda$ is recurrent 
			\item[(ii)] $T_\lambda$ is rigid. 
			\item[(iii)] For every $k\in\mathbb N$ we have $|\lambda_k|=1$. 
		\end{itemize}
		\item [(B)] The following are equivalent: 
		\begin{itemize}
			\item[(i)] $T_\lambda$ is uniformly rigid. 
			\item[(ii)]For every $k\in\mathbb N$ we have $\lambda_k=e^{2\pi i \theta_k}$ for some $(\theta_k)_{k\in\mathbb N}\subset \R$ and 
			\begin{align*}
				\liminf_{n\to+\infty} \sup_{k\in\mathbb N}\abs{e^{2\pi i n \theta_k}-1} \to 0. 
			\end{align*}
		\end{itemize}
	\end{itemize}
\end{theorem}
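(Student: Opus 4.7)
The plan is to establish the equivalences in (A) through a triangle of implications, and then to reduce (B) to a computation of the operator norm $\|T_\lambda^n - I\|$ in the same spirit as Theorem \ref{t.linfty}. The implication (A)(ii) $\Rightarrow$ (A)(i) is immediate from the definitions. For (A)(i) $\Rightarrow$ (A)(iii), I would fix an index $k$, and use Proposition \ref{p.dense} to pick a recurrent vector $x \in X$ with $x_k \neq 0$; selecting a strictly increasing $(k_n)_{n\in\mathbb N}$ with $T_\lambda^{k_n}x \to x$ and reading off the $k$-th coordinate gives $\lambda_k^{k_n} x_k \to x_k$, hence $\lambda_k^{k_n} \to 1$, which forces $|\lambda_k| = 1$.

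For (A)(iii) $\Rightarrow$ (A)(ii), write $\lambda_k = e^{2\pi i \theta_k}$. First I would use the compactness of $\mathbb T$ together with a standard diagonal extraction to produce a single strictly increasing sequence $(k_n)_{n\in\mathbb N}$ with $k_n \to +\infty$ and $\lambda_k^{k_n} \to 1$ as $n \to +\infty$, for every $k \in \mathbb N$. Then I would verify that $T_\lambda^{k_n}x \to x$ for every $x \in X$. When $X = \ell^p$ this follows from dominated convergence applied termwise to
\[
\|T_\lambda^{k_n}x-x\|_p^p=\sum_{k\in\mathbb N} |\lambda_k^{k_n} - 1|^p |x_k|^p,
\]
with dominating summable series $k \mapsto 2^p |x_k|^p$. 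When $X = c_0$ the argument is even more direct: given $\epsilon > 0$, truncate $x$ outside a finite initial segment $\{1,\ldots,N\}$ where $|x_k|$ is already small, then take $n$ large enough to control the finitely many remaining coordinates.

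For Part (B), the key observation is the identity $\|T_\lambda^n - I\| = \sup_{k\in\mathbb N} |\lambda_k^n - 1|$, valid on both $c_0$ and $\ell^p$, $1 \leq p < \infty$. Testing against the standard basis vectors $e_k$ gives the lower bound, while the upper bound follows from the pointwise estimate $\sum_k |\lambda_k^n - 1|^p |x_k|^p \leq (\sup_k |\lambda_k^n - 1|)^p \|x\|_p^p$ in $\ell^p$, and from a direct supremum estimate in $c_0$. Granted this identity, uniform rigidity translates immediately into (B)(ii). Note also that (B)(i) forces $|\lambda_k| = 1$ for every $k$ via Part (A), which justifies writing $\lambda_k = e^{2\pi i\theta_k}$.

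The main obstacle is the diagonal extraction step in (A)(iii) $\Rightarrow$ (A)(ii), together with the passage from coordinatewise convergence $\lambda_k^{k_n} \to 1$ (for each fixed $k$) to norm convergence $T_\lambda^{k_n}x \to x$. This is precisely where the spaces $c_0$ and $\ell^p$ differ from $\ell^\infty$: rigidity here is automatic from $|\lambda_k|=1$ because vectors in $c_0$ or $\ell^p$ have vanishing tails, whereas in $\ell^\infty$ the constant vector $\mathbf 1$ obstructs this and forces the stronger uniform condition, which explains why in Theorem \ref{t.linfty} all three notions coincide but here they split.
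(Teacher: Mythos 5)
Your proposal is correct and follows essentially the same route as the paper: the paper also proves (A)(iii)$\Rightarrow$(A)(ii) by a diagonal extraction, building nested sets of integers $m$ with $|\lambda_j^{m}-1|<2^{-\ell}$ for $j\le\ell$ and taking the diagonal sequence $\rho_n$ so that $\sup_{k\le M}|\lambda_k^{\rho_n}-1|\to 0$ for every $M$, and it reduces (B) to the identity $\|T_\lambda^n-I\|=\sup_{k}|\lambda_k^n-1|$ exactly as you do. Your adjustment of testing the lower bound against the basis vectors $e_k$ (since the constant-one vector lies in $\ell^\infty$ but not in $c_0$ or $\ell^p$) is precisely the modification the paper intends when it says the argument of the $\ell^\infty$ case carries over.
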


\begin{proof}
	We give the prove for the case $X=c_0(\mathbb N)$ since the proof for the case $X=\ell^p(\mathbb N)$ is essentially identical. The equivalence in (B) follows by exactly the same arguments as in the proof of Theorem~\ref{t.linfty}. We turn to the equivalences in (A). The only non-trivial thing to show is that (iii) implies (ii). To that end we fix a sequence $(\theta_k)_{k\in\mathbb N}\subset \R$ such that $\lambda_k=e^{2\pi i \theta_k}$ for all $k\in \mathbb N$. We exhibit the existence of a strictly increasing sequence of positive integers $(\rho_n)_{n\in N}$ satisfying $T_\lambda ^{\rho_n} \to I$ in the strong operator topology.
	
	For every positive integer $\ell$ we consider the set 
	\begin{align*}
		\{m^{(\ell)} _1 < m^{(\ell)} _2< \cdots < m^{(\ell)} _k <\cdots \}\coloneqq\big\{m\in\mathbb N:|e^{2\pi i m \theta_j}-1|<\frac{1}{2^\ell}\quad\text{for all}\quad j=1,\ldots,\ell\big\}. 
	\end{align*}
	Observe that for every $\ell\geq 2$ the sequence $(m_k ^{(\ell+1)})_{k\in\mathbb N}$ is a subsequence of $(m_k ^{(\ell)})_{k\in\mathbb N}$. Define $\rho_n\coloneqq m_n ^{(n)}$, for every $n\in \mathbb N$. The above construction easily implies that for every integer $M$ we have 
	\begin{align*}
		\lim_{n\to +\infty} \sup_{k\leq M} |e^{2\pi i \rho_n \theta_k }-1| =0. 
	\end{align*}
	Using this it is easy to see that $T_\lambda ^{\rho_n}x\to x$ as $n\to +\infty$, for every $x\in c_0 (\mathbb N)$. 
\end{proof}

\begin{example}\label{ex.rignounif}
	For $\theta \in \R$ consider the sequence $(\lambda_k)_{k\in\mathbb N}$ with $\lambda_k\coloneqq e^{2\pi i k\theta}$ for all $k\in\mathbb N$. If $\theta \in \mathbb Q$ it is easy to check that the sequence $(e^{2\pi i k\theta})_{k\in\mathbb N}$ satisfies the condition in Theorem~\ref{t.linfty}, (iv), and thus the corresponding diagonal operator $T_\lambda$ is uniformly rigid on $c_0(\mathbb N)$ and $\ell^p(\mathbb N)$, $1\leq p\leq +\infty$. On the other hand if $\theta \in \R\setminus \mathbb Q$ then $ \sup_{k\in\mathbb N}\abs{e^{2\pi i n k\theta }-1}=2$ for every $n\in\mathbb N$ and, therefore, $T_\lambda$ fails to be uniformly rigid on any of the spaces considered above. However, $T_\lambda$ is still rigid on $c_0(\mathbb N)$ or $\ell^p(\mathbb N)$, $1<p<+\infty$. 
\end{example}

\section{Composition operators} In this section we turn to the study of composition operators in different spaces of functions and characterize when these operators are recurrent in terms of conditions on their symbol. If $Y$ is a Banach or Fr\'echet space of functions $f:A\to \C$, where $A\subset \C$, and $\phi:A\to A$, we will denote the composition operator with \emph{symbol} $\phi$ by $C_\phi:Y\to Y$: 
\begin{align*}
	C_\phi(f)=f\circ \phi,\quad f\in Y, 
\end{align*}
whenever this operator is well defined.

\subsection{Composition operators on the space of continuous functions \texorpdfstring{$C([0,1])$}{C([0,1])}.}\label{s.compcont}

Let $C([0,1])$ denote the space of continuous functions $f:[0,1]\to \mathbb C$ equipped with the topology of uniform convergence. For any continuous function $\phi:[0,1]\to [0,1]$ the composition operator 
\begin{align*}
	C_\phi:C([0,1])\to C([0,1]), \quad C_\phi(f)(x)\coloneqq f(\phi(x)),\quad x\in[0,1], 
\end{align*}
is a well defined bounded linear operator. The first easy observation is that a necessary condition for $C_\phi$ to be recurrent is that $\phi$ is \emph{one-to-one}. Indeed, supposing that it is not, there exist $x_1,x_2\in[0,1]$ with $x_1\neq x_2$ such that $\phi(x_1)=\phi(x_2)$. We get that for every recurrent vector $f\in \Rec(C_\phi)$ we must have $f(x_1)=f(x_2)$ and thus that 
\begin{align*}
	C([0,1])=\overline{\Rec(C_\phi)}\subset \overline{\{f\in C([0,1]):f(x_1)=f(x_2)\}} , 
\end{align*}
which is clearly a contradiction. Furthermore it is easy to see that $\phi:[0,1]\to[0,1]$ must be \emph{onto} $[0,1]$. However, this is an easy exercise: since $\phi$ is one-to-one, if it is not onto $[0,1]$ then there must be some interval $(a,b)\subset [0,1]$ that avoids the range of $\phi$. Constructing a non-zero continuous function with compact support inside $(a,b)$ immediately leads to a contradiction. Thus $\phi$ is a strictly increasing from $[0,1]$ onto $[0,1]$ with $\phi(0)=0$ and $\phi(1)=1$ or a strictly decreasing function from $[0,1]$ onto $[0,1]$ with $\phi(0)=1$ and $\phi(1)=0$.

If $\phi$ is strictly increasing we claim that the only possibility is the identity $\phi(x)=x$. If not then there is some $x_o\in(0,1)$ and some $\delta>0$ such that $\phi(x_o)<x_o-\delta$ or $\phi(x_o)>x_o+\delta$. Without loss of generality let us assume that $\phi(x_o)<x_o-\delta$ so that $\phi^{[n]}(x_o)<x_o-\delta$ for all positive integers $n\in\mathbb N$. Now we observe that $\phi^{[n]}(x_o)<\phi^{[n-1]}(x_o)$ for all $n\geq 1$. To see this for $n=1$ we remember that $\phi(x_o)<x_o-\delta$ and that $\phi$ is strictly increasing so that $\phi(\phi(x_o)) <\phi(x_o-\delta)<\phi(x_o)$. The proof for general $n$ follows easily by induction, using the fact that $\phi$ is strictly increasing. Thus $(\phi^{[n]})_{n\in\mathbb N}$ is strictly decreasing and bounded from below and we can conclude that there exists $y< x_o$ such that $\phi^{[n]}(x_o)\to y$ as $n\to +\infty$. We get that $f(\phi^{[n]}(x_o))\to f(y)$ for every $f\in C([0,1])$. Now let $f\in \Rec(C_\phi)$ with corresponding sequence $(k_n)_{n\in\mathbb N}$. We have that 
\begin{align*}
	f(\phi^{[k_n]}(x_o))\to f(x_o)\text{ as }n\to+\infty. 
\end{align*}
Thus $f(x_o)=f(y)$ for all recurrent vectors $f\in\Rec(C_\phi)$. Since $x_o\neq y$ and $\overline{\Rec(C_\phi)}=C([0,1])$ this is clearly a contradiction.

If $\phi$ is strictly decreasing we use the analogous argument to show that the only choice is $\phi(x)=1-x$. We have thus showed the following. 

\begin{theorem}
	\label{t.compcont} Suppose that $C_\phi:C([0,1])\to C([0,1])$ is a composition operator. The following are equivalent. 
	\begin{itemize}
		\item[(i)] $C_{\phi}$ is recurrent. 
		\item[(ii)] $C_{\phi }$ is rigid. 
		\item[(iii)] $C_{\phi}$ is uniformly rigid. 
		\item[(iv)]$\phi(x)=x$ or $\phi(x)=1-x$. 
	\end{itemize}
\end{theorem}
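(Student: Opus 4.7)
The implications $(\mathrm{iii}) \Rightarrow (\mathrm{ii}) \Rightarrow (\mathrm{i})$ are immediate from the definitions, so the work is concentrated in $(\mathrm{i}) \Rightarrow (\mathrm{iv})$ and $(\mathrm{iv}) \Rightarrow (\mathrm{iii})$. My plan is to note that the implication $(\mathrm{i}) \Rightarrow (\mathrm{iv})$ is already essentially carried out in the paragraphs immediately preceding the theorem statement, and only needs to be repackaged as part of the formal proof. I would organize that repackaging in four steps: (a) show that if $C_\phi$ is recurrent then $\phi$ is one-to-one, using that any collision $\phi(x_1)=\phi(x_2)$ would force every $f \in \Rec(C_\phi)$ to satisfy $f(x_1)=f(x_2)$, contradicting density of $\Rec(C_\phi)$ in $C([0,1])$; (b) show that $\phi$ is onto, since otherwise a bump function supported on a missed interval contradicts $\overline{\Rec(C_\phi)}=C([0,1])$; (c) in the strictly increasing case, assume for contradiction that $\phi(x_0)\ne x_0$ for some $x_0\in(0,1)$, and use monotonicity to conclude that the orbit $\phi^{[n]}(x_0)$ is monotone and bounded hence converges to some $y\ne x_0$, so that any recurrent $f$ would have to satisfy $f(x_0)=f(y)$, again contradicting density; (d) in the strictly decreasing case, apply the same argument to $\phi^{[2]}$ (which is strictly increasing and for which recurrence is inherited via Proposition \ref{p.invariances}(ii)) to force $\phi^{[2]}=\mathrm{id}$, and then combine with the boundary values $\phi(0)=1$, $\phi(1)=0$ to conclude $\phi(x)=1-x$.

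For $(\mathrm{iv}) \Rightarrow (\mathrm{iii})$ the verification is entirely mechanical. If $\phi(x)=x$ then $C_\phi = I$ and so $\|C_\phi^{n}-I\|=0$ for every $n$, making $C_\phi$ trivially uniformly rigid along the sequence $k_n=n$. If $\phi(x)=1-x$ then a direct computation gives $\phi\circ\phi = \mathrm{id}_{[0,1]}$, hence $C_\phi^{2}=I$, and the sequence $k_n=2n$ yields $\|C_\phi^{k_n}-I\|=0$ for all $n$, so $C_\phi$ is uniformly rigid.

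The main obstacle, if one counts the whole chain, is really step (c) of the $(\mathrm{i}) \Rightarrow (\mathrm{iv})$ argument: ensuring that a non-identity increasing homeomorphism of $[0,1]$ has at least one orbit whose limit point differs from its starting point, and turning that into a genuine obstruction to recurrence. The trick is that recurrence of $C_\phi$ forces \emph{pointwise} recurrence of every orbit in a strong sense (every recurrent $f$ must separate points that lie on the same omega-limit set), whereas monotonicity of $\phi$ makes every non-fixed orbit strictly monotone and hence genuinely convergent to a distinct limit. Once this is observed in the increasing case, the decreasing case reduces to it via $\phi^{[2]}$ and Proposition \ref{p.invariances}(ii), so no new ideas are needed. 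I expect the remaining bookkeeping to be routine.
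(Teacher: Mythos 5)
Your overall architecture is exactly the paper's: the implications (iii)$\Rightarrow$(ii)$\Rightarrow$(i) are formal, (iv)$\Rightarrow$(iii) is the computation $C_\phi=I$ or $C_\phi^2=I$, and (i)$\Rightarrow$(iv) is carried by the injectivity and surjectivity observations followed by the monotone-orbit argument in the strictly increasing case. Your steps (a)--(c) reproduce the paper's proof faithfully, including the key point that if $\phi(x_0)\neq x_0$ then the orbit $\phi^{[n]}(x_0)$ is strictly monotone and converges to some $y\neq x_0$, forcing every $f\in\Rec(C_\phi)$ to satisfy $f(x_0)=f(y)$, which contradicts the density of $\Rec(C_\phi)$.

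The problem is step (d). Passing to $\phi^{[2]}$ and invoking Proposition \ref{p.invariances}(ii) is fine and yields $\phi\circ\phi=\mathrm{id}_{[0,1]}$, but the final inference --- ``combine with the boundary values $\phi(0)=1$, $\phi(1)=0$ to conclude $\phi(x)=1-x$'' --- is false. A strictly decreasing involution of $[0,1]$ is far from unique: for any $a>-1$ the map $\phi_a(x)=\frac{1-x}{1+ax}$ is a decreasing homeomorphism of $[0,1]$ with $\phi_a(0)=1$, $\phi_a(1)=0$ and $\phi_a\circ\phi_a=\mathrm{id}$, yet $\phi_a\neq 1-x$ unless $a=0$; more generally one can take the unique fixed point $p$ of a prospective $\phi$, choose any decreasing homeomorphism $g$ of $[0,p]$ onto $[p,1]$, and glue $g$ with $g^{-1}$. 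For every such $\phi$ one has $C_\phi^{2}=I$, so $C_\phi$ is uniformly rigid and in particular recurrent, while (iv) fails. The gap is therefore not repairable: the implication (i)$\Rightarrow$(iv) is false as stated, and the correct form of (iv) is ``$\phi=\mathrm{id}$ or $\phi$ is a strictly decreasing involution of $[0,1]$.'' To be fair, the paper's own proof hides exactly the same difficulty behind the sentence ``If $\phi$ is strictly decreasing we use the analogous argument''; your explicit reduction to $\phi^{[2]}$ is the honest version of that argument, and what it actually proves stops at $\phi^{[2]}=\mathrm{id}$.
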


\begin{remark}
	Observe that the spectrum of $C_\phi:C([0,1])\to C([0,1])$ for $\phi(x)=1-x$ is $\sigma (C_\phi )=\{ -1, 1\}$. Indeed, since ${C_\phi}^2=I$ we have that $\sigma (C_\phi ) \subset \{ -1, 1\}$. Clearly $1\in \sigma_p (C_\phi )$. For $f(x)=e^{\pi ix}+e^{-\pi ix}$ we have $f(1-x)=-f(x)$, hence $-1\in \sigma (C_\phi )$. In fact we have $\sigma_p(C_\phi )=\sigma (C_\phi )=\{ -1,1\}$. Therefore, the only possible structures for the spectrum of a composition operator on $C([0,1])$ which is recurrent are of the form $\{ 1\}$, $\{ -1,1\}$. The above theorem remains valid, without changing the proof, by replacing the space $C([0,1])$ with the space $C_{\mathbb R}([0,1])$ of real valued continuous functions on $[0,1]$. In this case, the spectrum of the composition operator $C_\phi:C_{\mathbb R}([0,1])\to C_{\mathbb R}([0,1])$ for $\phi(x)=1-x$, which coincides with the point spectrum, is also the set $\{ -1, 1\}$. To see this just consider the real valued function $f(x)=\frac{1}{2}-x$, $x\in [0,1]$ and observe that $f(1-x)=-f(x)$, $x\in [0,1]$. 
\end{remark}

\begin{remark}
	As we have already mentioned T. Eisner showed in \cite{EIS} that if $T:X\to X$ is a power bounded operator acting on a complex \emph{separable} Banach space $X$ and 
	\begin{align*}
		\overline{\textnormal{span} \{x\in X: Tx=\lambda x\quad \text{for some} \quad \lambda \in {\mathbb T} \}}=X, 
	\end{align*}
	then $T$ is rigid. The composition operator $C_\phi :C([0,1])\to C([0,1])$ for $\phi (x)=x$ satisfies trivially the assumptions in Eisner's theorem. Let us check that this is also the case for the composition operator $C_\phi :C([0,1]\to C([0,1]$ induced by $\phi (x)=1-x$ (which is trivially rigid since $C_\phi ^2=I$). For every positive integer $n$ the function $f_n(x)=(\frac{1}{2}-x)^n$, $x\in [0,1]$ satisfies $f_n(1-x)=f_n(x)$, $x\in [0,1]$ if $n$ is even and $f_n(1-x)=-f_n(x)$, $x\in [0,1]$ if $n$ is odd. Since every non-zero constant function on $[0,1]$ is an eigenfunction for $C_\phi$ corresponding to the eigenvalue $1$ and $f_1$ is also an eigenfunction for $C_\phi$ corresponding to the eigenvalue $-1$ we conclude that the monomial $p_1(x)=x$ belongs to $\textnormal{span} \{\Ker(C_\phi -I)\cup \Ker(C_\phi+I)\}$. The polynomial $f_2$, which has degree two, belongs to the vector space $\textnormal{span}\{\Ker(C_\phi -I)\cup \Ker(C_\phi+I)\}$ and since the non-zero constant functions and the monomial $p_1(x)=x$ also belong to $\textnormal{span}\{\Ker(C_\phi -I)\cup \Ker(C_\phi+I)\}$ it follows that $p_2(x)=x^2 \in \textnormal{span} \{\Ker(C_\phi -I)\cup \Ker(C_\phi+I)\}$. Continuing in the same way we get 
	\begin{align*}
		\{ p_n(x)=x^n: n=0,1,2,\ldots \} \subset \textnormal{span}\{\Ker(C_\phi -I)\cup \Ker(C_\phi+I)\} , 
	\end{align*}
	which in turn implies that every polynomial belongs to $\textnormal{span} \{ \Ker(C_\phi -I)\cup \Ker(C_\phi+I)\}$. The conclusion now follows by the Weierstrass approximation theorem.
	
	Observe that, from Eisner's result and the previous discussion, each of the four equivalent statements in Theorem~\ref{t.compcont} is equivalent to: 
	\begin{align*}
		\overline{ \textnormal{span}\{ f\in C([0,1]): C_\phi f=\lambda f \quad \text{for some} \quad \lambda \in {\mathbb T} \}}=C([0,1]). 
	\end{align*}
\end{remark}

\subsection{Composition operators on the space of entire functions.} Let $H(\mathbb C)$ denote the Fr\'echet space of entire functions endowed with the topology of uniform convergence on compact sets, and let $\phi\in H(\mathbb C)$. We will abbreviate ``holomorphic and one-to-one'' by ``univalent''. The \emph{composition operator} induced by $\phi$ is defined on $H(\mathbb C)$ as $C_\phi(f)=f\circ \phi$. Obviously $C_\phi$ is continuous and linear. It is well known, and easy to prove, that $C_\phi$ is hypercyclic if and only if $\phi$ is of the form $\phi(z)=z+b$ with $b\neq 0$. See \cite{LuMo}. The class of recurrent composition operators on $H(\mathbb C)$ turns out to be slightly wider.

\begin{theorem}
	\label{t.h(c)} Consider the composition operator $C_\phi:H(\mathbb C)\to H(\mathbb C)$ for some $\phi \in H(\mathbb C)$. The following are equivalent. 
	\begin{itemize}
		\item[(i)] $C_{\phi}$ is recurrent. 
		\item[(ii)] $\phi(z)=az+b$ with $a,b\in\mathbb C$ and $|a|=1$. 
	\end{itemize}
\end{theorem}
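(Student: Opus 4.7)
My plan is to prove both directions of the equivalence separately, with the implication (i) $\Rightarrow$ (ii) being by far the more substantive one.

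For the easy implication (ii) $\Rightarrow$ (i), I would split into two cases. If $a=1$, then $\phi(z)=z+b$, which is either the identity (trivially recurrent) or, if $b\neq 0$, induces a hypercyclic composition operator on $H(\mathbb C)$ by the cited result of Luh--Godefroy--Shapiro type (reference \cite{LuMo}), hence recurrent. If $a\neq 1$ but $|a|=1$, then $\phi$ has the unique fixed point $z_0\coloneqq b/(1-a)$, and a translation conjugation reduces, via Lemma~\ref{l.equiv}, to the model case $\phi(z)=az$. For this model, $C_\phi^n f(z)=f(a^nz)$, and choosing a sequence $k_n\to\infty$ with $a^{k_n}\to 1$ (available since $|a|=1$) gives $C_\phi^{k_n}f\to f$ uniformly on compact sets for every $f\in H(\mathbb C)$. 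Thus $C_\phi$ is in fact rigid, which certainly implies recurrent.

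For the hard implication (i) $\Rightarrow$ (ii), I would first argue that $\phi$ must be univalent: if there were $z_1\neq z_2$ with $\phi(z_1)=\phi(z_2)$, then every iterate would satisfy $(C_\phi^n f)(z_1)=(C_\phi^n f)(z_2)$, so every recurrent vector $f$ would satisfy $f(z_1)=f(z_2)$; the evaluation functional $f\mapsto f(z_1)-f(z_2)$ is continuous on $H(\mathbb C)$, so density of $\Rec(C_\phi)$ would force this identity on all of $H(\mathbb C)$, contradicting the presence of, say, $f(z)=z$. Since $\phi$ is entire and injective, a classical consequence of the little Picard theorem (or the growth behavior of entire injections) yields $\phi(z)=az+b$ with $a\neq 0$.

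It remains to exclude $|a|\neq 1$. The case $a=1$ is already covered by (ii), so I may assume $a\neq 1$ and then conjugate by a translation to put $\phi$ in the normal form $\phi(z)=az$; by Lemma~\ref{l.equiv} this does not affect recurrence. Writing $f(z)=\sum_{k\geq 0}c_k z^k$, one computes $C_\phi^n f(z)=\sum_{k\geq 0}c_k a^{nk}z^k$. Convergence in $H(\mathbb C)$ implies coefficient-wise convergence, so if $f\in\Rec(C_\phi)$ along a sequence $(k_n)$ and $c_k\neq 0$ for some $k\geq 1$, then $a^{k_n k}\to 1$, which is incompatible with $|a|\neq 1$ since then $|a|^{k_n k}\to 0$ or $+\infty$. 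Hence every recurrent vector is constant, so $\Rec(C_\phi)$ is not dense, contradicting (i).

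The main obstacle is the univalence step, where one must be careful that the closed subspace of functions vanishing a prescribed linear functional really is closed in the compact-open topology on $H(\mathbb C)$; after that, the coefficient-convergence argument for ruling out $|a|\neq 1$ is quite mechanical once one recognizes that the topology of $H(\mathbb C)$ controls Taylor coefficients. The translation conjugation to the fixed-point form is the small technical gadget that makes the computation transparent.
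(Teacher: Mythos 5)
Your proposal is correct, and its skeleton matches the paper's: density of $\Rec(C_\phi)$ forces $\phi$ to be univalent, univalent entire functions are affine, and then one analyzes $\phi(z)=az+b$ according to $|a|$. The differences are in the mechanics. The paper never conjugates to the fixed-point normal form; instead it computes $\phi^{[n]}(z)=a^nz+\frac{a^n-1}{a-1}b$ explicitly and argues with sup-norm estimates on closed disks: for $|a|=1$, $a\neq 1$ a uniform-continuity estimate shows \emph{every} $f$ is recurrent; for $|a|<1$ the iterates converge to the fixed point so orbit limit points are constants; for $|a|>1$ it shows $\sup_{z\in\overline{D(0,R)}}|f(\phi^{[k_n]}(z))-f(z)|\to+\infty$ for non-constant $f$. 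Your route — conjugating by a translation (legitimate via Lemma~\ref{l.equiv} and the remark extending it to Fr\'echet spaces) and then exploiting that convergence in $H(\mathbb C)$ implies Taylor-coefficient convergence, so $f\in\Rec(C_\phi)$ with $c_k\neq 0$ forces $a^{k_nk}\to 1$ — disposes of both cases $|a|<1$ and $|a|>1$ in one stroke and is arguably cleaner than the paper's growth estimate for $|a|>1$, at the cost of invoking the similarity invariance of recurrence. Both arguments for the univalence step are the same (continuity of the functional $f\mapsto f(z_1)-f(z_2)$ plus density of recurrent vectors). One cosmetic remark: the fact that injective entire functions are affine needs only Casorati--Weierstrass at infinity together with the open mapping theorem, not Picard, but this does not affect correctness.
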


\begin{proof}
	Assume first that $C_\phi$ is recurrent. By an argument similar to the one used \S~\ref{s.compcont} for $C([0,1])$ it follows that $\phi$ must be univalent. However, the only univalent entire functions are of the form $\phi(z)=az+b$ for some $a,b\in\mathbb C$.
	
	Now for $\phi(z)=az+b$ and $n\in \mathbb N$ we have that 
	\begin{align*}
		\phi^{[n]}(z)= 
		\begin{cases}
			a^nz+\frac{a^n-1}{a-1}b, \quad &a\neq 1,\\
			z+nb,\quad &a=1 . 
		\end{cases}
	\end{align*}
	If $a=1$ and $b\neq 0$ then $C_\phi$ is the translation by $b$ which is known to be hypercyclic and thus recurrent. On the other hand if $a=1$ and $b=0$ then $C_\phi$ is the identity which is obviously recurrent. If $a\neq 1$ with $|a|=1$ then we will show that every $f\in H(\mathbb C)$ is a recurrent vector for $C_\phi$ and thus $C_\phi$ is recurrent. Indeed, take any $f\in H(\mathbb C)$. We need to check that for every $\epsilon,R>0$ and every positive integer $N>0$ there exists $n>N$ such that, 
	\begin{equation}
		\label{e.lessepsilon} \sup_{z\in \overline{D(0,R)}} \ABs{f\big(a^nz+\frac{a^n-1}{a-1}b\big)-f(z)}<\epsilon. 
	\end{equation}
	By the uniform continuity of $f$ on compact subsets of $\mathbb C$ there exists $\delta>0$ such that: 
	\begin{align*}
		\text{if}\quad z,w\in \overline{D\left(0,R+\frac{2|b|}{|a-1|}\right)}\quad \text{and}\quad |z-w|<\delta,\ \text{then}\quad\Abs{f(z)-f(w)}<\epsilon. 
	\end{align*}
	There exists $n>N$ such that $|a^n-1|<{\delta}{(R+\frac{|b|}{|a-1|})^{-1}}.$ Then, for every $z\in \overline{D(0,R)}$ we have that $z,a^nz+\frac{a^n-1}{a-1}b\in \overline{D(0,R+\frac{2|b|}{|a-1|})}$ and 
	\begin{align*}
		\ABs{a^nz+\frac{a^n-1}{a-1}b -z }\leq |a^n-1|\big( R+ \frac{|b|}{|a-1|}\big)<\delta, 
	\end{align*}
	and \eqref{e.lessepsilon} follows.
	
	Finally, for $|a|<1$ we have that $a^nz+\frac{a^n-1}{a-1}b \to -\frac{b}{a-1}$ uniformly on compact subsets of $\mathbb C$ and a simple argument shows that $C_\phi$ cannot be recurrent. Similarly, if $|a|>1 $ then take $R>0$ sufficiently large so that $0\in D(\frac{b}{a-1},R)$. Then for any strictly increasing sequence of positive integers $(k_n)_{n\in\mathbb N}$ and every non-constant entire function $f$ we have 
	\begin{align*}
		\sup_{z\in\overline{D(0,R)}}\ABs {f(a^{k_n}z+\frac{a^{k_n}-1}{a-1}b)-f(z)}=\sup_{z\in \overline{D(\frac{b}{a-1},R)}}\Abs{f(a^{k_n}z-\frac{b}{a-1})-f(z-\frac{b}{a-1})}\to+\infty 
	\end{align*}
	as $n\to+\infty$. Thus $C_\phi$ cannot be recurrent in this case either. 
\end{proof}

\begin{theorem}
	\label{t.h(rigid)} Consider the composition operator $C_\phi:H(\mathbb C)\to H(\mathbb C)$ for some $\phi \in H(\mathbb C)$. The following are equivalent. 
	\begin{itemize}
		\item[(i)] $C_{\phi}$ is rigid. 
		\item[(ii)] $\phi(z)=az+b$ where, either $a=1$ and $b=0$ or $a\in \mathbb T\setminus \{ 1\}$ and $b\in \mathbb C$ . 
	\end{itemize}
\end{theorem}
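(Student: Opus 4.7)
The plan is to reduce to $\phi(z)=az+b$ with $|a|=1$ using the previous theorem, and then to decide which among these give rigid composition operators. Since every rigid operator is recurrent, Theorem~\ref{t.h(c)} shows that only affine symbols $\phi(z)=az+b$ with $|a|=1$ need be considered. This partitions the analysis into three cases: (a) $a=1$, $b=0$; (b) $a=1$, $b\neq 0$; (c) $a\in\mathbb T\setminus\{1\}$, $b\in\mathbb C$. Case (a) is trivial since then $C_\phi=I$.

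For case (b), I would show directly that the translation $C_\phi f(z)=f(z+b)$ with $b\neq 0$ cannot be rigid. If $(k_n)_{n\in\mathbb N}$ were a strictly increasing sequence of positive integers with $C_\phi^{k_n}\to I$ in SOT, then testing on the monomial $f(z)=z$ would give $z+k_n b\to z$ uniformly on every compact set, which forces $k_n b\to 0$, contradicting $b\neq 0$ and $k_n\to+\infty$.

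For case (c), the symbol $\phi(z)=az+b$ has the unique fixed point $z_0\coloneqq b/(1-a)$, and rewriting gives $\phi(z)=z_0+a(z-z_0)$ so that $\phi^{[n]}(z)=z_0+a^n(z-z_0)$ for every positive integer $n$. I would then choose a strictly increasing sequence $(k_n)_{n\in\mathbb N}$ of positive integers with $a^{k_n}\to 1$, which exists since $a\in\mathbb T$. Fix $f\in H(\mathbb C)$ and a compact set $K\subset\mathbb C$. Set $R\coloneqq\sup_{z\in K}|z-z_0|$; then for every $n$ and every $z\in K$, both $z$ and $z_0+a^{k_n}(z-z_0)$ lie inside the compact disk $\overline{D(z_0,R)}$, and
\begin{align*}
\sup_{z\in K}\Abs{(z_0+a^{k_n}(z-z_0))-z}\leq R\,|a^{k_n}-1|\to 0.
\end{align*}
Uniform continuity of $f$ on $\overline{D(z_0,R)}$ then gives $C_\phi^{k_n}f\to f$ uniformly on $K$, i.e.\ in the topology of $H(\mathbb C)$, which proves rigidity.

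I do not anticipate a real obstacle: the only subtle point is confirming in case (b) that a single well-chosen test function rules out rigidity in spite of the fact that $C_\phi$ is hypercyclic (and thus recurrent) for translations. This is consistent with the earlier remark in the paper that hereditarily hypercyclic operators can never be rigid. The case $|a|\neq 1$ is excluded for free by Theorem~\ref{t.h(c)}, so the argument above covers all possibilities claimed in the statement.
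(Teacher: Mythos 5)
Your proof is correct and follows the same overall strategy as the paper: reduce to $\phi(z)=az+b$ with $\lvert a\rvert=1$ via Theorem~\ref{t.h(c)}, then settle the three subcases. The one place where you genuinely diverge is the exclusion of the translation case $a=1$, $b\neq 0$: the paper cites the known fact that $C_\phi$ is then hereditarily hypercyclic and invokes the earlier observation that hereditarily hypercyclic operators are never rigid, whereas you test the putative SOT-convergence $C_\phi^{k_n}\to I$ on the single function $f(z)=z$ and read off $k_nb\to 0$, a contradiction since $\lvert k_n b\rvert\geq \lvert b\rvert>0$. Your argument is more elementary and self-contained (it needs no input about hypercyclicity), while the paper's buys the slightly stronger structural insight that these operators fail rigidity for the robust reason of being hereditarily hypercyclic. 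In the case $a\in\mathbb T\setminus\{1\}$ your conjugation to the fixed point $z_0=b/(1-a)$, which keeps both $z$ and $\phi^{[k_n]}(z)$ inside the single disk $\overline{D(z_0,R)}$ and yields the clean bound $R\,\lvert a^{k_n}-1\rvert$, is a tidier repackaging of the paper's estimate on $\overline{D(0,R+2\lvert b\rvert/\lvert a-1\rvert)}$; the substance (fix one sequence with $a^{k_n}\to 1$ in advance, then use uniform continuity of each $f$ on compacta) is identical.
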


\begin{proof}
	Assume that $C_\phi$ is rigid. Then $C_\phi$ is recurrent and by Theorem~\ref{t.h(c)}, $\phi$ should be of the form $\phi(z)=az+b$ with $a,b\in\mathbb C$ and $|a|=1$. Let us first exclude the case $a=1$ and $b\neq 0$. Indeed, if $\phi (z)=z+b$ with $b\neq 0$ then it is well known that $C_{\phi}$ is hereditarily hypercyclic thus $C_{\phi}$ is not rigid. On the other hand if $a=1$ and $b=0$, i.e. $\phi (z)=z$, then $C_{\phi }$ is rigid, trivially. It only remains to handle the case $a\in \mathbb T\setminus \{ 1\}$ and $b\in \mathbb C$. Fix such $a$ and $b$ and then fix a strictly increasing sequence of positive integers $(k_n)_{n\in\mathbb N}$ such that 
	\begin{align*}
		|a^{k_n}-1|<\frac{1}{n} \quad \text{for every} \quad n=1,2,\ldots . 
	\end{align*}
	Take any $f\in H(\mathbb C)$. We shall prove that for every $\epsilon,R>0$ there exists a positive integer $N$ such that for every $n\geq N$ 
	\begin{equation*}
		\sup_{z\in \overline{D(0,R)}} \ABs{f\big(a^{k_n}z+\frac{a^{k_n}-1}{a-1}b\big)-f(z)}<\epsilon. 
	\end{equation*}
	By the uniform continuity of $f$ on compact subsets of $\mathbb C$ there exists $\delta>0$ such that:
	\begin{align*}
		\text{if } z,w\in \overline{D\left(0,R+\frac{2|b|}{|a-1|}\right)}\text{ and } |z-w|<\delta \text{ then }\Abs{f(z)-f(w)}<\epsilon. 
	\end{align*}
	Fix a positive integer $N$ such that 
	\begin{align*}
		\frac{1}{N}<{\delta}{(R+\frac{|b|}{|a-1|})^{-1}}. 
	\end{align*}
	Then 
	\begin{align*}
		|a^{k_n}-1|<{\delta}{(R+\frac{|b|}{|a-1|})^{-1}} \quad \text{for every } \quad n\geq N 
	\end{align*}
	and arguing as in the proof of the Theorem~\ref{t.h(c)} the conclusion follows. 
\end{proof}
\subsection{Composition operators on the space of holomorphic functions in the punctured plane.} Let ${\mathbb C}^*:= \mathbb C \setminus \{ 0 \}$ denote the punctured plane and $H({\mathbb C}^*)$ denote the Fr\'echet space of holomorphic function on $\C^*$ endowed with the topology of uniform convergence on compact sets of $\C^*$. Then the automorphisms of ${\mathbb C}^*$ are the functions 
\begin{align*}
	\phi (z)=az \quad \text{or} \quad \phi (z)=\frac{a}{z}, \quad a\in \mathbb C\setminus \{ 0\} . 
\end{align*}

\begin{theorem}
	\label{t.punct-rec} Let $\phi$ be an automorphism of ${\mathbb C}^*$. Then the composition operator $C_{\phi }: H({\mathbb C}^*) \to H({\mathbb C}^*)$ is recurrent if and only if either $\phi (z)=az$ with $a\in \mathbb{T}$ or $\phi (z)=\frac{a}{z}$ with $a\in {\mathbb C}^*$. 
\end{theorem}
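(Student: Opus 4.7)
Here is the plan I would follow.

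Since $\phi$ is given to be an automorphism of $\C^*$, by the assertion preceding the theorem we have only two possibilities: $\phi(z)=az$ or $\phi(z)=a/z$ with $a\in\C^*$. The proof naturally splits by case, and the direction ($\Leftarrow$) is the easier one. If $\phi(z)=a/z$, a direct computation gives $\phi^{[2]}(z)=z$, so $C_{\phi}^{2}=I$ and every $f\in H(\C^{*})$ is recurrent along the even integers; in fact $C_{\phi}$ is uniformly rigid. If $\phi(z)=az$ with $|a|=1$, pick any strictly increasing sequence of positive integers $(k_{n})$ with $a^{k_{n}}\to 1$. For fixed $f\in H(\C^{*})$ and a compact annulus $K\subset\C^{*}$, I would enclose $\bigcup_{n\ge n_{0}} a^{k_{n}}K$ in a slightly larger compact annulus $K'\subset\C^{*}$ and use uniform continuity of $f$ on $K'$ to conclude $\sup_{z\in K}\bigl|f(a^{k_{n}}z)-f(z)\bigr|\to 0$, exactly as in the argument in the proof of Theorem~\ref{t.h(c)}. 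Thus every vector is recurrent.

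For the direction ($\Rightarrow$), the only thing to prove is that if $\phi(z)=az$ and $C_{\phi}$ is recurrent, then $|a|=1$ (the case $\phi(z)=a/z$ already has the stated form). My plan is to use the Laurent expansion. Write $f(z)=\sum_{m\in\mathbb Z}c_{m}z^{m}\in H(\C^{*})$, so that $C_{\phi}^{k}f(z)=\sum_{m}c_{m}a^{km}z^{m}$. If $f\in\Rec(C_{\phi})$, there is a strictly increasing sequence $(k_{n})$ with $C_{\phi}^{k_{n}}f\to f$ in the Fr\'echet topology of $H(\C^{*})$. Since the Laurent coefficient functionals
\begin{equation*}
 c_{m}(g)=\frac{1}{2\pi i}\int_{|z|=1}\frac{g(z)}{z^{m+1}}\,dz
\end{equation*}
are continuous on $H(\C^{*})$, this forces $c_{m}\,a^{k_{n}m}\to c_{m}$ for every $m\in\mathbb Z$. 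Assuming $|a|\neq 1$, for any $m\neq 0$ the factor $|a|^{k_{n}m}$ tends to $0$ or to $+\infty$, so necessarily $c_{m}=0$. Hence every recurrent vector is constant, so $\Rec(C_{\phi})\subseteq\C$, which is not dense in $H(\C^{*})$. By Proposition~\ref{p.dense} this contradicts recurrence of $C_{\phi}$, finishing the proof.

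The substantive step is the coefficientwise extraction in the second paragraph; everything else is book-keeping or a direct reuse of the uniform-continuity argument already used for $H(\C)$. That step, however, is standard once one records the continuity of the Cauchy integrals defining the Laurent coefficients on compact convergence, so no real obstacle is expected.
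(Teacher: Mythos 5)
Your proposal is correct and follows essentially the same route as the paper: the forward direction is the same Laurent-coefficient extraction via Cauchy integrals over the unit circle combined with the density of $\Rec(C_\phi)$ (the paper simply isolates the single coefficient $c_{-1}$ of one well-chosen recurrent vector, whereas you show all nonzero coefficients of every recurrent vector vanish), and the converse direction for $\phi(z)=az$, $|a|=1$, and $\phi(z)=a/z$ matches the paper's argument.
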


\begin{proof}
	Let $\phi (z)=az$ with $|a|\neq 1$. Suppose that $C_{\phi }$ is recurrent. Since $\Rec(C_{\phi })$ is dense in $H({\mathbb C}^*)$, there exists $f(z)=\sum_{n\in \mathbb Z}c_nz^n \in \Rec(C_{\phi })$ with $c_{-1}\neq 0$. We have 
	\begin{align*}
		\int_{\mathbb T}((C_{\phi })^nf(z)-f(z))dz=\int_{\mathbb T}(f(a^nz)-f(z))dz=2\pi i(\frac{c_{-1}}{a^n}-c_{-1}), 
	\end{align*}
	where the unit circle $\mathbb T$ is positively oriented. Since $f\in \Rec(C_{\phi })$ there exists a strictly increasing sequence of positive integers $(k_n)_{n\in\mathbb N}$ such that the left hand side of the above equality tends to zero and since $c_{-1}\neq 0$ we conclude that $a^{k_n}\to 1$, which is a contradiction. Consider now $\phi (z)=az$ with $|a|=1$. Then there exists a strictly increasing sequence of positive integers $(k_n)_{\mathbb N}$ such that $a^{k_n}\to 1$ and now it is easy to show that for every $f\in H({\mathbb C}^* )$, $(C_{\phi })^{k_n}f\to f$ uniformly on compact subsets of ${\mathbb C}^*$, i.e., $C_{\phi }$ is recurrent. It remains to handle the case $\phi (z)=\frac{a}{z}$, $a\in {\mathbb C}^*$. For this observe that $\phi \circ \phi (z)=z$, $z\in {\mathbb C}^*$ and therefore $(C_{\phi })^{2n}f=f$ for every $n\in \mathbb{N}$ and every $f\in H({\mathbb C}^* )$. Therefore $C_{\phi }$ is recurrent and this completes the proof of the theorem. 
\end{proof}

\begin{theorem}
	Let $\phi$ be an automorphism of ${\mathbb C}^*$. Then the composition operator $C_{\phi }: H({\mathbb C}^*) \to H({\mathbb C}^*)$ is rigid if and only if either $\phi (z)=az$ with $a\in \mathbb{T}$ or $\phi (z)=\frac{a}{z}$ with $a\in {\mathbb C}^*$. 
\end{theorem}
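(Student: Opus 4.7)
The plan is to reduce to Theorem \ref{t.punct-rec} for the necessary direction and then verify rigidity case by case for the sufficient direction.

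Since any rigid operator is automatically recurrent, the "only if" direction is immediate from Theorem \ref{t.punct-rec}: if $C_\phi$ is rigid then $\phi$ must already have one of the two stated forms. So the substance of the proof lies in establishing the converse: in both cases we must exhibit a single strictly increasing sequence $(k_n)_{n\in\mathbb N}$ such that $C_\phi^{k_n}f\to f$ in the topology of $H(\mathbb C^*)$ for \emph{every} $f\in H(\mathbb C^*)$.

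The case $\phi(z)=a/z$ is essentially free: a direct computation gives $\phi\circ\phi(z)=z$, hence $C_\phi^{2}=I$ on $H(\mathbb C^*)$. Consequently $C_\phi^{2n}=I$ for every positive integer $n$, so the sequence $k_n=2n$ works trivially and $C_\phi$ is (even uniformly) rigid.

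For $\phi(z)=az$ with $a\in\mathbb T$, I would choose $(k_n)_{n\in\mathbb N}$ so that $a^{k_n}\to 1$ as $n\to+\infty$. When $a$ is a root of unity this is elementary (take $k_n=np$ with $a^p=1$); when $a$ is irrational on $\mathbb T$, one simply uses that the orbit of $1$ under multiplication by $a$ is dense in $\mathbb T$ and extracts a strictly increasing sequence along which $a^{k_n}\to 1$. Crucially, this sequence depends only on $a$, not on $f$. To verify $C_\phi^{k_n}f\to f$ uniformly on compact subsets of $\mathbb C^*$, fix $f\in H(\mathbb C^*)$, a compact $K\subset \mathbb C^*$, and $\epsilon>0$. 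Choose $R>r>0$ with $K\subset \{r\leq |z|\leq R\}$ and let $L\coloneqq \{r/2\leq |z|\leq 2R\}$, which is a compact subset of $\mathbb C^*$. By uniform continuity of $f$ on $L$ there is $\delta>0$ such that $|f(z)-f(w)|<\epsilon$ whenever $z,w\in L$ and $|z-w|<\delta$. Since $|a^{k_n}-1|\to 0$, for all large $n$ we have $|a^{k_n}-1|<\min\{\delta/R, 1\}$; in particular both $z$ and $a^{k_n}z$ lie in $L$ for $z\in K$, and
\begin{align*}
\sup_{z\in K}|f(a^{k_n}z)-f(z)|<\epsilon,
\end{align*}
as required. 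This establishes rigidity.

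The main obstacle is more apparent than real: because $(k_n)$ depends only on $a$ (not on $f$ or $K$), and because compact subsets of $\mathbb C^*$ automatically stay away from $0$ and $\infty$, the Laurent nature of $f\in H(\mathbb C^*)$ does not cause trouble — uniform continuity on a slightly enlarged annular compact set is all that is needed. Thus no delicate argument involving the Laurent expansion is required, and the proof is essentially a transcription of the corresponding argument in the proof of Theorem \ref{t.h(rigid)} adapted to the annular geometry of $\mathbb C^*$.
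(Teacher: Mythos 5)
Your proof is correct and follows essentially the same route as the paper: the paper's own proof is simply the remark that a careful inspection of the proof of Theorem \ref{t.punct-rec} gives the result, because in both positive cases that proof already produces a sequence of iterates, independent of $f$ (namely $(k_n)$ with $a^{k_n}\to 1$, respectively $k_n=2n$), along which $C_\phi^{k_n}f\to f$ for every $f\in H(\mathbb C^*)$, while the negative cases are excluded since rigidity implies recurrence. Your write-up just makes the uniform-continuity-on-annuli details explicit.
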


\begin{proof}
	A careful inspection of the proof of Theorem \ref{t.punct-rec} gives the desired result. 
\end{proof}

\subsection{Composition operators on the space of holomorphic functions on the unit disk.} We now consider the Fr\'echet space $H(\mathbb D)$ of holomorphic functions on the unit disk $\mathbb D$, endowed with the topology of uniform convergence on the compact subsets of $\mathbb D$. Let $\phi:\mathbb D\to \mathbb D$ be a holomorphic function. We define the composition operator $C_\phi:H(\mathbb D)\to H(\mathbb D)$ as $C_\phi(f)=f\circ \phi$. Again it is clear that $C_\phi$ is a continuous linear operator.

We will be especially interested on composition operators induced by \emph{linear fractional maps}.

\begin{definition}
	A non-constant map $\phi:\mathbb D\to \mathbb D$ is called a \emph {linear fractional map} if it can be written in the form 
	\begin{align*}
		\phi(z)=\frac{az+b}{cz+d},\quad z\in\mathbb D, 
	\end{align*}
	for some $a,b,c,d\in\mathbb C$ satisfying $ad-bc\neq 0$. The latter condition is necessary and sufficient for $\phi$ to be nonconstant. We denote by $\lfm$ the set of all linear fractional maps of $\mathbb D$ into itself. The linear fractional maps that take $\mathbb D$ \emph{onto} itself are called \emph{conformal automorphisms of $\mathbb D$}. 
\end{definition}

The members of $\lfm$ have at least one and at most two fixed points in $\hat{ \mathbb C}$. We classify them as: 
\begin{itemize}
	\item{Linear fractional maps without a fixed point in $\mathbb D$.} 
	\begin{itemize}
		\item [-] \emph{parabolic linear fractional maps:} those having a unique attractive fixed point on $ \mathbb T$. 
		\item [-] \emph{hyperbolic maps with attractive fixed point on $\mathbb T$:} those having an attractive fixed point $\alpha\in\mathbb T$ and a second fixed point $\beta\in\hat{\mathbb C}\setminus \mathbb D$. The linear fractional map is a hyperbolic automorphism of $\mathbb D$ if and only if both fixed points are on $\mathbb T$.
	\end{itemize}
	\item{Linear fractional maps having a fixed point in $\mathbb D$.} Here there are two cases:
	\begin{itemize}
		\item [-] either \emph{the interior fixed point is attractive}, or
		\item [-] the map is an \emph{elliptic automorphism:} The automorphisms of $\mathbb D$ having a fixed point $\alpha\in \mathbb D$ and the second fixed point $\beta\in \hat{\mathbb C}\setminus \overline{\mathbb D}$. 
	\end{itemize}
\end{itemize}

For these notions and classification we refer the reader to \cite{ShapB}.

The following theorem describes the recurrent composition operators, induced by holomorphic self-maps of the disk.

\begin{theorem}
	\label{t.compHD} Let $\phi:\mathbb D\to\mathbb D$ be a holomorphic function. The composition operator $C_\phi:H(\mathbb D)\to H(\mathbb D)$ is recurrent if and only if either $\phi$ is univalent and has no fixed point in $\mathbb D$ or $\phi$ is an elliptic automorphism.  
\end{theorem}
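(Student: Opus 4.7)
The plan is to prove the equivalence by establishing necessity and sufficiency separately. For \emph{necessity}, I would first rule out non-univalent $\phi$ by the same mechanism used for $C([0,1])$ in \S\ref{s.compcont}: if $\phi(z_1)=\phi(z_2)$ for distinct $z_1,z_2\in\mathbb D$, then the range of $C_\phi$, and hence $\overline{\Rec(C_\phi)}$, is contained in the proper closed subspace $\{g\in H(\mathbb D):g(z_1)=g(z_2)\}$, contradicting density. Next, I would treat the case that $\phi$ is univalent with a fixed point $\alpha\in\mathbb D$ but is not an elliptic automorphism. The Schwarz-Pick lemma gives $|\phi'(\alpha)|\leq 1$, and the classical Schwarz lemma forces $|\phi'(\alpha)|=1$ only when $\phi$ is a rotation about $\alpha$, i.e.\ an elliptic automorphism; thus $|\phi'(\alpha)|<1$. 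A routine estimate then yields $\phi^{[n]}(z)\to\alpha$ uniformly on compact subsets of $\mathbb D$, so $C_\phi^n f\to f(\alpha)$ in $H(\mathbb D)$ for every $f$, and any recurrent $f$ must equal the constant $f(\alpha)$; hence $\Rec(C_\phi)$ is not dense.

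For \emph{sufficiency} I would split into the two admissible cases. If $\phi$ is an elliptic automorphism with interior fixed point $\alpha$, conjugating by a disc automorphism $\tau$ with $\tau(\alpha)=0$ produces $\psi\coloneqq\tau\circ\phi\circ\tau^{-1}$, an automorphism fixing $0$ and therefore of the form $\psi(z)=e^{i\theta}z$. By the obvious Fr\'echet-space analogue of Lemma~\ref{l.equiv}, which uses nothing beyond continuity of $C_\tau$ and its inverse, it suffices to prove $C_\psi$ recurrent. Choosing a strictly increasing sequence $(k_n)$ with $e^{ik_n\theta}\to 1$ and using uniform continuity of each $f\in H(\mathbb D)$ on compact subsets then gives $C_\psi^{k_n}f\to f$ uniformly on compacta, so $C_\psi$ is in fact rigid. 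In the remaining case, where $\phi$ is univalent with no fixed point in $\mathbb D$, I would invoke the hypercyclicity theorem of Bourdon and Shapiro, which asserts that $C_\phi$ is hypercyclic on $H(\mathbb D)$ under precisely this hypothesis; hypercyclicity trivially implies recurrence.

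The genuinely hard ingredient is the fixed-point-free sufficiency. For linear fractional symbols it can be produced by hand: for parabolic or hyperbolic automorphisms of $\mathbb D$ it is the Seidel-Walsh theorem, while non-automorphism linear fractional maps are handled by intertwining with a hyperbolic automorphism via an explicit M\"obius conjugation. For a general univalent self-map one first needs the Denjoy-Wolff theorem to locate the attractive fixed point on $\mathbb T$ and then a Runge-type approximation argument to construct dense orbits; this is exactly the content of the Bourdon-Shapiro theorem, and seems cleanest to quote rather than reprove in this paper.
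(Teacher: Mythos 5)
Your proof is correct and follows essentially the same route as the paper: univalence is forced exactly as in the $C([0,1])$ and $H(\mathbb C)$ arguments, an interior fixed point of a symbol that is not an elliptic automorphism attracts the iterates locally uniformly (you via the Schwarz lemma, the paper by citing the Denjoy--Wolff iteration theorem), so only constants can be orbit limit points; elliptic automorphisms are conjugated to rotations; and the fixed-point-free case is reduced to a known hypercyclicity theorem on $H(\mathbb D)$. The one quibble is the attribution of that last theorem: on $H(\mathbb D)$ the statement you need is due to Grosse-Erdmann and Mortini (the paper's citation), obtained from Denjoy--Wolff plus the run-away condition $\phi^{[n]}(K)\cap K=\emptyset$, whereas the Bourdon--Shapiro results concern $H^2(\mathbb D)$, where univalence without an interior fixed point is in fact \emph{not} sufficient for hypercyclicity (parabolic non-automorphisms fail, as the paper notes in its $H^2$ theorem).
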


\begin{proof}
	First we assume that $C_\phi$ is recurrent. Using the the same argument as in the proof of Theorem~\ref{t.h(c)}, (i), we see that $\phi$ is necessarily univalent. If $\phi$ has no fixed point in $\mathbb D$ then there is nothing to show. Assume now that $\phi$ has an interior fixed point $p\in\mathbb D$. If $\phi$ is an automorphism of the disk then it is necessarily an elliptic automorphism; see \cite{ShapB}. If $\phi$ is not an elliptic automorphism then the Denjoy-Wolff Iteration Theorem, \cite{ShapB}*{Proposition 1, Chapter 5}, implies that $\phi^{[n]}$ converges to $p$ uniformly on compact subsets of $\mathbb D$. We conclude that the only limit points of the $C_\phi$-orbit are the constant functions. Therefore $C_\phi$ cannot be recurrent in this case.
	
	To show the converse observe that if $\phi$ is an elliptic automorphism then $\phi$ is conjugate to a rotation; see \cite{ShapB}*{Chapter 0}. Thus, there exists a linear fractional map $S$ and a complex number $\lambda\in \mathbb T$ such that $C_\phi=S^{-1} C_{\phi_\lambda} S$ where $\phi_\lambda (z)=\lambda z$, $z\in\mathbb D$. As in the proof of Theorem~\ref{t.h(c)} it is easy to see that $C_{\phi_\lambda}$ is recurrent and by Lemma \ref{l.equiv} we get that $C_\phi$ is recurrent. If $\phi$ is univalent and has no fixed point in $\mathbb D$ then by the Denjoy-Wolff theorem, \cite{ShapB}*{p. 78, Chapter 5}, there is a point $w\in \mathbb T$ such that $\phi^{[n]}\to w$ uniformly on compact subsets of $\mathbb D$. This implies that for every compact set $K\subset \mathbb D$ there exists a positive integer $n$ such that $ \phi^{[n]}(K)\cap K=\emptyset$. By \cite{ERMO}*{Theorem 3.2} $C_\phi$ is hypercyclic and thus recurrent.
\end{proof}
Specializing to linear fractional maps immediately gives the following corollary.

\begin{corollary} Let $\phi\in\lfm$. The composition operator $C_\phi:H(\mathbb D)\to H(\mathbb D)$ is recurrent if and only if $\phi$ is either parabolic, or hyperbolic with no fixed point in $\mathbb D$, or an elliptic automorphism.
\end{corollary}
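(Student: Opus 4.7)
My plan is to derive the corollary as a direct specialization of Theorem \ref{t.compHD}, using only the classification of linear fractional maps of $\mathbb D$ described immediately before the statement. The corollary is a matching exercise: I must translate the dichotomy in Theorem \ref{t.compHD} (``univalent without interior fixed point'' or ``elliptic automorphism'') into the language of the four classes of linear fractional self-maps of $\mathbb D$.

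The first observation is that every $\phi\in\lfm$ is univalent on $\mathbb D$, since any non-constant linear fractional transformation is a Möbius transformation on $\hat{\mathbb C}$ and, in particular, injective on its domain. Hence the univalence hypothesis in Theorem \ref{t.compHD} is automatic, and the theorem reduces, in this setting, to the dichotomy: $C_\phi$ is recurrent if and only if either $\phi$ has no fixed point in $\mathbb D$, or $\phi$ is an elliptic automorphism.

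Now I would go through the classification list. A parabolic linear fractional map has its unique fixed point on $\mathbb T$, so it has no fixed point in $\mathbb D$; a hyperbolic linear fractional map with attractive fixed point on $\mathbb T$ has its other fixed point in $\hat{\mathbb C}\setminus\mathbb D$, so again no fixed point in $\mathbb D$. In both cases Theorem \ref{t.compHD} yields recurrence. An elliptic automorphism is explicitly covered by the second alternative of Theorem \ref{t.compHD}. Conversely, the only remaining class in the classification consists of linear fractional maps with an attractive interior fixed point that are not elliptic automorphisms; for such $\phi$ the Denjoy--Wolff iteration theorem forces $\phi^{[n]}$ to converge locally uniformly to the interior fixed point, so by Theorem \ref{t.compHD} the operator $C_\phi$ fails to be recurrent. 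Assembling these cases gives both directions of the corollary.

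No step is a genuine obstacle: the only minor point to be careful about is making sure that the three classes listed in the corollary -- parabolic, hyperbolic with no fixed point in $\mathbb D$, and elliptic automorphism -- together exactly exhaust the recurrent cases within $\lfm$, and do not overlap with the non-recurrent case of a linear fractional map possessing an interior attractive fixed point that is not an automorphism. Since this follows immediately from the classification recalled right before the corollary, the proof is essentially a one-paragraph invocation of Theorem \ref{t.compHD}.
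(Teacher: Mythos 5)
Your proof is correct and is exactly the argument the paper intends: the paper derives the corollary by the single remark ``Specializing to linear fractional maps immediately gives the following corollary,'' i.e.\ by noting that members of $\lfm$ are automatically univalent and then matching the classification of linear fractional self-maps of $\mathbb D$ against the dichotomy of Theorem \ref{t.compHD}, just as you do. No issues.
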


We now characterize the rigid composition operators on $H(\mathbb D)$.

\begin{theorem}
	Let $\phi:\mathbb D\to \mathbb D$ be holomorphic. The composition operator $C_\phi:H(\mathbb D)\to H(\mathbb D)$ is rigid if and only if $\phi$ is an elliptic automorphism. 
\end{theorem}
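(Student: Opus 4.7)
The plan is to leverage Theorem \ref{t.compHD} to narrow the possibilities for $\phi$ and then handle each surviving case. Since rigidity implies recurrence, Theorem \ref{t.compHD} forces $\phi$ to be either univalent without fixed point in $\mathbb D$, or an elliptic automorphism. So it suffices to verify that rigidity holds in the second case and fails in the first.

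For the sufficient direction, I would first observe that if $\phi$ is an elliptic automorphism then, as in the proof of Theorem \ref{t.compHD}, there exist a conformal automorphism $S$ of $\mathbb D$ and $\lambda\in\mathbb T$ such that $\phi = S^{-1}\phi_\lambda S$, where $\phi_\lambda(z)=\lambda z$. A direct computation gives $C_\phi = C_S\, C_{\phi_\lambda}\, C_S^{-1}$, and so by Lemma \ref{l.equiv} it is enough to show that $C_{\phi_\lambda}$ is rigid on $H(\mathbb D)$. If $\lambda$ is a root of unity of order $p$, then $C_{\phi_\lambda}^{p}=I$, giving rigidity along $(pn)_{n\in\mathbb N}$. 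If $\lambda$ is not a root of unity, the sequence $(\lambda^n)$ is dense in $\mathbb T$, so I pick a strictly increasing sequence $(k_n)_{n\in\mathbb N}$ with $\lambda^{k_n}\to 1$. For any $f\in H(\mathbb D)$ and any compact $K\subset \mathbb D$, choose $r<1$ with $K\subset \overline{D(0,r)}$; since $|\lambda^{k_n}z|=|z|\leq r$, both $z$ and $\lambda^{k_n}z$ lie in $\overline{D(0,r)}$, on which $f$ is uniformly continuous. Together with $|\lambda^{k_n}z - z|\leq |\lambda^{k_n}-1|$, this yields $C_{\phi_\lambda}^{k_n}f\to f$ uniformly on $K$, so $C_{\phi_\lambda}$ is rigid.

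For the necessary direction, the remaining case to rule out is that $\phi$ is univalent with no fixed point in $\mathbb D$ but not an elliptic automorphism. Here the Denjoy-Wolff theorem (\cite{ShapB}*{p.~78, Chapter 5}) provides a point $w\in\mathbb T$ such that $\phi^{[n]}\to w$ uniformly on compact subsets of $\mathbb D$. If $C_\phi$ were rigid along $(k_n)_{n\in\mathbb N}$, then in particular for the identity function $\mathrm{id}(z)=z$ we would have $C_\phi^{k_n}\mathrm{id}=\phi^{[k_n]}\to \mathrm{id}$ uniformly on compact subsets. But any subsequence of $(\phi^{[n]})$ converges to the same limit as the full sequence, namely the constant function $w$, which is not the identity. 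This contradiction shows that rigidity fails in this case, completing the proof.

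The main technical point is that both directions essentially reduce to the behavior of iterates on compact subsets of $\mathbb D$: rigidity of $C_\phi$ is tightly controlled by whether $\phi^{[n]}$ (along a subsequence) approximates the identity on compact sets, and this is exactly what distinguishes the elliptic automorphism case (where conjugation to a rotation allows recurrence of powers to $1$) from the Denjoy-Wolff case (where all iterates are dragged to a boundary point). No genuinely hard step is anticipated; the only care needed is the uniform continuity argument on $\overline{D(0,r)}$ for the case when $\lambda$ is an irrational rotation.
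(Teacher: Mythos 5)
Your proof is correct, and the forward (sufficiency) direction is essentially the paper's: conjugate $\phi$ to a rotation $\phi_\lambda$, invoke Lemma \ref{l.equiv}, and check rigidity of $C_{\phi_\lambda}$ directly (the paper leaves this last step as "an easy exercise"; your uniform-continuity argument on $\overline{D(0,r)}$, using $|\lambda^{k_n}z-z|\le|\lambda^{k_n}-1|$, is exactly the right way to fill it in, and correctly produces one sequence $(k_n)$ that works for all $f$ simultaneously). Where you genuinely diverge is the necessity direction. The paper rules out the case of a univalent $\phi$ with no fixed point in $\mathbb D$ by observing, as in the proof of Theorem \ref{t.compHD}, that every compact $K$ eventually satisfies $\phi^{[n]}(K)\cap K=\emptyset$, so that $C_\phi$ is \emph{hereditarily hypercyclic} by \cite{ERMO}*{Theorem 3.2}, and a hereditarily hypercyclic operator is never rigid. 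You instead argue directly from the Denjoy--Wolff theorem: since $\phi^{[n]}\to w\in\mathbb T$ uniformly on compacta, every subsequence of $\phi^{[k_n]}=C_\phi^{k_n}(\mathrm{id})$ converges to the constant $w$, so the identity function is not even a recurrent vector for $C_\phi$, which already precludes rigidity. Your route is more elementary and self-contained (it needs no hypercyclicity machinery, only the Denjoy--Wolff theorem, which Theorem \ref{t.compHD} already uses), and it isolates a cleaner obstruction: a single explicit non-recurrent vector. The paper's route buys the stronger conclusion that $C_\phi$ is hereditarily hypercyclic in that case, which is more information than non-rigidity but at the cost of an external citation. Both are complete proofs.
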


\begin{proof}
	If $\phi$ is an elliptic automorphism we note as in the proof of Theorem \ref{t.compHD} above that there exists a linear fractional map $S$ and a complex number $\lambda\in \mathbb T$ such that $C_\phi=S^{-1} C_{\phi_\lambda} S$ where $\phi_\lambda (z)=\lambda z$, $z\in\mathbb D$. It is now an easy exercise to check that $C_{\phi_\lambda}$ is rigid. Assume now that $C_\phi$ is rigid. Then Theorem \ref{t.compHD} implies that either $\phi$ has no fixed point in $\mathbb D$ or that it is an elliptic automorphism. However, if $\phi$ has no fixed point in $\mathbb D$ it follows as in the proof of Theorem \ref{t.compHD} that for every compact set $K\subset \mathbb D$ there exists a positive integer $n_o$ such that $ \phi^{[n]}(K)\cap K=\emptyset$ for all $n\geq n_o$. This implies that $C_\phi$ is hereditarily hypercyclic; see for instance \cite{ERMO}*{Theorem 3.2}. In this case $C_\phi$ cannot be rigid. Thus $\phi$ is an elliptic automorphism. 
\end{proof}

\subsection{Composition operators on the Hardy space \texorpdfstring{$H^{2}(\mathbb D)$}{H2(D)}.}

In what follows we consider composition operators on the Hardy space $H^2(\mathbb D)$, consisting of holomorphic functions $f:\mathbb D\to \C$ such that
\begin{align*}
	\| f\|_{H^2(\mathbb D)}\coloneqq \sup_{0\leq r<1}{\left( \frac{1}{2\pi }\int_0^{2\pi }|f(re^{i\theta })|^2 d\theta \right) }^{1/2}<+\infty . 
\end{align*}
We immediately restrict our attention to the special class of symbols $\phi\in \lfm$.

\begin{theorem}
	\label{t.compH2} Let $\phi\in \lfm$. Then the operator $C_\phi :H^2(\mathbb D)\to H^2(\mathbb D)$ is recurrent if and only if $\phi$ is either hyperbolic with no fixed point in $\mathbb D$, or a parabolic automorphism, or an elliptic automorphism. 
\end{theorem}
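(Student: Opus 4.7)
The plan is to follow the structure of the argument for $H(\mathbb D)$ in Theorem~\ref{t.compHD}, treating three sufficient cases and ruling out the remaining ones; the main new difficulty compared with the $H(\mathbb D)$ case is excluding parabolic non-automorphisms, where the Denjoy-Wolff argument alone does not suffice. For sufficiency, if $\phi$ is an elliptic automorphism, it is conformally conjugate to a rotation $\phi_\lambda(z)=\lambda z$ with $\lambda\in\mathbb T$ via some conformal automorphism $S$ of $\mathbb D$, so $C_\phi=C_{S^{-1}}\,C_{\phi_\lambda}\,C_S$ and Lemma~\ref{l.equiv} reduces the question to $C_{\phi_\lambda}$. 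Writing $f(z)=\sum a_k z^k\in H^2(\mathbb D)$ and choosing $n_j\to+\infty$ with $\lambda^{n_j}\to 1$, dominated convergence applied to $\|C_{\phi_\lambda}^{n_j}f-f\|_{H^2}^2=\sum|a_k|^2|\lambda^{n_jk}-1|^2$ shows $C_{\phi_\lambda}$ is rigid, hence recurrent. If $\phi$ is hyperbolic with no fixed point in $\mathbb D$ or a parabolic automorphism, I will invoke the classical Bourdon--Shapiro theorem, which states that in these cases $C_\phi$ is hypercyclic on $H^2(\mathbb D)$, hence recurrent.

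For necessity, suppose $C_\phi$ is recurrent. If $\phi$ has an interior fixed point $p\in\mathbb D$ and is not an elliptic automorphism, the Denjoy-Wolff theorem yields $\phi^{[n]}\to p$ uniformly on compact subsets of $\mathbb D$. Since convergence in the $H^2$ norm implies uniform convergence on compacta of $\mathbb D$, any $f\in\Rec(C_\phi)$ with $C_\phi^{k_n}f\to f$ satisfies both $f(\phi^{[k_n]}(z))\to f(z)$ and $f(\phi^{[n]}(z))\to f(p)$ for every $z\in\mathbb D$, forcing $f\equiv f(p)$. Hence $\Rec(C_\phi)\subseteq\mathbb C$, contradicting density. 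This excludes every $\phi\in\lfm$ with interior fixed point other than elliptic automorphisms.

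The main obstacle is the parabolic non-automorphism case. After a rotation I may assume the Denjoy-Wolff point is $\alpha=1$. Via the Cayley transform $\psi(z)=i(1+z)/(1-z)$, $\phi$ is conformally conjugate to $w\mapsto w+t$ on the upper half-plane with $\mathrm{Im}(t)>0$, yielding the explicit formula $\phi^{[n]}(z)-1=-2i/(\psi(z)+nt+i)$. From this I would extract two key estimates, valid for $z,z_1,z_2\in\mathbb D$: first, $1-|\phi^{[n]}(z)|\sim c/n$, and in particular the iterates approach $1$ non-tangentially; second, $|\phi^{[n]}(z_1)-\phi^{[n]}(z_2)|=O(1/n^2)$. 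Combined with the standard derivative bound $|f'(w)|\leq C\|f\|_{H^2}/(1-|w|)^{3/2}$ for $f\in H^2$ (a direct consequence of Cauchy-Schwarz applied to $f'(w)=\sum ka_kw^{k-1}$) and the fact that the straight segment between $\phi^{[n]}(z_1)$ and $\phi^{[n]}(z_2)$ remains in $\mathbb D$ at distance at least $c/n$ from $\mathbb T$, the mean value inequality gives
\begin{equation*}
	\bigl|f(\phi^{[n]}(z_1))-f(\phi^{[n]}(z_2))\bigr|=O\bigl(\|f\|_{H^2}\,n^{3/2}\cdot n^{-2}\bigr)=O(n^{-1/2})\to 0
\end{equation*}
for every $f\in H^2$. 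If $f\in\Rec(C_\phi)$, then $f(\phi^{[k_n]}(z_j))\to f(z_j)$ for $j=1,2$, so comparing with the displayed estimate forces $f(z_1)=f(z_2)$. Hence $f$ is constant, $\Rec(C_\phi)\subseteq\mathbb C$, density fails, and $C_\phi$ is not recurrent. Notably this argument breaks down for parabolic automorphisms, where the approach to $\alpha$ is tangential and $1-|\phi^{[n]}(z)|\sim 1/n^2$, inflating the derivative bound to $O(n^3)$ and making the product $O(n)$; this is fully consistent with parabolic automorphisms being hypercyclic.
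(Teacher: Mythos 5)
Your proposal is correct, and in all but one case it runs along the same lines as the paper: sufficiency for hyperbolic maps without interior fixed point and for parabolic automorphisms via hypercyclicity, sufficiency for elliptic automorphisms by conjugating to a rotation and a dominated-convergence computation, and exclusion of maps with an attractive interior fixed point via the Denjoy--Wolff theorem. The one place where you genuinely diverge is the parabolic non-automorphism case. The paper simply cites Shapiro's Linear Fractional Hypercyclicity Theorem for the statement that only constant functions can be limit points of $C_\phi$-orbits, and concludes non-recurrence from that. You instead reprove the relevant part of that statement from scratch: conjugating to a translation $w\mapsto w+t$ with $\mathrm{Im}\,t>0$ on the half-plane, you extract the two key asymptotics $1-|\phi^{[n]}(z)|\asymp 1/n$ (non-tangential approach) and $|\phi^{[n]}(z_1)-\phi^{[n]}(z_2)|=O(1/n^2)$, and combine them with the derivative growth bound $|f'(w)|\lesssim \|f\|_{H^2}(1-|w|)^{-3/2}$ to force $f(z_1)=f(z_2)$ for every recurrent $f$. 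These estimates are exactly right (they are in fact the mechanism behind Shapiro's theorem), the segment argument is handled carefully, and your closing remark explaining why the bound degenerates to $O(n)$ in the automorphism case ($\mathrm{Im}\,t=0$, tangential approach, $1-|\phi^{[n]}(z)|\asymp 1/n^2$) is a correct and worthwhile consistency check. What your route buys is self-containedness — you only use the Denjoy--Wolff theorem, the hypercyclicity of the two "good" classes, and elementary $H^2$ growth estimates, rather than the full Linear Fractional Hypercyclicity Theorem; what it costs is length. The only nitpick is notational: with $\phi=S\circ\phi_\lambda\circ S^{-1}$ one gets $C_\phi=C_{S^{-1}}C_{\phi_\lambda}C_S$, so the direction of the conjugation should be stated consistently, but since $C_S$ is bounded and invertible on $H^2(\mathbb D)$ for an automorphism $S$, Lemma~\ref{l.equiv} applies either way and nothing is affected.
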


\begin{proof}
	We first consider the case that $\phi\in\lfm$ has no fixed points in $\mathbb D$. Then $\phi$ is either parabolic or hyperbolic. In either case, \cite{BM}*{Theorem 1.47} implies that $C_\phi$ is hypercyclic and thus recurrent. If $\phi$ is a parabolic non-automorphism then by \cite{ShapB}*{The Linear Fractional Hypercyclicity Theorem, p.114}, only constant functions can be limit points of $C_\phi$-orbits. Therefore $C_\phi$ is not recurrent.
	
	Assume now that $\phi$ has a fixed point $p\in\mathbb D$. If $\phi$ is not an elliptic automorphism of $\mathbb D$ we claim that $C_\phi$ is not recurrent. Indeed, assume, for the sake of contradiction, that $C_\phi$ is recurrent and consider a non-constant $f\in\Rec(C_\phi)$. Then there exists a strictly increasing sequence of positive integers $(k_n)_{n\in\mathbb N}$ such that 
	\begin{align*}
		\Norm f\circ \phi^{[k_n]}-f.H^2(\mathbb D). \to 0,\quad n\to +\infty. 
	\end{align*}
	Therefore $f\circ \phi^{[k_n]}$ converges to $f$ uniformly on compact subsets of $\mathbb D$. Let $0<r<1$. Then 
	\begin{align*}
		\sup_{|z|\leq r}\Abs{f(\phi^{[k_n]}(z))-f(z)}\to 0,\quad n\to+\infty. 
	\end{align*}
	Since we have assumed that $\phi$ is not an elliptic automorphism, the interior fixed point $p$ must be attractive. By \cite{ShapB}*{Proposition 1, Chapter 5} the iterates $\phi^{[n]}$ converge to $p$ uniformly on compact subsets of $\mathbb D$. We conclude that for every $w\in \overline{D(0,r)}$, $f(\phi^{[k_n]})(w))\to f(p) $ as $n\to+\infty$. It readily follows that $f(w)=f(p)$ for every $w\in \overline{D(0,r)}$ and thus $f$ is constant, a contradiction. It only remains to check what happens when $\phi$ is an automorphism of the disk in which case $\phi$ is an elliptic automorphism. Without loss of generality we can assume that $\phi$ is of the form $\phi(z)=\lambda z$ for some $\lambda\in\mathbb T$ and a direct computation shows that every $f\in H^2(\mathbb D)$ is a recurrent vector for $C_\phi$. 
\end{proof}

\begin{theorem}
	Let $\phi\in \lfm$. Consider the composition operator $C_\phi:H^2(\mathbb D)\to H^2(\mathbb D)$. 
	\begin{itemize}
		\item [(i)] $C_\phi$ is rigid if and only if $\phi$ is an elliptic automorphism. 
		\item [(ii)] $C_\phi$ is uniformly rigid if and only if $\phi$ is conjugate to a rational rotation. 
	\end{itemize}
\end{theorem}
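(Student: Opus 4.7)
The plan is to combine Theorem \ref{t.compH2} with the Denjoy--Wolff theorem for the forward direction of (i), and to reduce the reverse direction of (i) together with all of (ii) to a concrete computation on the diagonal operator $C_{\phi_\lambda}$, where $\phi_\lambda(z) = \lambda z$.

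For (i), suppose first that $C_\phi$ is rigid. Rigidity implies recurrence, so by Theorem \ref{t.compH2} either $\phi$ is an elliptic automorphism, or $\phi$ has no fixed point in $\mathbb D$. To exclude the latter, I would test rigidity against $f(z)=z\in H^2(\mathbb D)$: if $C_\phi^{k_n}\to I$ in SOT, then $\phi^{[k_n]}\to \mathrm{id}$ in $H^2(\mathbb D)$, and since point evaluation at $0$ is a bounded functional on $H^2(\mathbb D)$, we obtain $\phi^{[k_n]}(0)\to 0$. On the other hand, by the Denjoy--Wolff theorem, whenever $\phi$ is a self-map of $\mathbb D$ with no interior fixed point, $\phi^{[n]}(0)$ converges to a boundary point $w\in\mathbb T$, which contradicts $\phi^{[k_n]}(0)\to 0$. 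Hence $\phi$ is forced to be an elliptic automorphism. For the converse, I would use the classical fact that an elliptic automorphism is conjugate, via some conformal automorphism $S$ of $\mathbb D$, to a rotation $\phi_\lambda(z)=\lambda z$ with $\lambda\in\mathbb T$. Then $C_\phi=C_S\, C_{\phi_\lambda}\, (C_S)^{-1}$, with $C_S$ bounded and invertible on $H^2(\mathbb D)$, so Lemma \ref{l.equiv} reduces both parts of the theorem to the corresponding statements for $C_{\phi_\lambda}$.

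The operator $C_{\phi_\lambda}$ is diagonal with respect to the orthonormal monomial basis $(z^k)_{k\geq 0}$ of $H^2(\mathbb D)$, with eigenvalues $(\lambda^k)_{k\geq 0}$; consequently, for every $f=\sum_{k\geq 0} c_k z^k \in H^2(\mathbb D)$,
\begin{align*}
\|C_{\phi_\lambda}^n f - f\|_{H^2}^2 = \sum_{k\geq 0}|c_k|^2|\lambda^{kn}-1|^2,\qquad \|C_{\phi_\lambda}^n - I\|=\sup_{k\geq 0}|\lambda^{kn}-1|.
\end{align*}
For the rigidity half of (i), I would choose $(n_j)$ with $\lambda^{n_j}\to 1$ (available for every $\lambda\in\mathbb T$, using Kronecker's theorem in the irrational case), so that $\lambda^{kn_j}\to 1$ for each fixed $k$, and dominated convergence with majorant $4|c_k|^2$ gives $\|C_{\phi_\lambda}^{n_j}f - f\|_{H^2}\to 0$ for every $f$. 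For (ii), the operator-norm identity above is read directly: if $\lambda^q=1$ then $C_{\phi_\lambda}^q=I$, so $C_{\phi_\lambda}$ is trivially uniformly rigid; and if $\lambda=e^{2\pi i\theta}$ with $\theta$ irrational, then for each $n\geq 1$ the orbit $(kn\theta \bmod 1)_{k\in\mathbb N}$ is dense in $[0,1)$, forcing $\|C_{\phi_\lambda}^n-I\|=2$ for every $n$ and ruling out uniform rigidity.

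The main point to be careful about is that the diagonal representation on the monomial basis is used simultaneously for rigidity and uniform rigidity, but the two notions are separated exactly by the distinction between SOT convergence, which is controlled coefficient by coefficient via dominated convergence and only requires that $\lambda^{n_j}\to 1$, and norm convergence, which requires $\sup_k |\lambda^{k n}-1|\to 0$ and in turn forces $\lambda$ to be a root of unity. This dichotomy is precisely what produces the difference between parts (i) and (ii).
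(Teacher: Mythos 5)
Your proof is correct and, except for one step, follows the same route as the paper: reduce via Theorem \ref{t.compH2}, conjugate an elliptic automorphism to a rotation $\phi_\lambda(z)=\lambda z$ and invoke Lemma \ref{l.equiv}, then read off both rigidity and uniform rigidity from the diagonal action on the orthonormal monomial basis, with dominated convergence handling the SOT statement and the identity $\|C_{\phi_\lambda}^n-I\|=\sup_{k\geq 0}|\lambda^{kn}-1|$ handling the norm statement, exactly as in the paper. The one genuine divergence is how you exclude the fixed-point-free symbols in the forward direction of (i): the paper quotes the fact that $C_\phi$ is hereditarily hypercyclic when $\phi$ is hyperbolic without fixed point in $\mathbb D$ or a parabolic automorphism, and then uses the general observation from the introduction that a hereditarily hypercyclic operator is never rigid; you instead test rigidity against the single function $f(z)=z$, so that $C_\phi^{k_n}\to I$ in SOT forces $\phi^{[k_n]}(0)\to 0$ by boundedness of point evaluation at the origin, while the Denjoy--Wolff theorem sends $\phi^{[n]}(0)$ to a point of $\mathbb T$. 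Your version is more elementary and self-contained, relying only on the Denjoy--Wolff theorem (which the paper already uses elsewhere) rather than on the hereditary hypercyclicity of these composition operators; the paper's version, in exchange, records the stronger structural fact that the excluded operators are hereditarily hypercyclic. Both arguments are complete.
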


\begin{proof}
	For (i) first assume that $C_\phi$ is rigid. By Theorem \ref{t.compH2} the operator $C_\phi$ is recurrent and thus $\phi$ is either hyperbolic with no fixed point in $\mathbb D$, or a parabolic automorphism, or an elliptic automorphism. If $\phi$ is hyperbolic with no fixed point in $\mathbb D$ or a parabolic automorphism then $C_\phi$ is hereditarily hypercyclic; see \cite{BM}. Thus $C_\phi$ cannot be recurrent. To complete the proof of (i) it remains to show that if $\phi$ is an elliptic automorphism then $C_\phi$ is rigid. Without loss of generality we can assume that $\phi(z)=\lambda z$ for some $\lambda \in \mathbb T$. There exists a strictly increasing sequence of positive integers $(k_n)_{n\in\mathbb N}$ such that $\lambda^{k_n}\to 1$. For any $f(z)=\sum_{m\geq 0} a_m z^m \in H^2(\mathbb D)$ we have 
	\begin{align}
		\| C_\phi ^{k_n}f-f\|_{H^2(\mathbb D)} ^2 = \sum_{m\geq 0} |a_m|^2|1-\lambda^{m k_n}|^2\to 0\quad\text{as}\quad n\to+\infty 
	\end{align}
	by dominated convergence. Thus $C_\phi$ is rigid.
	
	For the proof (ii) we observe that if either of the equivalences is true then $\phi$ is necessarily conjugate to a rotation so we restrict our attention to $\phi$ of the form $\phi(z)=\lambda z$, with $|\lambda|=1$. For any positive integer $n$ we have 
	\begin{align*}
		\|C_\phi ^{n}- I\| = \sup_{m\geq 0} |1-\lambda^{m n}|. 
	\end{align*}
	However, $\liminf_{n\to+\infty}\sup_{m\geq 0}|1-\lambda^{m n}|= 0$ if and only if $\lambda=e^{2\pi i \theta}$ with $\theta \in \mathbb Q$. 
\end{proof}

\section{Multiplication Operators}\label{s.mult} We now consider multiplication operators on different spaces of functions. For some Banach or Fr\'echet space $Y$ we will denote by $M_\phi:Y\to Y$ the multiplication operator with \emph{symbol} $\phi$, that is, 
\begin{align*}
	M_\phi(f)=\phi f, \quad f\in Y, 
\end{align*}
whenever this operator is well defined. In most cases we will see that the recurrent multiplication operators are, in some sense, trivial, meaning that the symbol of the operator is a constant function. 

\subsection{Multiplication operators on spaces of continuous functions.} First we consider multiplication operators on spaces of continuous functions.

\begin{theorem}
	Let $(K,d)$ be a compact and connected metric space and denote by $C(K)$ the continuous functions $f:K\to \C$. For $\phi \in C(K)$ the following are equivalent. 
	\begin{itemize}
		\item[(i)] $M_\phi:C(K)\to C(K)$ is recurrent. 
		\item[(ii)] $M_\phi:C(K)\to C(K)$ is rigid. 
		\item[(iii)] $M_\phi:C(K)\to C(K)$ is uniformly rigid. 
		\item [(iv)] There exists $a\in \mathbb T$ such that $\phi (x)=a$ for every $x\in K$. 
	\end{itemize}
\end{theorem}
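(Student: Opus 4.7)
The implications $(iv) \Rightarrow (iii) \Rightarrow (ii) \Rightarrow (i)$ are all essentially trivial. For $(iv) \Rightarrow (iii)$: if $\phi \equiv a$ with $a \in \mathbb T$, then $M_\phi^n = a^n I$, and by compactness of $\mathbb T$ there is a strictly increasing sequence $(k_n)_{n\in\mathbb N}$ with $a^{k_n}\to 1$, whence $\|M_\phi^{k_n}-I\|=|a^{k_n}-1|\to 0$. The implications $(iii)\Rightarrow(ii)\Rightarrow(i)$ follow immediately from the definitions.

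For $(i) \Rightarrow (iv)$ my plan is to first establish uniform rigidity and the fact that $|\phi(x)| = 1$ everywhere, and then to force $\phi$ to be constant. By Proposition \ref{p.dense}, $\Rec(M_\phi)$ is dense in $C(K)$, so there exists a recurrent vector $f$ with $\|f-\mathbf{1}\|_\infty<1/2$, which in particular satisfies $|f(x)|\geq 1/2$ for all $x\in K$. By definition of recurrence there is a strictly increasing sequence $(k_n)_{n\in\mathbb N}$ of positive integers such that $\phi^{k_n}f\to f$ uniformly on $K$. Writing $(\phi^{k_n}-1)f = \phi^{k_n}f-f$ and using that $|f|\geq 1/2$, I get
\begin{align*}
 \|\phi^{k_n}-1\|_\infty \leq 2\|\phi^{k_n}f-f\|_\infty \to 0,
\end{align*}
which proves uniform rigidity directly. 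Pointwise this says $|\phi(x)|^{k_n}=|\phi^{k_n}(x)|\to 1$ with $k_n\to+\infty$, and this is only possible if $|\phi(x)|=1$ for every $x\in K$.

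It remains to show that $\phi$ is constant, and this is the step where the connectedness hypothesis on $K$ and a geometric wrapping argument come in. Since $\phi:K\to\mathbb T$ is continuous and $K$ is connected, the image $\phi(K)$ is a compact connected subset of $\mathbb T$, hence either a singleton, a proper closed arc, or all of $\mathbb T$. Suppose for contradiction that $\phi$ is non-constant, so that $\phi(K)$ is an arc of some positive arc length $\ell>0$. Applying the continuous map $z\mapsto z^{k_n}$ to $\phi(K)$ produces a set containing an arc of total traversed length $k_n\ell$ on $\mathbb T$. Once $k_n$ is large enough that $k_n\ell\geq 2\pi$, this image wraps completely around the circle, and hence $\phi^{k_n}(K)=\mathbb T$; in particular there exists $x\in K$ with $\phi^{k_n}(x)=-1$, so that $\|\phi^{k_n}-1\|_\infty\geq 2$, contradicting the uniform convergence $\phi^{k_n}\to 1$ on $K$ established above. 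Hence $\phi(K)$ is a singleton, so $\phi\equiv a$ for some $a\in \mathbb T$, proving $(iv)$.

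The main obstacle I expect is the final geometric wrapping step: the preceding algebraic manipulations only produce the information that $\phi^{k_n}\to 1$ uniformly along some sequence, which on its own is very far from forcing $\phi$ constant without some global geometric/topological input. The essential non-trivial use of hypotheses is that $K$ is connected (which makes $\phi(K)$ a connected piece of $\mathbb T$) combined with $k_n\to+\infty$ (which provides arbitrarily large wrappings of any arc of positive length).
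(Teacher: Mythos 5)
Your proof is correct, and while the endgame is the same as the paper's, your route to the key estimate is genuinely different and somewhat cleaner. The paper first proves $|\phi|\equiv 1$ by a contradiction argument (every recurrent vector must vanish on any open set where $|\phi|<1$, so $\Rec(M_\phi)$ could not be dense), then deduces $\|M_\phi\|=1$, invokes Proposition \ref{p.bimpliesunit}(v) to conclude $\Rec(M_\phi)=C(K)$, and only then applies recurrence to the constant function $\mathbf 1$ to get $\sup_{x\in K}|\phi(x)^{k_n}-1|\to 0$. You shortcut all of this: by Proposition \ref{p.dense} you pick a single recurrent vector $f$ with $\|f-\mathbf 1\|_\infty<1/2$, hence $|f|\geq 1/2$ everywhere, and divide to get $\|\phi^{k_n}-1\|_\infty\leq 2\|\phi^{k_n}f-f\|_\infty\to 0$ in one line; this simultaneously yields $|\phi|\equiv 1$, uniform rigidity, and the uniform convergence needed for the final step, without appealing to the power-bounded machinery. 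The concluding argument --- $K$ connected forces $\phi(K)$ to be an arc, and an arc of positive length wraps around $\mathbb T$ under $z\mapsto z^{k_n}$ once $k_n$ is large, contradicting $\sup_{z\in\phi(K)}|z^{k_n}-1|\to 0$ --- is the same as the paper's, just spelled out more explicitly. Both proofs use the hypotheses in the same essential places (density of recurrent vectors, connectedness of $K$, and $k_n\to+\infty$), so your version is a legitimate, self-contained alternative.
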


\begin{proof}
	The implications $\text{(iv)}\Rightarrow \text{(iii)}$, $\text{(iii)}\Rightarrow \text{(ii)}$, $\text{(ii)}\Rightarrow \text{(i)}$ are trivial to show. In order to show that (i) implies (iv) let us assume that $M_\phi$ is recurrent. Suppose that there exist $x_0\in K$ with $|\phi (x_0)|\neq 1$ and consider any $f\in \Rec(M_\phi )$. If $|\phi (x_0)|<1$ then, by the continuity of $\phi$, there exists an open ball $B(x_0,\delta )=\{ y\in K: d(y,x_0)<\delta \}$ such that $|\phi (y)|<1$ for every $y\in B(x_0,\delta )$. From the last and the fact that $f\in \Rec(M_\phi )$ it easily follows that $f(y)=0$ for every $y\in B(x_0,\delta )$. Therefore we have 
	\begin{align*}
		\Rec(M_\phi )\subset \{ f\in C(K): f(y)=0 \quad\text{for every}\quad y\in B(x_0,\delta ) \} . 
	\end{align*}
	However the right hand set in the above inclusion cannot be dense in $C(K)$ (there is hidden here an argument involving the connectedness of the space $K$ which is left to the reader), thereby giving a contradiction. The case $|\phi (x_0)|>1$ can be handled in a similar fashion and we leave the details to the reader. At this point we know that $|\phi (x)|=1$ for every $x\in K$, which in turn implies that $\| M_\phi \| =1$. Therefore $M_\phi $ is power bounded and we get that $\Rec(M_\phi )=C(K)$. Assume that $\phi (x_1)\neq \phi (x_2)$ for some $x_1,x_2\in K$. The set $K$ is connected, hence $\phi (K)$ is connected and since $|\phi (x_1)|=|\phi (x_2) |=1$ with $\phi (x_1)\neq \phi (x_2)$ there exists a set $L\subset K$ and an arc $J\subset \mathbb{T}$, $\phi (x_1),\phi (x_2)\in J$ such that $J\subset \phi (L)$. Observe that the constant function $h(x)=1$, $x\in K$, is recurrent for $M_\phi$ since $\Rec(M_\phi )=C(K)$. Thus, there exists a sequence of positive integers $(k_n)_{n\in\mathbb N}$ such that $\sup_{x\in K}|{\phi (x)}^{k_n}-1|\to 0$. It follows that $\sup_{z\in J}|z^{k_n}-1|\to 0$, which is clearly a contradiction. Hence, there exists $a\in \mathbb T$ such that $\phi (x)=a$ for every $x\in K$. 
\end{proof}

\subsection{Multiplication operators on Hilbert spaces of analytic functions on domains of \texorpdfstring{${\mathbb C}^n $}{Cn}.}

We fix a non-empty open connected set $\Omega$ of $\mathbb{C}^n$, $n\in \mathbb{N}$, and $H$ a Hilbert space of holomorphic functions on $\Omega $ such that: 
\begin{itemize}
	\item[-] $H\neq \{ 0\}$, and 
	\item [-]for every $z\in \Omega$, the point evaluation functionals $f\to f(z)$, $f\in H$, are bounded. 
\end{itemize}
Every complex valued function $\phi:\Omega \to \mathbb{C}$ such that the pointwise product $\phi f$ belongs to $H$ for every $f\in H$ is called \emph{a multiplier of $H$}. In particular $\phi$ defines the multiplication operator $M_{\phi}:H\to H$ in terms of the formula 
\begin{align*}
	M_{\phi }(f)=\phi f,\quad f\in H. 
\end{align*}
By the boundedness of point evaluations along with the closed graph theorem it follows that $M_{\phi}$ is a bounded linear operator on $H$. It turns out that under our assumptions on $H$, every multiplier $\phi $ is a bounded holomorphic function, that is $\| \phi \|_{\infty}:=\sup_{z\in \Omega }|\phi (z)|<+\infty $. In particular we have that $\| \phi \|_{\infty }\leq \| M_{\phi}\| $; see \cite{GoSh}.

The recurrent properties of multiplication operators on Hilbert spaces of analytic functions as above are contained in the following theorem, proved in \cite{CP}: 

\begin{theorem}\label{t.adjoints}
	Suppose that every non-constant bounded holomorphic function $\phi $ on $\Omega $ is a multiplier of $H$ such that $\| M_{\phi}\| =\| \phi \|_{\infty }$. Then for each such $\phi$ the following hold. 
	\begin{itemize}
		\item [(i)] The multiplication operator $M_\phi$ is not recurrent. 
		\item [(ii)] The adjoint $M_\phi ^*$ is recurrent if and only if it is hypercyclic if and only if $\phi(\Omega)\cap \mathbb T \neq \emptyset$. 
	\end{itemize}
\end{theorem}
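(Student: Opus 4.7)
To prove (i), I would argue that recurrence of $M_\phi$ is incompatible with $\phi$ being non-constant. By the boundedness of point evaluations, $H$ is a reproducing kernel Hilbert space; denote by $K_z$ the kernel at $z \in \Omega$. By Proposition \ref{p.dense}, $\Rec(M_\phi)$ is dense in $H$, so fix a nonzero $f \in \Rec(M_\phi)$ and a strictly increasing sequence $(k_n)$ with $M_\phi^{k_n} f = \phi^{k_n} f \to f$ in $H$. Pairing with $K_z$ yields the pointwise limit $\phi(z)^{k_n} f(z) \to f(z)$ for every $z \in \Omega$. On the open dense set $\{z \in \Omega : f(z) \neq 0\}$ (open and dense since $f$ is a nonzero holomorphic function on the connected domain $\Omega$) we obtain $\phi(z)^{k_n} \to 1$, forcing $|\phi(z)| = 1$. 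By continuity $|\phi| \equiv 1$ on all of $\Omega$, contradicting that $\phi(\Omega)$ is open (by the open mapping theorem for the non-constant holomorphic $\phi$).

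For (ii), the implication ``$M_\phi^*$ hypercyclic $\Rightarrow$ $M_\phi^*$ recurrent'' is automatic. To derive hypercyclicity of $M_\phi^*$ from the assumption $\phi(\Omega) \cap \mathbb T \neq \emptyset$, I would invoke the Godefroy--Shapiro criterion via reproducing kernels: the reproducing property forces $M_\phi^* K_z = \overline{\phi(z)} K_z$, so the $K_z$ are eigenvectors of $M_\phi^*$ with eigenvalues $\overline{\phi(z)}$. Since $\phi$ is non-constant, $\phi(\Omega)$ is open and connected, and any point of $\phi(\Omega) \cap \mathbb T$ has an open neighborhood in $\phi(\Omega)$ meeting both $\mathbb D$ and $\mathbb C \setminus \overline{\mathbb D}$. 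Consequently $U_1 := \phi^{-1}(\mathbb D)$ and $U_2 := \phi^{-1}(\mathbb C \setminus \overline{\mathbb D})$ are non-empty open subsets of $\Omega$, and the identity theorem shows that any $f \in H$ orthogonal to every $K_z$ with $z \in U_i$ vanishes on $U_i$ and hence on all of $\Omega$. Thus $\textnormal{span}\{K_z : z \in U_i\}$ is dense in $H$ for $i=1,2$, and the Godefroy--Shapiro criterion yields hypercyclicity of $M_\phi^*$.

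The remaining direction, that recurrence of $M_\phi^*$ forces $\phi(\Omega) \cap \mathbb T \neq \emptyset$, I would prove by contrapositive. Assume $\phi(\Omega) \cap \mathbb T = \emptyset$; by connectedness either $\phi(\Omega) \subset \mathbb D$ or $\phi(\Omega) \subset \mathbb C \setminus \overline{\mathbb D}$. In the first case, $\|M_\phi^*\| = \|M_\phi\| = \|\phi\|_\infty \leq 1$, so $M_\phi^*$ is a contraction; if it were recurrent, Proposition \ref{p.bimpliesunit}(ii) would yield $\sigma(M_\phi^*) \subset \mathbb T$, contradicting the presence of eigenvalues $\overline{\phi(z)}$ of modulus strictly less than one coming from $M_\phi^* K_z = \overline{\phi(z)} K_z$. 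In the second case, $|\phi|$ is bounded below on $\Omega$, so $1/\phi$ is a non-constant bounded holomorphic function on $\Omega$ and hence a multiplier of $H$ by the standing hypothesis; this gives that $M_\phi$ is invertible with $M_\phi^{-1} = M_{1/\phi}$ and therefore $(M_\phi^*)^{-1} = M_{1/\phi}^*$. Since $(1/\phi)(\Omega) \subset \mathbb D$, the first case applied to $1/\phi$ shows that $M_{1/\phi}^*$ is not recurrent, and Proposition \ref{p.invertible} then forces $M_\phi^*$ to fail recurrence as well.

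The main technical obstacle is extracting the right conclusions from the two standing hypotheses on $H$: the norm identity $\|M_\phi\| = \|\phi\|_\infty$ is exactly what makes Case 1 reduce to a contraction argument, and the requirement that every non-constant bounded holomorphic function be a multiplier is exactly what permits the passage to $1/\phi$ in Case 2. Once these are in place, the proof is a clean synthesis of the reproducing-kernel identity $M_\phi^* K_z = \overline{\phi(z)} K_z$ with the Godefroy--Shapiro criterion on one side, and with the general spectral and contraction results for recurrent operators from Sections \ref{s.general} and \ref{s.power} on the other.
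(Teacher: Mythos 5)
Your proof is correct and complete; note that the paper itself does not prove Theorem \ref{t.adjoints} but cites \cite{CP} for it, so there is no in-paper argument to compare against. Your route --- the pointwise-convergence argument via bounded point evaluations for (i), the reproducing-kernel eigenvalue identity $M_\phi^* K_z=\overline{\phi(z)}K_z$ combined with the Godefroy--Shapiro criterion for the hypercyclicity direction of (ii), and the reduction of the case $\phi(\Omega)\subset\mathbb C\setminus\overline{\mathbb D}$ to the contraction case via $1/\phi$ together with Propositions \ref{p.bimpliesunit} and \ref{p.invertible} --- is exactly the standard one for this statement and all the steps check out (separability of $H$, needed to speak of hypercyclicity, follows automatically from the density of $\textnormal{span}\{K_z: z\in E\}$ for a countable dense $E\subset\Omega$).
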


\begin{remark} It is not hard to see that under the hypothesis of the previous theorem $M_\phi ^*$ is never rigid. Indeed, every value $\phi(z)$, $z\in \Omega$, is an eigenvalue of $M_\phi ^*$; see \cite{GoSh}. By Proposition \ref{p.rigspec} we must have $|\phi(z)|\leq 1$. On the other hand if $M_\phi ^*$ is rigid then by the previous proposition $M_\phi ^*$ is hypercyclic and then we necessarily have that $\|M_\phi ^*\|=\|M_\phi\|=\|\phi\|_\infty>1$. Since these two conditions are mutually exclusive we see that $M_\phi ^*$ is not rigid.

On the other hand, again under the assumptions of Theorem~\ref{t.adjoints}, the multiplication operator $M_{\psi }$ is recurrent if and only if $M_\psi ^*$ is recurrent if and only if $\psi$ is equal to a unimodular constant, everywhere on $\Omega$. In the latter case $M_{\psi}, M_\psi ^*$ are, in fact, uniformly rigid. 
\end{remark}

\subsection{Multiplication operators on Banach spaces of holomorphic functions in the unit disk.}

Let $X$ be a a non-trivial Banach space of functions holomorphic in the unit disk $\mathbb D$. A function $\phi\in H(\mathbb D)$ is said to be a (pointwise) multiplier of $X$ into $X$ if $\phi f\in X$ for every $f\in X$. Then the multiplication operator $M_{\phi }:X\to X$ is defined by $M_{\phi }f=\phi f$ for every $f\in X$. Assuming in addition that each point-evaluation functional is bounded on $X$, it follows that: (i) $M_{\phi }$ is a bounded operator, (ii) $\phi $ is bounded on $\mathbb D$, i.e. $\phi \in H^{\infty}(\mathbb D)$ and (iii) $\| \phi \|_{\infty}\leq \| M_{\phi } \| $, see for instance \cite{ADMV}.

\subsection{Multiplication operators on Hardy and Bergman spaces.}

For $1\leq p<\infty $ the Hardy space $H^p$ consists of all holomorphic functions $f$ in the unit disk $\mathbb D$ such that 
\begin{align*}
	\| f\|_{H^p(\mathbb D)}\coloneqq \sup_{0\leq r<1}{\left( \frac{1}{2\pi }\int_0^{2\pi }|f(re^{i\theta })|^pd\theta \right) }^{1/p}<+\infty . 
\end{align*}
Equipped with this norm $H^p(\mathbb D)$ becomes a Banach space and for every $f\in H^p$ we have that 
\begin{align}
	\label{e.growthHp} |f(z)|\leq \frac{\| f\|_{H^p(\mathbb D)}}{(1-|z|^2)^{1/p}}, \quad z\in \mathbb D;
\end{align}
see \cite{Duren}. The last estimate implies that all point-evaluation functionals are bounded.

For $1\leq p<\infty $ the Bergman space $A^p(\mathbb D)$ consists of all holomorphic functions $f$ in the unit disk $\mathbb D$ such that 
\begin{align*}
	\| f\|_{A^p(\mathbb D)}\coloneqq \left( \int_{\mathbb D}|f(z)|^pdA(z) \right)^{1/p} <\infty , 
\end{align*}
where $dA(z)=\frac{1}{\pi}dxdy$ is the normalized area measure in $\mathbb D$ with $A(\mathbb D)=1$. Then $A^p$ becomes a Banach space and for every $f\in A^p(\mathbb D)$ we have that 
\begin{align}\label{e.growthAp}
	|f(z)|\leq \frac{\| f\|_{A^p (\mathbb D)}}{(1-|z|^2)^{2/p}}, \quad z\in \mathbb D , 
\end{align}
which in turn implies that all point-evaluation functionals of $A^p(\mathbb D)$ are bounded. For the growth estimate above see for example \cite{ADMV}.

We will also consider the Dirichlet space on the unit disc, denoted by $\mathcal D$, and consisting of all the holomorphic functions $f$ on the unit disc such that 
\begin{align*}
	\|f\|_{\mathcal D} ^2 \coloneqq |f(0)|^2 +\int_{\mathbb D} |f'(z)|^2 dA(z)<+\infty. 
\end{align*}

Finally, we consider the Bloch space on the unit disc, denoted by $\mathcal B$, and consisting of all functions $f$, holomorphic on $\mathcal D$, such that 
\begin{align*}
	\|f\|_{\mathcal B}\coloneqq |f(0)|+\sup_{z\in\mathbb D} (1-|z|^2)|f'(z)|^2<+\infty. 
\end{align*}
It is well known that functions both in the Dirichlet space $\mathcal D$ as well as in the Bloch space $B$ satisfy growth estimates similar to \eqref{e.growthHp} and thus the corresponding point evaluation functionals are bounded. See for example \cite{ADMV}.

\begin{theorem}
	Let $1\leq p<\infty$. Consider a multiplier $\phi$ of $H^p(\mathbb D)$ into $H^p(\mathbb D)$ and the corresponding multiplication operator $M_{\phi}:H^p(\mathbb D)\to H^p(\mathbb D)$. The following are equivalent. 
	\begin{itemize}
		\item[(i)] $M_\phi $ is recurrent. 
		\item[(ii)] $M_\phi$ is rigid. 
		\item[(iii)] $M_\phi$ is uniformly rigid. 
		\item [(iv)] There exists $a\in \mathbb T$ such that $\phi (x)=a$ for every $x\in \mathbb D$. 
	\end{itemize}
\end{theorem}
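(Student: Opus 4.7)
The easy implications $\text{(iv)}\Rightarrow\text{(iii)}\Rightarrow\text{(ii)}\Rightarrow\text{(i)}$ are immediate: if $\phi\equiv a\in\mathbb T$ then $M_\phi=aI$, and choosing positive integers $k_n\to+\infty$ with $a^{k_n}\to 1$ gives $\|M_\phi^{k_n}-I\|=|a^{k_n}-1|\to 0$, so $M_\phi$ is uniformly rigid. The substantive content is $\text{(i)}\Rightarrow\text{(iv)}$, which I plan to prove by showing $|\phi(z)|=1$ for every $z\in\mathbb D$; since $\phi$ is holomorphic, the open mapping theorem (or equivalently the maximum modulus principle) then forces $\phi$ to be a unimodular constant.

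Assume $M_\phi$ is recurrent and fix $z_0\in\mathbb D$. The growth estimate \eqref{e.growthHp} implies that the point evaluation $\mathrm{ev}_{z_0}:H^p(\mathbb D)\to\mathbb C$, $f\mapsto f(z_0)$, is a bounded linear functional. Hence $V_{z_0}\coloneqq\{f\in H^p(\mathbb D):f(z_0)\neq 0\}$ is open, and its complement is a proper closed subspace of $H^p(\mathbb D)$ since it misses the constant function $1$; consequently $V_{z_0}$ is open and dense. By Proposition~\ref{p.dense}, $\Rec(M_\phi)$ is a dense $G_\delta$ set, so Baire's theorem provides some $f\in\Rec(M_\phi)\cap V_{z_0}$. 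By definition of recurrence there is a strictly increasing sequence of positive integers $(k_n)_{n\in\mathbb N}$ with $\phi^{k_n}f\to f$ in $H^p(\mathbb D)$. Applying $\mathrm{ev}_{z_0}$ gives $\phi(z_0)^{k_n}f(z_0)\to f(z_0)$, and since $f(z_0)\neq 0$ we obtain $\phi(z_0)^{k_n}\to 1$, which is possible only when $|\phi(z_0)|=1$.

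Since $z_0\in\mathbb D$ was arbitrary, this yields $|\phi|\equiv 1$ on $\mathbb D$ and completes the argument. The only potentially delicate point is to secure, for any chosen $z_0$, a recurrent vector that is nonzero at $z_0$; this is exactly what the Baire category step above delivers, so no further obstacle arises.
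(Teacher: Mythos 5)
Your proof is correct and follows essentially the same route as the paper: use the growth estimate \eqref{e.growthHp} (i.e.\ boundedness of point evaluations) to turn norm convergence $M_\phi^{k_n}f\to f$ into $\phi(z_0)^{k_n}\to 1$, conclude $|\phi|\equiv 1$, and finish with the maximum modulus principle. The only (harmless) variation is that you select, for each $z_0$, a recurrent vector with $f(z_0)\neq 0$ via density of $\Rec(M_\phi)$, whereas the paper fixes a single nonzero recurrent vector and disposes of its zero set by analytic continuation; your version in fact sidesteps a small imprecision in the paper's claim of uniform convergence of $\phi^{k_n}$ on disks that may contain zeros of $f$.
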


\begin{proof}
	We only have to prove that (i) implies (iv) since all the other implications are trivial. Consider $f\in \Rec(M_{\phi })$ so that $f$ is not identically zero. We have 
	\begin{align*}
		|\phi (z)^{n}f(z)-f(z)|\leq \frac{ \| M_{\phi }^nf-f\|_{H^p(\mathbb D)} }{(1-|z|^2)^{1/p}}, \quad z\in \mathbb D . 
	\end{align*}
	There exists a strictly increasing sequence of positive integers $(k_n)_{n\in\mathbb N}$ such that $\|M_\phi ^{k_n}f-f\|_{H^p(\mathbb D)} \to 0$ as $n\to+\infty$. Therefore, $\phi (z)^{k_n}f(z)\to f(z)$ uniformly on compact subsets of $\mathbb D$ and since $f$ is not identically zero we conclude that $\phi (z)^{k_n}\to 1$ uniformly on any open disk $B$ such that $\overline{B} \subset \mathbb D$. It follows that $|\phi (z) |=1$ for every $z\in B$ and by the maximum modulus principle and analytic continuation we conclude that $\phi $ is a unimodular constant on $\mathbb D$. 
\end{proof}

\begin{theorem}
	Let $X$ be either the Bergman space $A^p(\mathbb D)$, $1\leq p<+\infty$, the Dirichlet space $\mathcal D$ or the Bloch space $\mathcal B$. Consider a multiplier $\phi$ of $X$ into $X$ and the corresponding multiplication operator $M_{\phi }:X\to X$. The following are equivalent. 
	\begin{itemize}
		\item[(i)] $M_\phi $ is recurrent. 
		\item[(ii)] $M_\phi$ is rigid. 
		\item[(iii)] $M_\phi$ is uniformly rigid. 
		\item [(iv)] There exists $a\in \mathbb T$ such that $\phi (x)=a$ for every $x\in \mathbb D$. 
		\item [(v)] $\phi$ is an isometric multiplier of $X$. 
	\end{itemize}
\end{theorem}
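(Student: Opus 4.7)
The overall strategy is to establish the trivial implications first, then obtain (i) $\Rightarrow$ (iv) by adapting the Hardy space argument almost verbatim, and finally to prove (v) $\Leftrightarrow$ (iv) using space-specific test-function computations.

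The implications (iv) $\Rightarrow$ (iii) $\Rightarrow$ (ii) $\Rightarrow$ (i) are immediate: if $\phi\equiv a$ with $|a|=1$, then $\|M_\phi^n-I\|=|a^n-1|\to 0$ along any sequence with $a^{k_n}\to 1$. Likewise (iv) $\Rightarrow$ (v) is trivial, since $M_\phi f=af$ gives $\|M_\phi f\|_X=|a|\|f\|_X=\|f\|_X$.

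For (i) $\Rightarrow$ (iv), I would replay the Hardy-space proof, replacing \eqref{e.growthHp} with the corresponding growth estimate for $X$: \eqref{e.growthAp} for $A^p(\mathbb D)$, and the analogous pointwise bounds that hold for $\mathcal D$ and $\mathcal B$ (as the paper has already noted). Pick $f\in\Rec(M_\phi)$, $f\not\equiv 0$, and a strictly increasing sequence $(k_n)_{n\in\mathbb N}$ with $\|M_\phi^{k_n}f-f\|_X\to 0$. The growth estimate yields
\begin{align*}
\bigl|\phi(z)^{k_n}f(z)-f(z)\bigr|\leq \frac{\|M_\phi^{k_n}f-f\|_X}{w(z)},
\end{align*}
where $w(z)$ is a weight (for instance $(1-|z|^2)^{2/p}$ on $A^p$) bounded below on compact subsets of $\mathbb D$. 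Hence $\phi(z)^{k_n}f(z)\to f(z)$ uniformly on compact subsets of $\mathbb D$. On any open disk $B$ on which $f$ does not vanish this forces $\phi^{k_n}\to 1$ uniformly, so $|\phi|=1$ on $B$, and then the maximum modulus principle together with analytic continuation forces $\phi$ to be a unimodular constant throughout $\mathbb D$.

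For (v) $\Rightarrow$ (iv) the argument is cleanest on $A^p$: setting $f\equiv 1$ in the isometry identity gives $\int_{\mathbb D}|\phi|^p\,dA=\|1\|_{A^p}^p=1$, while boundedness of point evaluations combined with $\|M_\phi\|=1$ yields $|\phi(z)|\leq 1$ for every $z\in\mathbb D$; these two facts force $|\phi|^p\equiv 1$ almost everywhere, hence everywhere by continuity, and the maximum modulus principle then makes $\phi$ a unimodular constant. For $\mathcal D$ and $\mathcal B$ the same scheme works, using the test functions $f\equiv 1$ and $f(z)=z^n$ (or $f(z)=z$), together with the explicit form of the norm, to deduce that $\phi'$ must vanish identically; once $\phi$ is constant, the isometry condition forces $|\phi|=1$.

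The main obstacle is this last step for $\mathcal D$ and $\mathcal B$: unlike the $A^p$ case, the norm is not a pure $L^p$ integral, so the identities $\|\phi^n\|_X=\|1\|_X$ and $\|\phi z^n\|_X=\|z^n\|_X$ produce expressions involving both $\phi$ and $\phi'$ that must be rearranged to conclude $\phi'\equiv 0$. This requires a more careful asymptotic analysis (as $n\to\infty$, terms of the form $n^2\int|\phi|^{2n-2}|\phi'|^2\,dA$ must be controlled using the strict inequality $|\phi(z)|<1$ that follows from the maximum modulus principle whenever $\phi$ is non-constant). The resulting characterization — that the isometric multipliers of $\mathcal D$ and $\mathcal B$ are exactly the unimodular constants — is well known, and I would invoke it to close the chain of equivalences.
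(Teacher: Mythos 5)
Your proposal is correct and follows essentially the same route as the paper: the core implication (i) $\Rightarrow$ (iv) is the Hardy-space growth-estimate argument transplanted to $A^p(\mathbb D)$, $\mathcal D$ and $\mathcal B$, and the equivalence (iv) $\Leftrightarrow$ (v) is ultimately delegated to the known characterization of isometric multipliers, which is exactly what the paper does by citing Aleman--Duren--Mart\'in--Vukoti\'c. The remaining implications are trivial in both treatments.
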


\begin{proof}
	We give the proof in the case of the Bergman space $A^p(\mathbb D)$. For the other two cases the proof is similar using the corresponding growth estimates instead of \eqref{e.growthAp}. We first prove that (i) implies (iv). Using \eqref{e.growthAp} and arguing as in the proof of the previous theorem the desired implication follows. In \cite{ADMV} it is proved that (iv) is equivalent to (v). Now, all other implications are trivial. This completes the proof of the theorem. 
\end{proof}

\subsection{Multiplication operators on \texorpdfstring{$L^2(X,\mu)$}{L2(X,mu)}.}

Let $(X,\mu)$ be a measure space where $\mu$ is a non-negative finite Borel measure. For $\phi\in L^\infty(X,\mu)$ we define the multiplication operator $M_\phi:L^2(X,\mu)\to L^2(X,\mu)$ as $M_\phi(f)=\phi f$. Obviously $M_\phi$ is a bounded linear operator with $\|M_\phi\|\leq \|\phi\|_{L^\infty(X,\mu)}$. We have the following.

\begin{theorem}
	\label{t.multcond} Let $M_\phi:L^2(X,\mu)\to L^2(X,\mu)$ be the multiplication operator with symbol $\phi$. 
	\begin{itemize}
		\item[(A)] The following are equivalent. 
		\begin{itemize}
			\item [(i)] $M_\phi$ is recurrent. 
			\item [(ii)] $M_\phi$ is rigid. 
			\item [(iii)] There exists a strictly increasing sequence of positive integers $(k_n)_{n\in\mathbb N}$ such that 
			\begin{equation}
				\label{e.cond} \phi(x)^{k_n}\to 1\quad\text{as}\quad n\to +\infty,\quad \mu \text{-almost everywhere}. 
			\end{equation}
		\end{itemize}
		\item[(B)] The following are equivalent. 
		\begin{itemize}
			\item [(i)] $M_\phi$ is uniformly rigid. 
			\item [(ii)] There exists a strictly increasing sequence of positive integers $(k_n)_{n\in\mathbb N}$ such that 
			\begin{equation*}
				\|\phi^{k_n}-1\|_{L^\infty(X)} \to 0\quad\text{as}\quad n\to +\infty. 
			\end{equation*}
		\end{itemize}
	\end{itemize}
	In particular if $M_\phi$ is recurrent then $|\phi|=1$ $\mu$-almost everywhere. 
\end{theorem}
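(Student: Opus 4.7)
The overall strategy is to reduce every implication to two observations: first, that on $L^2(X,\mu)$ the operator norm of a multiplication operator satisfies $\|M_\psi\|=\|\psi\|_{L^\infty(X,\mu)}$, and second, that once we know $|\phi|=1$ $\mu$-a.e., the operator $M_\phi$ becomes a surjective isometry, so Proposition~\ref{p.bimpliesunit} forces $\Rec(M_\phi)=L^2(X,\mu)$. Part (B) follows immediately from the norm identity, since $\|M_\phi^{k_n}-I\|=\|M_{\phi^{k_n}-1}\|=\|\phi^{k_n}-1\|_{L^\infty(X,\mu)}$.

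For part (A), the implication (ii)$\Rightarrow$(i) is trivial. To obtain (iii)$\Rightarrow$(ii), I would first note that if $\phi(x)^{k_n}\to 1$ pointwise $\mu$-a.e., then necessarily $|\phi(x)|=1$ $\mu$-a.e., since $|\phi(x)|<1$ forces $\phi(x)^{k_n}\to 0$ and $|\phi(x)|>1$ forces $|\phi(x)^{k_n}|\to\infty$. Given $|\phi|=1$ $\mu$-a.e., for any $f\in L^2(X,\mu)$ the integrand $|\phi^{k_n}-1|^2|f|^2$ is dominated by the integrable function $4|f|^2$, so the dominated convergence theorem yields $\|M_\phi^{k_n}f-f\|_2\to 0$ uniformly over $f$ in the pointwise sense required for rigidity.

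The main work lies in (i)$\Rightarrow$(iii), and the key step is the preliminary claim that recurrence of $M_\phi$ already forces $|\phi|=1$ $\mu$-a.e. For $\alpha>1$, set $E_\alpha\coloneqq\{x\in X:|\phi(x)|>\alpha\}$. If $\mu(E_\alpha)>0$, then since $\Rec(M_\phi)$ is dense in $L^2(X,\mu)$, I can find a recurrent $f$ with $\int_{E_\alpha}|f|^2\,d\mu>0$; but then $\|M_\phi^nf\|_2^2\geq\alpha^{2n}\int_{E_\alpha}|f|^2\,d\mu\to\infty$, contradicting $\|M_\phi^{k_n}f\|_2\to\|f\|_2$ along the recurrence sequence. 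A symmetric argument on $F_\beta\coloneqq\{x\in X:|\phi(x)|<\beta\}$ for $\beta<1$ produces a recurrent $f$ for which $\int_{F_\beta}|M_\phi^nf|^2\,d\mu\leq\beta^{2n}\int_{F_\beta}|f|^2\,d\mu\to 0$, again contradicting recurrence. Hence $|\phi|=1$ $\mu$-a.e., which also yields the last assertion of the theorem.

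With $|\phi|=1$ $\mu$-a.e., the operator $M_\phi$ is an isometry of $L^2(X,\mu)$, in particular power bounded; since $M_\phi$ is also recurrent by assumption, Proposition~\ref{p.bimpliesunit}(v) gives $\Rec(M_\phi)=L^2(X,\mu)$. In particular the constant function $\mathbf{1}\in L^2(X,\mu)$ (recall $\mu$ is finite) is recurrent, so there exists a strictly increasing sequence $(k_n)_{n\in\mathbb N}$ with $\|M_\phi^{k_n}\mathbf 1-\mathbf 1\|_2=\|\phi^{k_n}-1\|_2\to 0$. Passing to a further subsequence that converges $\mu$-a.e.\ yields (iii). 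The only genuinely delicate step in the plan is the a.e.\ unimodularity of $\phi$; everything else is either a dominated convergence argument or a direct appeal to the norm identity and Proposition~\ref{p.bimpliesunit}.
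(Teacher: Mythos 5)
Your proposal is correct and follows essentially the same route as the paper: establish $|\phi|=1$ $\mu$-a.e.\ from recurrence by testing recurrent vectors on the sets where $|\phi|$ is bounded away from $1$, deduce $\Rec(M_\phi)=L^2(X,\mu)$ via Proposition~\ref{p.bimpliesunit}, apply recurrence to the constant function $\mathbf 1$ and pass to an a.e.\ convergent subsequence for (iii), use dominated convergence for (iii)$\Rightarrow$(ii), and the identity $\|M_\phi^n-I\|=\|\phi^n-1\|_{L^\infty}$ for (B). The only difference is cosmetic: you make explicit the observation (implicit in the paper) that pointwise convergence $\phi^{k_n}\to 1$ already forces $|\phi|=1$ a.e., which is what licenses the dominating function $4|f|^2$.
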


\begin{proof}
	We begin by showing the equivalences in (A). Let $\phi\in L^\infty(X,\mu)$ satisfy \eqref{e.cond}. Applying Lebesgue's dominated convergence theorem we immediately get that $M_\phi$ is rigid.
	
	We now prove that (i) implies (iii) so we assume that $M_\phi$ is recurrent. We first show that $|\phi|=1$ for $\mu$-almost every $x\in X$. For any positive integer $m$ let $E_m\coloneqq\{x\in X:|\phi(x)|>1+\frac{1}{m}\}$ and assume, for the sake of contradiction, that $\mu(E_m)>0$ for some $m$. Since $M_\phi$ is recurrent and $E_m$ has positive measure there exists a recurrent vector $f\in L^2(X,\mu)$ which is not identically zero on $E_m$. Then for any strictly increasing sequence of positive integers $(l_n)_{n\in\mathbb N}$ we have 
	\begin{align*}
		\|M_\phi ^{l_n} f-f\|_{L^2(X,\mu)}\geq\bigg( \int_{E_m}|\phi(x)^{l_n}-1|^2|f(x)|^2d\mu(x)\bigg)^\frac{1}{2}\geq \Abs{\big(1+{1}/{m}\big)^{l_n}-1} \int_{E_m} |f(x)|^2\to +\infty 
	\end{align*}
	as $n\to +\infty$ which is a contradiction since $f$ was a recurrent vector. Thus $\mu(E_m)=0$ for all positive integers $m$ which shows that $\mu(\{x\in X:|\phi(x)|>1\})=0.$ A similar argument shows that $\mu(\{x\in X:|\phi(x)|<1\})=0$ so that $|\phi|=1$, $\mu$-almost everywhere.
	
	Observe now that $M_\phi$ is unitary since $|\phi|=1$ $\mu$-almost everywhere and this implies that every $f\in L^2(X,\mu)$ is a recurrent vector for $M_\phi$. In particular the constant function $1\in L^2(X,\mu)$ is a recurrent vector thus there exists a strictly increasing sequence of positive integers $(k_n)_{n\in\mathbb N}$ such that 
	\begin{align*}
		\|\phi^{k_n}-1\|_{L^2(X,\mu)}\to 0\quad\text{as}\quad n\to +\infty. 
	\end{align*}
	By standard arguments we get the existence of a subsequence which we also call $(k_n)_{n\in\mathbb N}$ such that $\phi^{k_n}(x)\to 1$, for $\mu$-almost every $x\in X$. For (B) it is enough to note that for every $n\in\mathbb N$ we have $\|M_\phi ^n-I\|=\|\phi^n-1\|_{L^\infty(X)}$. 
\end{proof}

We would like to have a more hands-on characterization of the functions $\phi$ that give rise to recurrent operator $M_\phi$. For example, it is straightforward to see that for every constant function $\phi$ such that $\phi(x)=a$ for some $a\in \mathbb T$, $\mu$-almost everywhere, the operator $M_\phi$ is recurrent; but are these the only ones? It turns out that the answer is \emph{no} in general, and that one cannot expect a characterization of the symbols $\phi$ that give recurrent multiplication operators on $L^2(X,\mu)$, in the case of a general measure space $(X,\mu)$. We illustrate this by two examples. 

\begin{example}
	Let $X=\mathbb T$ and $d\mu=d\theta$ be the Lebesgue measure on the circle. By the previous analysis we immediately restrict our attention to measurable functions $\phi:\mathbb T\to \mathbb T$. It is clear that constant unimodular functions as well as functions that take a finite number of values on $\mathbb T$ give rise to recurrent operators. Instead of showing this, which is an easy exercise, we present a slightly more general example below. 
\end{example}

\begin{example}
	\label{e.countable} Let $\phi:\mathbb T\to \mathbb T$ be a measurable function such that the set $\phi(\mathbb T)=\{\phi(t):\ t\in\mathbb T\}$ is countable. Then the multiplication operator $M_\phi:L^2(\mathbb T,d\theta)\to L^2(\mathbb T,d\theta)$ is recurrent. Indeed, by Theorem~\ref{t.multcond} it suffices to find a strictly increasing sequence of positive integers $(k_n)_{n\in \mathbb N}$ such that $\phi(x)^{k_n}\to 1$, $\mu$-almost everywhere. Let us write $\phi(\mathbb T)=\{e^{2\pi i \theta_k}: k=1,2,\ldots \}$ with $(\theta_k)_{k\in\mathbb N}\subset \R$. We inductively construct a strictly increasing sequence $(k_n)_{n\in\mathbb N}$ such that 
	\begin{align*}
		|e^{2\pi i k_n\theta_j}-1|<1/n\quad\text{for all}\quad j=1,2\ldots,n. 
	\end{align*}
	Therefore $\phi(x)^{k_n}\to 1$ as $n\to +\infty$ for all $x\in\mathbb T$, so we are done. 
\end{example}

\begin{example}
	There exists a non-constant continuous function $\phi:[0,1] \to \mathbb T$ such that $M_\phi$ is recurrent on $L^2([0,1],d\theta)$. To see this, let $C$ denote the triadic Cantor set on $[0,1]$ and $f:[0,1]\to [0,1]$ be the \emph{Cantor-Lebesgue} function as constructed for example in \cite{WZ}*{page 35}. Since $f$ is continuous the function $\phi(t)\coloneqq e^{2\pi if(t)}$ is a well defined, continuous function. Furthermore the set $\{\phi(t):t\in\mathbb [0,1]\setminus C\}$ is countable. As in Example \ref{e.countable} we can find a strictly increasing sequence of positive integers $(k_n)_{n\in\mathbb N}$ such that $\phi(t)^{k_n}\to 1$ for every $t\in [0,1]\setminus C$, that is, almost everywhere since $|C|=0$. By Theorem~\ref{t.multcond} we get that $M_\phi$ is recurrent. 
\end{example}

\begin{example}
	Let us denote by $C(\mathbb T)$ the set of continuous functions $f:\mathbb T\to \mathbb C$ and set $S\coloneqq\{f\in\mathbb C(\mathbb T):|f(z)|=1\text{ for all }z\in\mathbb T \}$. Recall that a closed set $K\subset \mathbb T$ is called a \emph{Kronecker set} if for every $f\in S$ and every $\epsilon>0$ there exists a positive integer $n$ such that 
	\begin{align*}
		\sup_{z\in K}|f(z)- z^n|<\epsilon. 
	\end{align*}
	That is, $K$ is Kronecker if we can approximate every continuous unimodular function on $\mathbb T$ by characters, uniformly on $K$. For a detailed discussion of Kronecker sets see for example \cite{Rudin}. By Theorem~\ref{t.multcond} we get that the function $\chi_1(z)\coloneqq z$, $z\in\mathbb T$, gives rise to a uniformly rigid operator $M_{\chi_1}:L^2(K,d\theta)\to L^2(K,d\theta)$, whenever $K$ is a Kronecker set. Automatically we get that all the characters $\chi_m(z)\coloneqq z^m$ give uniformly rigid operators on $L^2(K,d\theta)$ by using the simple estimate $|(z^m)^n-1|\leq m|z^n-1|,$ whenever $z\in\mathbb C$ with $|z|=1$.
\end{example}

\section{Unitary, Normal, Hyponormal and \texorpdfstring{$m$}{m}-isometric operators} \label{s.unitary} In this section we fix $H$ to be a \emph{separable} Hilbert space over $\C$. The main theme of this paragraph is that if a recurrent operator on a Hilbert space has ``sufficient structure'' then it reduces to a unitary operator. A first instance of this heuristic is contained in the following proposition.

Recall that an operator $T:H\to H$ is \emph{normal} if $TT^*=T^*T$, where $T^*$ is the Hilbert space adjoint of $T$. 

\begin{proposition}
	\label{p.normal} If a normal operator $T:H\to H$ is recurrent then $T$ is unitary. 
\end{proposition}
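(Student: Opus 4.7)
My plan is to use the spectral theorem for the positive self-adjoint operator $P \coloneqq T^*T$ to show first that $T$ is an isometry, and then to conclude via normality that $T$ is unitary. The crucial point is that, since $T$ is normal, $T^n$ is normal for every positive integer $n$, and moreover $(T^n)^*T^n = (T^*T)^n = P^n$ by repeated commutation of $T$ and $T^*$. Consequently, for every $x \in H$ and every $n \in \mathbb N$,
\[
\|T^n x\|^2 = \langle P^n x, x\rangle = \int_{[0,\|P\|]} t^n\, d\mu_x(t),
\]
where $\mu_x(B) \coloneqq \|E_P(B) x\|^2$ is a finite positive Borel measure on $[0,\|P\|]$, $E_P$ denoting the spectral resolution of $P$.

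Next, I would fix a recurrent vector $x \in \Rec(T)$, together with a strictly increasing sequence of positive integers $(k_n)_{n \in \mathbb N}$ satisfying $T^{k_n} x \to x$. In particular $\int t^{k_n}\, d\mu_x(t) = \|T^{k_n} x\|^2 \to \|x\|^2 = \mu_x([0,\|P\|])$. The claim is that $\mu_x$ is concentrated at $\{1\}$. Indeed, if $\mu_x((1,\infty))>0$, then $\mu_x([1+\epsilon, \|P\|])>0$ for some $\epsilon>0$ and therefore
\[
\int t^{k_n}\, d\mu_x(t) \geq (1+\epsilon)^{k_n}\,\mu_x([1+\epsilon, \|P\|]) \longrightarrow +\infty,
\]
a contradiction. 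Thus $\mu_x$ is supported in $[0,1]$; since $t^{k_n} \to \mathbf 1_{\{1\}}(t)$ pointwise on $[0,1]$ and is dominated by $1$, dominated convergence yields $\mu_x(\{1\}) = \lim_n \int t^{k_n} d\mu_x = \|x\|^2 = \mu_x([0,1])$. Hence $\mu_x$ is supported at $\{1\}$, so $Px = \int t\, dE_P(t)\,x = x$, i.e.\ $T^*Tx = x$, giving $\|Tx\| = \|x\|$.

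Finally, Proposition \ref{p.dense} ensures that $\Rec(T)$ is dense in $H$, so continuity of the bounded operator $T^*T - I$ extends the identity $T^*Tx = x$ from $\Rec(T)$ to all of $H$. Therefore $T^*T = I$, and combined with normality this gives $TT^* = T^*T = I$, so $T$ is unitary. The main obstacle is the passage from recurrence along individual vectors to the global identity $T^*T = I$; the mechanism that makes this work is that normality permits writing $\|T^n x\|^2$ as an integral of $t^n$ against a positive measure, and such integrals cannot return along a subsequence to their initial value $\mu_x([0,\|P\|])$ unless the underlying measure is concentrated precisely at $t=1$.
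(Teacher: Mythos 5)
Your argument is correct, and it takes a genuinely different route from the paper. The paper invokes the spectral theorem to represent $T$ as a multiplication operator $M_\phi$ on some $L^2(\mu)$, then cites its characterization of recurrent multiplication operators (Theorem~\ref{t.multcond}) to get $|\phi|=1$ $\mu$-a.e., hence $\|T\|\leq 1$, and finally applies Proposition~\ref{p.bimpliesunit}\,(i) (recurrent contractions are surjective isometries). You instead apply the spectral theorem only to the single positive operator $P=T^*T$, exploit normality through the identity $(T^n)^*T^n=P^n$ to write $\|T^nx\|^2=\int t^n\,d\mu_x(t)$, and show directly that recurrence of $x$ forces $\mu_x$ to be a point mass at $1$, whence $T^*Tx=x$ on the dense set $\Rec(T)$ and so $T^*T=I=TT^*$ by normality. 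Each step checks out: the growth estimate on $[1+\epsilon,\|P\|]$ rules out mass above $1$, dominated convergence rules out mass below $1$, and the passage from $\mu_x=\|x\|^2\delta_1$ to $Px=x$ is immediate from the spectral projections. What your approach buys is self-containment: you bypass both Theorem~\ref{t.multcond} and the contraction-to-isometry machinery of Proposition~\ref{p.bimpliesunit} (in particular the surjectivity argument there, since $TT^*=T^*T=I$ gives invertibility for free), and you only need the projection-valued-measure form of the spectral theorem for a positive self-adjoint operator rather than the multiplication-operator model for normal operators, so separability of $H$ plays no role. The paper's route, by contrast, reuses results it has already established and fits the section's broader theme of reducing recurrence questions to multiplication operators.
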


\begin{proof}
	Since $T$ is normal there exist a finite positive Borel measure $\mu $ on the spectrum $\sigma (T)$ and a function $\phi \in L^{\infty }(\mu)$ such that $T$ is unitarily equivalent to $M_{\phi}:L^2(\mu )\to L^2(\mu )$, where $M_{\phi }f=\phi f$ for $f\in L^2(\mu)$. By Theorem~\ref{t.multcond} it follows that $|\phi |=1$ $\mu$-almost everywhere on $\sigma (T)$, therefore $\| M_{\phi }\|={\| \phi \| }_{\infty }=1$. Since $T$ is unitarily equivalent to $M_{\phi }$ we get that $\| T\| \leq 1$ and by Proposition \ref{p.bimpliesunit}, (i), we conclude that $T$ is unitary. 
\end{proof}

It is classical that a normal operator $T:H\to H$ is cyclic if and only if there exists a finite positive Borel measure $\mu $ on the spectrum $\sigma (T)$ so that $T$ is unitarily equivalent to $M_z:L^2(\mu)\to L^2(\mu)$, where $M_zf=zf$ for $f\in L^2(\mu)$. See for example \cite{Con}. Using this, Theorem~\ref{t.multcond} and the fact that recurrence is preserved by unitary equivalence we immediately get the following corollary. 

\begin{corollary}
	Let $T:H\to H$ be a normal operator. Then $T$ is recurrent and cyclic if and only if there exist a finite positive Borel measure $\mu $ on the spectrum $\sigma (T)$ and a strictly increasing sequence of positive integers $(k_n)_{n\in\mathbb N}$ such that $T$ is unitarily equivalent to $M_z:L^2(\mu)\to L^2(\mu)$, where $M_zf=zf$ for $f\in L^2(\mu)$ and 
	\begin{align*}
		\int_{\sigma (T)}|z^{k_n}-1|^2d\mu (z) \to 0. 
	\end{align*}
\end{corollary}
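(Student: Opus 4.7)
The plan is to combine the classical spectral-theoretic description of normal cyclic operators with Proposition~\ref{p.normal} and Theorem~\ref{t.multcond}.

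For the forward direction, I would start by assuming that $T$ is a normal, recurrent, and cyclic operator. By the classical theorem for normal cyclic operators (recalled just before the corollary), $T$ is unitarily equivalent to $M_z: L^2(\mu) \to L^2(\mu)$ for some finite positive Borel measure $\mu$ on $\sigma(T)$. Since $T$ is recurrent and normal, Proposition~\ref{p.normal} gives that $T$ is unitary, hence $\sigma(T)\subseteq \mathbb T$ and, equivalently, $|z|=1$ for $\mu$-almost every $z$. Because recurrence is preserved under unitary equivalence, $M_z$ is recurrent on $L^2(\mu)$. Applying Theorem~\ref{t.multcond} (with symbol $\phi(z)=z$), there exists a strictly increasing sequence of positive integers $(k_n)_{n\in\mathbb N}$ such that $z^{k_n}\to 1$ $\mu$-almost everywhere on $\sigma(T)$. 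Since $|z^{k_n}-1|^2 \leq 4$ uniformly and $\mu$ is a finite measure, the dominated convergence theorem yields
\begin{align*}
\int_{\sigma(T)}|z^{k_n}-1|^2\,d\mu(z)\to 0,
\end{align*}
as desired.

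For the reverse direction, suppose $T$ is unitarily equivalent to $M_z$ on $L^2(\mu)$ with $\mu$ as above, and that the integral condition holds for some strictly increasing sequence $(k_n)$. The classical characterization immediately gives that $T$ is cyclic and normal (both properties are invariant under unitary equivalence, and $M_z$ is normal with the constant function $1$ as a cyclic vector in the sense required by the classical description). It remains to show $T$ is recurrent, for which it suffices to show $M_z$ is recurrent. The integral hypothesis says exactly that $z^{k_n}\to 1$ in $L^2(\mu)$; passing to a subsequence we may assume $z^{k_n}\to 1$ $\mu$-almost everywhere. Applying Theorem~\ref{t.multcond} in the converse direction (condition (iii) implies (i) in part (A)), we conclude that $M_z$ is recurrent, hence so is $T$.

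There is no real obstacle in this argument; the corollary is essentially an assembly of Proposition~\ref{p.normal}, Theorem~\ref{t.multcond}, and the standard model theorem for normal cyclic operators. The only point requiring mild care is the passage between $L^2$-convergence and $\mu$-a.e.\ convergence (one direction via dominated convergence using that $\mu$ is finite and $|z|=1$ $\mu$-a.e., the other via extraction of a pointwise convergent subsequence), but both are routine.
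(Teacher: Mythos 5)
Your proposal is correct and follows essentially the same route as the paper, which obtains the corollary directly by combining the classical model theorem for cyclic normal operators with Theorem~\ref{t.multcond} and the invariance of recurrence under unitary equivalence. The only details you add beyond the paper's one-line derivation are the routine passages between $L^2(\mu)$-convergence and $\mu$-a.e.\ convergence of $z^{k_n}$ (dominated convergence in one direction, extraction of an a.e.\ convergent subsequence in the other), and these are handled correctly.
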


We now turn our attention to \emph{hyponormal} operators, namely, operators $T:H\to H$ having the property $\| T^*h\| \leq \| Th\|$ for every $h\in H$. The next proposition extends Proposition \ref{p.normal}. Its proof necessarily avoids the spectral theorem for normal operators. Instead, it relies on certain inequalities for orbits of hyponormal operators established by Bourdon in \cite{Bourdon}. 

\begin{proposition}
	\label{p.normalimpliesunit} If a hyponormal operator $T:H\to H$ is recurrent then $T$ is unitary. 
\end{proposition}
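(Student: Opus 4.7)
My plan is to reduce to the contraction case so that Proposition \ref{p.bimpliesunit}(i) can be applied. The key tool is Bourdon's power inequality for hyponormal operators from \cite{Bourdon}, which asserts that for a hyponormal $T:H\to H$, every $h\in H$ and every positive integer $n$,
\[
\|Th\|^n \leq \|T^nh\|\,\|h\|^{n-1}.
\]
Equivalently, $\|T^nh\| \geq \|h\|\,(\|Th\|/\|h\|)^n$ whenever $h\neq 0$. The strategy is to show, using this inequality, that $\|T\|\leq 1$, and then invoke Proposition \ref{p.bimpliesunit}(i) to conclude that $T$ is a surjective isometry, which on a complex Hilbert space means unitary.

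First I would argue that $\|T\|\leq 1$. Suppose, toward a contradiction, that there exists $h_0\in H$ with $\|Th_0\|>\|h_0\|$. By continuity of the map $h\mapsto \|Th\|-\|h\|$, there exist an open neighborhood $U$ of $h_0$ and a constant $c>1$ such that $\|Th\|\geq c\|h\|$ for every $h\in U$. Since $T$ is recurrent, Proposition \ref{p.dense} gives that $\Rec(T)$ is dense in $H$, so there exists a recurrent vector $h\in U$. Applying the Bourdon inequality to this $h$ yields $\|T^nh\|\geq \|h\|\,c^n\to +\infty$. But for any subsequence $(k_n)_{n\in\mathbb N}$ realizing the recurrence of $h$, we must have $\|T^{k_n}h\|\to \|h\|<+\infty$, a contradiction. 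Hence $\|T\|\leq 1$.

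Once $T$ is a contraction, Proposition \ref{p.bimpliesunit}(i) immediately gives that $T$ is a surjective isometry on $H$. A surjective isometry of a complex Hilbert space preserves the inner product (by the polarization identity) and is bijective, hence unitary. This completes the argument.

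The only slightly delicate step is the density argument: we need the strict inequality $\|Th_0\|>\|h_0\|$ to survive on an open set and transfer to a recurrent vector. I expect this to be the main obstacle only insofar as one must handle it carefully, but it poses no real difficulty because $\Rec(T)$ is dense and the strict inequality is an open condition. The rest of the proof is a clean combination of Bourdon's inequality with the contraction result of Proposition \ref{p.bimpliesunit}.
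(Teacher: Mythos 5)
Your proof is correct and follows essentially the same route as the paper: both arguments rest on Bourdon's monotonicity result for orbits of hyponormal operators (you use it in the form of the power inequality $\|Th\|^n\leq\|T^nh\|\,\|h\|^{n-1}$, the paper in the form of the existence of $\lim_n\|T^{n+1}h\|/\|T^nh\|$), both exploit the density of $\Rec(T)$ to conclude that $T$ is a contraction (the paper shows it is an isometry on $\Rec(T)$ directly), and both finish by invoking Proposition \ref{p.bimpliesunit}(i). The only cosmetic difference is your argument by contradiction via an open neighborhood, which could be shortened by noting that $\{h:\|Th\|\leq\|h\|\}$ is closed and contains the dense set $\Rec(T)$.
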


\begin{proof}
	Take a non-zero vector $h \in \Rec(T)$. Then there exists a strictly increasing sequence of positive integers $(k_n)_{n\in\mathbb N}$ such that $T^{k_n}h\to h$. Observe that $T^nh\neq 0$ for every positive integer $n$. It follows that 
	\begin{align*}
		\frac{\| T^{k_n+1}h\| }{\| T^{k_n}h\| }\to \frac{ \| Th\| }{\| h\| }. 
	\end{align*}
	From the last and the continuity of $T$ we get that
	\begin{align*}
		\frac{\| T^{k_n+m}h\| }{\| T^{k_n+m-1}h\| }\to \frac{ \| T^mh\| }{\| T^{m-1}h\| } 
	\end{align*}
	for every $m=1,2,\ldots$. Since $T$ is hyponormal and $h\notin \Ker( T) $, by \cite{Bourdon}*{Theorem 4.1} and the discussion in \cite{Bourdon}*{p. 350-351}, we have that the limit 
	\begin{align*}
		\lim_{n\to +\infty}\frac{\| T^{n+1}h\| }{\| T^nh\| } 
	\end{align*}
	exists. It readily follows that 
	\begin{align*}
		\frac{ \| Th\| }{\| h\| }=\frac{ \| T^2h\| }{\| Th\| }=\cdots =\frac{ \| T^mh\| }{\| T^{m-1}h\| }=\cdots . 
	\end{align*}
	The above equalities and an easy induction argument imply that
	\begin{align*}
		\| T^nh\| =\frac{{\| Th\| }^n}{\| h\|^{n-1} } \quad \text{for every} \quad n=1,2,\ldots . 
	\end{align*}
	The fact that $\| T^{k_n}h\| \to \| h\| \neq 0$ combined with the previous equality implies that $\| Th \| =\| h\|$. Hence $\| Th\|= \| h\| $ for every $h \in \Rec(T)$ and since $\Rec(T)$ is dense we conclude that $\| Th\| =\| h\| $ for every $h\in H$. Therefore $\| T\| =1$ and by Proposition \ref{p.bimpliesunit}, (i), we conclude that $T$ is unitary. 
\end{proof}

An amusing application of the notions in this paragraph is the observation that, if $T:H\to H$ is hyponormal with $\| T\| >1$ and $h\in H$ is a non-zero recurrent vector for $T$ then $T$ has a non-trivial invariant subspace. Indeed, by the proof of \cite{Bourdon}*{Theorem 4.1} and the proof of the previous proposition it follows that 
\begin{align*}
	\lim_{n\to +\infty}\| T^nh\|^{1/n}=\lim_{n\to +\infty}\frac{\| T^{n+1}h\| }{\| T^nh\| } =\frac{ \| Th\| }{\| h\| }=1< \| T\| . 
\end{align*}
Now \cite{Bourdon}*{Proposition 4.6} implies that $T$ has a non-trivial invariant subspace. The existence of non-trivial invariant subspaces for hyponormal operators is, in general, an open problem. For partial results see \cite{ScBr}.

We recall the notion of an $(m,p)$-isometry in general Banach spaces, introduced by Bayart in \cite{B1}. 

\begin{definition}
	Let $T:X\to X$ be an operator and let $m\in \mathbb{N}$, $p\in [1,+\infty)$. $T$ is called an $(m,p)$-isometry if 
	\begin{align*}
		\sum_{k=0}^{m}(-1)^{m-k}\binom{m}{k}\| T^kx\|^p=0 , 
	\end{align*}
	for every $x\in X$. $T$ is called an $m$-isometry if it is an $(m,p)$-isometry for some $p\in [1,+\infty )$. 
\end{definition}

\begin{proposition}
	If the operator $T:X\to X$ is an $m$-isometry and recurrent then $T$ is a surjective isometry. 
\end{proposition}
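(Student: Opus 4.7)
The plan is to exploit the polynomial growth of orbits for $m$-isometries. Recall that if $T$ is an $(m,p)$-isometry, then applying the defining identity to $T^n x$ in place of $x$ shows that the $m$-th forward difference of the sequence $n\mapsto \|T^n x\|^p$ vanishes identically. Consequently, for each fixed $x\in X$ there is a polynomial $P_x$ of degree at most $m-1$ such that
\begin{align*}
\|T^n x\|^p = P_x(n), \qquad n=0,1,2,\ldots.
\end{align*}
I would begin the proof by recording this fact, which is standard from the definition but will be the decisive ingredient.

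Next, fix $p\in[1,+\infty)$ such that $T$ is an $(m,p)$-isometry and pick any recurrent vector $x\in \Rec(T)$. By definition there is a strictly increasing sequence of positive integers $(k_n)_{n\in\mathbb N}$ with $T^{k_n}x\to x$, so $P_x(k_n)=\|T^{k_n}x\|^p\to \|x\|^p$. A real polynomial of degree at most $m-1$ that remains bounded along a sequence tending to $+\infty$ must be constant; hence $P_x$ is the constant polynomial $\|x\|^p$. In particular $\|Tx\|=\|x\|$ for every $x\in \Rec(T)$.

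Since $T$ is recurrent, Proposition \ref{p.dense} gives that $\Rec(T)$ is dense in $X$. Combined with the continuity of $T$ and of the norm, we conclude that $\|Tx\|=\|x\|$ for every $x\in X$; in particular $\|T\|\leq 1$. At this point $T$ is a contraction which is recurrent, so Proposition \ref{p.bimpliesunit}(i) applies and yields that $T$ is a surjective isometry, as required.

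There is no real obstacle here: the only non-routine point is the observation that the orbit norms are polynomial in $n$, after which recurrence forces these polynomials to be constants and the conclusion is immediate from Proposition \ref{p.bimpliesunit}(i).
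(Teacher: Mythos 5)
Your proof is correct, and it follows the same overall skeleton as the paper's: establish $\|Ty\|=\|y\|$ for every recurrent vector $y$, extend to all of $X$ by density of $\Rec(T)$ and continuity, and then invoke Proposition \ref{p.bimpliesunit}(i) to upgrade the recurrent contraction to a surjective isometry. The one point where you diverge is the mechanism for the first step. The paper cites Bayart's result that for an $m$-isometry the sequence $(\|T^nx\|)_{n\in\mathbb N}$ is eventually increasing, and then combines $\|T^{k_n}y\|\to\|y\|$ with $\|T^{k_n+1}y\|\to\|Ty\|$ and monotonicity to force $\|Ty\|=\|y\|$. You instead observe directly from the defining identity, applied to $T^nx$, that the $m$-th forward difference of $n\mapsto\|T^nx\|^p$ vanishes, so this sequence is a polynomial $P_x$ of degree at most $m-1$; boundedness of $P_x$ along $k_n\to+\infty$ then forces $P_x$ to be constant. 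This is essentially the fact underlying Bayart's proposition, so your argument is a self-contained reproof of the portion of it that is needed, and it is arguably cleaner: you get the full conclusion $\|T^nx\|=\|x\|$ for all $n$ at once for recurrent $x$, without needing the auxiliary limit $T^{k_n+1}y\to Ty$. Both routes are valid; the paper's is shorter given the citation, yours avoids the external reference.
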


\begin{proof}
	Let $y\in \Rec(T)$. Then there exists a strictly increasing sequence of positive integers $(k_n)_{n\in\mathbb N}$ such that $T^{k_n}y\to y$ and hence $T^{k_n+1}y\to Ty$. By \cite{B1}*{Proposition 3.1} the sequence $(\| T^nx\|)_{n\in\mathbb N}$ is eventually increasing for every $x\in X$. Therefore we get $\| Ty\| =\| y\|$. Since $\Rec(T)$ is dense in $X$ we readily see that $\| Tx\| =\| x\|$ for every $x\in X$. So $\| T\| =1$ and by Proposition \ref{p.bimpliesunit}, (i), we conclude that $T$ is a surjective isometry. 
\end{proof}

\section{The \texorpdfstring{$T\oplus T$}{T+T} problem: Product recurrence} \label{s.toplust} The problem which concerns us here is whether $T\oplus T$ is recurrent whenever $T$ is recurrent. The corresponding problem for hypercyclic operators was a long standing question that was only recently settled, in the negative, by de la Rosa and Read \cite{DR}, and also by Bayart and Matheron, \cite{BM2}, in classical Banach spaces. Our first result in this direction is the following. 

\begin{theorem}
	\label{t.prodrec} Let $T:X\to X$ be a recurrent operator and consider the commutant $\{ T \}'$ of $T$, i.e. $\{ T\}'=\{ A: AT=TA \}$. Suppose there exists a subset $\mathcal M$ of $\{ T \}'$ such that the set 
	\begin{align*}
		\{ x\in X: \overline{ \{Ax: A\in \mathcal M \} }=X \} 
	\end{align*}
	is residual in $X$. Then $T\oplus T$ is recurrent. 
\end{theorem}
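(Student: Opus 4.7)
The plan is to reduce, via Proposition \ref{p.dense}, the problem to showing that $\Rec(T\oplus T)$ is dense in $X\oplus X$. The key observation is that for any $A\in\{T\}'$, if $y\in\Rec(T)$ along a sequence $(k_n)_{n\in\mathbb N}$, then $(T\oplus T)^{k_n}(y,Ay)=(T^{k_n}y,AT^{k_n}y)\to (y,Ay)$ by the continuity of $A$ and the fact that $A$ commutes with $T$. Thus the vector $(y,Ay)$ is automatically recurrent for $T\oplus T$ for every $A\in\mathcal M$. The whole proof then reduces to an approximation argument combining this with the hypothesis on $\mathcal M$.

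First I would set $R\coloneqq\{x\in X:\overline{\{Ax:A\in\mathcal M\}}=X\}$. By hypothesis $R$ is residual in $X$, and by Proposition \ref{p.dense} the set $\Rec(T)$ is a dense $G_\delta$ subset of $X$. Since $X$ is a Banach space (hence a Baire space), Baire's category theorem gives that $R\cap\Rec(T)$ is itself residual, and in particular dense in $X$.

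Now fix $(x_1,x_2)\in X\oplus X$ and $\epsilon>0$; I will produce a recurrent vector for $T\oplus T$ within $\epsilon$ of $(x_1,x_2)$. By density of $R\cap\Rec(T)$, choose $y\in R\cap\Rec(T)$ with $\|y-x_1\|<\epsilon/2$, and let $(k_n)_{n\in\mathbb N}$ be a strictly increasing sequence of positive integers with $T^{k_n}y\to y$. Since $y\in R$, the set $\{Ay:A\in\mathcal M\}$ is dense in $X$, so there exists $A\in\mathcal M$ with $\|Ay-x_2\|<\epsilon/2$. Then $(y,Ay)$ lies within $\epsilon$ of $(x_1,x_2)$ in the natural norm on $X\oplus X$, and by the observation above
\begin{align*}
(T\oplus T)^{k_n}(y,Ay)=(T^{k_n}y,\,A T^{k_n}y)\longrightarrow (y,Ay)\quad\text{as}\quad n\to+\infty,
\end{align*}
so $(y,Ay)\in\Rec(T\oplus T)$. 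This shows $\Rec(T\oplus T)$ is dense in $X\oplus X$, and another application of Proposition \ref{p.dense} concludes that $T\oplus T$ is recurrent.

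The only subtlety worth highlighting is the Baire-category step ensuring that $R\cap\Rec(T)$ is nonempty and in fact dense: we use that $R$ is residual (i.e., contains a dense $G_\delta$) and that $\Rec(T)$ is also a dense $G_\delta$, so their intersection remains residual in the complete space $X$. Everything else is a direct computation exploiting $AT=TA$.
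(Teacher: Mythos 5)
Your proof is correct and follows essentially the same route as the paper's: both arguments intersect $\Rec(T)$ with the residual set of vectors having $\mathcal M$-dense "orbit," observe that $(y,Ay)$ is recurrent for $T\oplus T$ along the same sequence because $A$ commutes with (and is continuous, being in the commutant of) $T$, and conclude that such vectors are dense in $X\oplus X$. The only difference is that you spell out the Baire-category and approximation steps more explicitly, which the paper leaves implicit.
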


\begin{proof}
	Since $\Rec(T)$ is $G_{\delta }$ and dense, take $x\in \Rec (T)\cap \{ x\in X: \overline{ \{Ax: A\in \mathcal M \} }=X \}$ and consider $A\in \mathcal M$. There exists a strictly increasing sequence of positive integers $(k_n)_{n\in\mathbb N}$ such that $T^{k_n}x\to x$ as $n\to+\infty$. Since $A$ and $T$ commute we get that $T^{k_n}Ax\to Ax$. It follows that 
	\begin{align*}
		\{ x\oplus Ax: x\in \Rec(T)\cap \{ x\in X: \overline{ \{Ax: A\in \mathcal M \} }=X \},\ A\in \mathcal M \} \subset \Rec(T\oplus T). 
	\end{align*}
	Observe now that the left hand side set in the previous inclusion is dense in $X\oplus X$ which in turn implies that $T\oplus T$ is recurrent. 
\end{proof}

\begin{corollary}
	Let $T:X\to X$ be an operator. If $T$ is cyclic and recurrent then $T\oplus T$ is recurrent. 
\end{corollary}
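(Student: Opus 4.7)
The plan is to apply Theorem~\ref{t.prodrec} with the natural candidate $\mathcal M = \{p(T) : p \in \mathbb C[z]\}$, the algebra of polynomials in $T$. Every $p(T)$ commutes with $T$, so $\mathcal M \subset \{T\}'$, and the set $\{x \in X : \overline{\{Ax : A \in \mathcal M\}} = X\}$ coincides with $C(T)$, the set of cyclic vectors of $T$, which is non-empty by hypothesis. A routine argument using a countable dense subset of $X$ and polynomials with Gaussian-rational coefficients shows $C(T)$ is always a $G_\delta$-subset of $X$.

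Since verifying that $C(T)$ is residual (equivalently, dense) in full generality would require a separate argument, I would instead carry out the proof of Theorem~\ref{t.prodrec} directly in this concrete setting, thereby bypassing the residuality question. Fix a cyclic vector $x_0 \in X$, and take arbitrary $(y_1, y_2) \in X \oplus X$ and $\epsilon > 0$. By cyclicity, choose polynomials $p_1, p_2$ with $\|p_i(T) x_0 - y_i\| < \epsilon/2$ for $i = 1, 2$, and set $M := \max(\|p_1(T)\|, \|p_2(T)\|)$. By the density of $\Rec(T)$ (Proposition~\ref{p.dense}), pick $z \in \Rec(T)$ with $\|z - x_0\| < \epsilon/(2M)$, so that $x_i := p_i(T) z$ satisfies $\|x_i - y_i\| < \epsilon$. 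Since $z \in \Rec(T)$, there is a strictly increasing sequence $(k_n)_{n \in \mathbb N}$ with $T^{k_n} z \to z$, and because $p_i(T)$ commutes with $T$ we obtain
\[
T^{k_n} x_i \;=\; p_i(T)\,T^{k_n} z \;\longrightarrow\; p_i(T) z \;=\; x_i \qquad (i = 1, 2)
\]
along the \emph{same} sequence $(k_n)$. Hence $(x_1, x_2) \in \Rec(T \oplus T)$ and lies in the $\epsilon$-ball around $(y_1, y_2)$; thus $\Rec(T \oplus T)$ is dense in $X \oplus X$, and $T \oplus T$ is recurrent by Proposition~\ref{p.dense}.

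The crucial feature --- which is essentially the content of Theorem~\ref{t.prodrec} --- is that the single recurrence sequence $(k_n)$ coming from the perturbed base vector $z$ serves \emph{both} coordinates simultaneously, because the commuting operators $p_i(T)$ can be pulled through $T^{k_n}$. The main conceivable obstacle would only arise if one insisted on invoking Theorem~\ref{t.prodrec} as a black box, since then one would need to prove that $C(T)$ is residual for every cyclic $T$ on a separable Banach space; the direct argument above avoids this entirely by perturbing a fixed cyclic vector $x_0$ inside $\Rec(T)$, rather than searching for a vector that is simultaneously cyclic and recurrent.
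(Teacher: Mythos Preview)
Your direct argument is correct and complete (modulo the harmless convention $M\geq 1$ to avoid division by zero when $p_1(T)=p_2(T)=0$). It is, however, genuinely different from the paper's proof.

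The paper does apply Theorem~\ref{t.prodrec} as a black box with $\mathcal M=\{p(T):p\ \text{polynomial}\}$, and it \emph{does} verify that the set $C(T)$ of cyclic vectors is residual. The key observation you are missing is that recurrence itself supplies the missing hypothesis: by Proposition~\ref{p.pspectrum} the point spectrum of $T^*$ is contained in $\mathbb T$, so in particular $\sigma_p(T^*)$ has empty interior, and then a theorem of Herrero \cite{H}*{Theorem~1} guarantees that $C(T)$ is $G_\delta$-dense. Your perturbation argument sidesteps this entirely: rather than finding a vector that is simultaneously cyclic and recurrent, you push a recurrent vector $z$ close to the fixed cyclic vector $x_0$ and exploit the continuity of the $p_i(T)$.

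What each approach buys: the paper's route establishes the stronger intermediate fact that cyclic vectors are residual for any recurrent cyclic operator, at the cost of invoking Herrero's external result. Your route is more elementary and entirely self-contained, needing only Proposition~\ref{p.dense} and the commutation $T^{k_n}p(T)=p(T)T^{k_n}$; it is essentially a direct unrolling of the proof of Theorem~\ref{t.prodrec} that replaces ``residual'' by ``contains one point $x_0$ that can be approximated by recurrent vectors''.
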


\begin{proof}
	Proposition \ref{p.pspectrum} implies that $ \sigma_p(T^*) ^{\circ}=\emptyset$ and in view of \cite{H}*{Theorem 1} we conclude that the set of cyclic vectors for $T$ is $G_{\delta }$-dense in $X$. Now, the conclusion follows by applying Theorem \ref{t.prodrec} for $\mathcal M :=\{ p(T):  p \  \text{polynomial} \}$ 
\end{proof}

Trivially if $T$ is rigid then $T\oplus T$ is recurrent. Thus, according to the following proposition, $T\oplus T$ is recurrent whenever $T$ is a recurrent unitary operator on a complex Hilbert space. 

\begin{proposition}
	\label{p.urecequalsrig} Let $H$ be a separable Hilbert space and $U:H\to H$ be a unitary operator. The following are equivalent. 
	\begin{itemize}
		\item[(i)] $U$ is recurrent. 
		\item[(ii)] $U$ is rigid. 
	\end{itemize}
\end{proposition}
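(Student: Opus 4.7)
The direction (ii)$\Rightarrow$(i) is immediate: if $U^{k_n}x\to x$ for every $x\in H$ then every vector of $H$ is recurrent for $U$, and $U$ is recurrent by Proposition~\ref{p.dense}. For the converse (i)$\Rightarrow$(ii) my plan is to reduce to the already-established equivalence between recurrence and rigidity for multiplication operators on $L^2$-spaces (Theorem~\ref{t.multcond}(A)), using the spectral theorem as the bridge.

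More precisely, since $H$ is a separable complex Hilbert space and $U$ is unitary (hence normal), the multiplication form of the spectral theorem (see \cite{Con}) supplies a finite positive Borel measure space $(X,\mu)$, a function $\phi\in L^\infty(X,\mu)$, and a unitary isomorphism $W:H\to L^2(X,\mu)$ such that $WUW^{-1}=M_\phi$; because $U$ is unitary, $|\phi|=1$ $\mu$-almost everywhere. By Lemma~\ref{l.equiv}(i) the operator $M_\phi$ inherits recurrence from $U$, so Theorem~\ref{t.multcond}(A) produces a strictly increasing sequence $(k_n)_{n\in\mathbb N}$ of positive integers with $\phi(x)^{k_n}\to 1$ for $\mu$-almost every $x\in X$. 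Since $|\phi^{k_n}-1|\leq 2$ pointwise and $\mu$ is finite, the dominated convergence theorem upgrades this pointwise statement to $\| M_\phi^{k_n}f-f\|_{L^2(X,\mu)}\to 0$ for every $f\in L^2(X,\mu)$, which is exactly rigidity of $M_\phi$ along $(k_n)_{n\in\mathbb N}$. Transporting through $W^{-1}$ by Lemma~\ref{l.equiv}(ii), I would conclude that $U$ itself is rigid.

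The only genuinely delicate point is the invocation of the multiplication form of the spectral theorem, in particular the fact that for a normal operator on a \emph{separable} Hilbert space the underlying measure can be taken to be a finite Borel measure so that Theorem~\ref{t.multcond} applies verbatim; this is classical and I would just cite \cite{Con}. Beyond that, the proof is essentially a spectral-theorem wrapper around the already-proved equivalence (i)$\Leftrightarrow$(ii) of Theorem~\ref{t.multcond}, and I do not anticipate any real obstacle. A more hands-on alternative would be to work directly with the scalar spectral measures $\mu_{x,x}(A):=\| E(A)x\|^2$ associated to the projection-valued measure $E$ of $U$, applied to a dense sequence $(x_j)_{j\in\mathbb N}$ of recurrent vectors, and then diagonally extract a common subsequence $(k_n)_{n\in\mathbb N}$ with $\int_{\mathbb T}|z^{k_n}-1|^2\,d\mu_{x_j,x_j}(z)\to 0$ for all $j$; but this amounts to reproving the relevant case of Theorem~\ref{t.multcond} by hand, so the spectral-theorem reduction is cleaner.
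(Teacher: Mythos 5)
Your proof is correct and follows essentially the same route as the paper: reduce via the spectral theorem (multiplication form) to a multiplication operator $M_\phi$ with unimodular symbol on a finite measure space, and then invoke the equivalence of recurrence and rigidity from Theorem~\ref{t.multcond}(A). The only cosmetic difference is that you re-run the dominated convergence step to pass from the pointwise condition $\phi^{k_n}\to 1$ a.e.\ to rigidity of $M_\phi$, whereas that implication is already part of the statement of Theorem~\ref{t.multcond}(A) and could be cited directly.
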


\begin{proof}
	We only have to prove that (i) implies (ii), so suppose that $U$ is recurrent. By the spectral theorem for unitary operators on separable Hilbert spaces, see \cite{RaRo}*{Theorem 1.6}, there exists a measure space $(X,\mathcal X,\mu)$, where $\mu$ is a non-negative finite Borel measure, a unitary map $\Phi:H\to L^2(X,\mu)$, and a function $u\in L^\infty(X,\mu)$ with $|u|=1$ such that the operator $\Phi\circ U\circ \Phi^{-1}:L^2(X,\mu)\to L^2(X,\mu)$ is the multiplication operator
\begin{align*}
 M_u f \coloneqq u f, \quad \forall f\in L^2(X,\mu).
\end{align*}
Since $U$ is recurrent we immediately get that $M_u$ is recurrent. By Theorem~\ref{t.multcond} we get that $M_u$ is rigid and thus $U:H\to H$ is rigid on $H$. \end{proof}

\begin{remark}
	If $T:H \to H$ is a power bounded operator acting on a separable Hilbert space then, by Proposition \ref{p.bimpliesunit}, we get that $T$ is similar to a unitary operator. Thus Proposition \ref{p.urecequalsrig} remains valid if the hypothesis that $T$ is unitary is replaced by the hypothesis that $T$ is power bounded.
\end{remark}

The analogue of the previous proposition for a general Banach space $X$ is, to the best of our knowledge, an open problem:

\begin{question}
	\label{q.suriso} Let $T:X\to X$ be a surjective isometry. Is it true that $T$ is recurrent if and only if $T$ is rigid? 
\end{question}

Finally, we indicate a connection of the last question with another open problem in Operator Theory. An operator $T:X\to X$, acting on a separable Banach space $X$, is called \emph{orbit reflexive} if the only operators $S:X\to X$ such that $Sx\in \overline{\Orb(x,T)}$ for every $x\in X$ are those in $\overline{\{I,T,T^2,\ldots\}} ^{\operatorname{SOT}}$. In \cite{HNRR} it was shown that ``many'' Hilbert-space operators are orbit reflexive, for instance, normal, compact, algebraic operators and contractions. Examples of Hilbert-space operators that are \emph{not} orbit-reflexive only recently appeared in \cites{GrivRog,MV}. It is an open question whether every power bounded operator $T:X\to X$ is orbit reflexive. See for example \cite{HIH}. Observe that a positive answer to this question would imply an affirmative answer to Question \ref{q.suriso} above.

For every recurrent operator $T$ appearing in the present paper, $T\oplus T$ is also recurrent. However, the following questions seems to be open. 

\begin{question}
	Let $T:X\to X$ be a recurrent operator. Is it true that the operator $T\oplus T$ is recurrent on $X\oplus X$? 
\end{question}

\begin{bibsection}
	\begin{biblist}
		\bib{ADMV}{article}{ author={Aleman, A.}, author={Duren, P.}, author={Mart{\'{\i}}n, M. J.}, author={Vukoti{\'c}, D.}, title={Multiplicative isometries and isometric zero-divisors}, journal={Canad. J. Math.}, volume={62}, date={2010}, number={5}, pages={961--974}, issn={0008-414X}, review={\MR{2730350 (2011k:46029)}} }
		
		\bib{AB}{article}{ author={Ansari, S. I.}, author={Bourdon, P. S.}, title={Some properties of cyclic operators}, journal={Acta Sci. Math. (Szeged)}, volume={63}, date={1997}, number={1-2}, pages={195--207}, issn={0001-6969}, review={\MR{1459787 (98e:47009)}}, } 

		\bib{A1}{article}{ author={Ansari, S. I.}, title={Hypercyclic and cyclic vectors}, journal={J. Funct. Anal.}, volume={128}, date={1995}, number={2}, pages={374--383}, issn={0022-1236}, review={\MR{1319961 (96h:47002)}}}
		
		\bib{BADGR}{article}{ author={Badea, C.}, author={Grivaux, S.}, title={Unimodular eigenvalues, uniformly distributed sequences and linear dynamics}, journal={Adv. Math.}, volume={211}, date={2007}, number={2}, pages={766--793}, issn={0001-8708}, review={\MR{2323544 (2008g:47011)}}}
		
		\bib{B1}{article}{ author={Bayart, F.}, title={$m$-isometries on Banach spaces}, journal={Math. Nachr.}, volume={284}, date={2011}, number={17-18}, pages={2141--2147}, issn={0025-584X}, review={\MR{2859754 (2012k:47020)}}}
		
		\bib{BaGr2}{article}{ author={Bayart, F.}, author={Grivaux, S.}, title={Hypercyclicity and unimodular point spectrum}, journal={J. Funct. Anal.}, volume={226}, date={2005}, number={2}, pages={281--300}, issn={0022-1236}, review={\MR{2159459 (2006i:47014)}} }
		
		\bib{BaGr}{article}{ author={Bayart, F.}, author={Grivaux, S.}, title={Frequently hypercyclic operators}, journal={Trans. Amer. Math. Soc.}, volume={358}, date={2006}, number={11}, pages={5083--5117 (electronic)}, issn={0002-9947}, review={\MR{2231886 (2007e:47013)}}}
		
		\bib{BaGr3}{article}{ author={Bayart, F.}, author={Grivaux, S.}, title={Invariant Gaussian measures for operators on Banach spaces and linear dynamics}, journal={Proc. Lond. Math. Soc. (3)}, volume={94}, date={2007}, number={1}, pages={181--210}, issn={0024-6115}, review={\MR{2294994 (2008i:47019)}} }
		
		\bib{BM2}{article}{ author={Bayart, F.}, author={Matheron, {\'E}.}, title={Hypercyclic operators failing the hypercyclicity criterion on classical Banach spaces}, journal={J. Funct. Anal.}, volume={250}, date={2007}, number={2}, pages={426--441}, issn={0022-1236}, review={\MR{2352487 (2008k:47016)}} }
		
		\bib{BM}{book}{ author={Bayart, F.}, author={Matheron, {\'E}.}, title={Dynamics of linear operators}, series={Cambridge Tracts in Mathematics}, volume={179}, publisher={Cambridge University Press}, place={Cambridge}, date={2009}, pages={xiv+337}, isbn={978-0-521-51496-5}, review={\MR{2533318 (2010m:47001)}} }
		
		\bib{BEPES}{article}{author={B{\`e}s, J.}, author={Peris, A.}, title={Hereditarily hypercyclic operators}, journal={J. Funct. Anal.}, volume={167}, date={1999}, number={1}, pages={94--112}, issn={0022-1236}, review={\MR{1710637 (2000f:47012)}} }
		
		\bib{BoMaPe}{article}{author={Bonet, J.}, author={Mart{\'{\i}}nez-Gim{\'e}nez, F.}, author={Peris, A.}, title={Linear chaos on Fr\'echet spaces}, note={Dynamical systems and functional equations (Murcia, 2000)}, journal={Internat. J. Bifur. Chaos Appl. Sci. Engrg.}, volume={13}, date={2003}, number={7}, pages={1649--1655}, issn={0218-1274}, review={\MR{2015614 (2004i:47016)}}}
		
		\bib{Bourdon}{article}{ author={Bourdon, P. S.}, title={Orbits of hyponormal operators}, journal={Michigan Math. J.}, volume={44}, date={1997}, number={2}, pages={345--353}, issn={0026-2285}, review={\MR{1460419 (98e:47037)}} }
		
		\bib{ScBr}{article}{ author={Brown, S. W.}, title={Hyponormal operators with thick spectra have invariant subspaces}, journal={Ann. of Math. (2)}, volume={125}, date={1987}, number={1}, pages={93--103}, issn={0003-486X}, review={\MR{873378 (88c:47010)}}}
		
		\bib{ChSe}{article}{ author={Chan, K.}, author={Seceleanu, I.}, title={Hypercyclicity of shifts as a zero-one law of orbital limit points}, journal={J. Operator Theory}, volume={67}, date={2012}, number={1}, pages={257--277}, issn={0379-4024}, review={\MR{2881542}}, }
		
		\bib{Con}{book}{ author={Conway, J. B.}, title={A course in operator theory}, series={Graduate Studies in Mathematics}, volume={21}, publisher={American Mathematical Society}, place={Providence, RI}, date={2000}, pages={xvi+372}, isbn={0-8218-2065-6}, review={\MR{1721402 (2001d:47001)}}, }
		
		\bib{CP}{article}{ author={Costakis, G.}, author={Parissis, I.}, title={Szemer\'edi's theorem, frequent hypercyclicity and multiple recurrence}, journal={Math. Scand.}, volume={110}, date={2012}, number={2}, pages={251--272}, issn={0025-5521}, review={\MR{2943720}}, }
		
		\bib{DR}{article}{ author={de la Rosa, M.}, author={Read, C.}, title={A hypercyclic operator whose direct sum $T\oplus T$ is not hypercyclic}, journal={J. Operator Theory}, volume={61}, date={2009}, number={2}, pages={369--380}, issn={0379-4024}, review={\MR{2501011 (2010e:47023)}}, }
		
		\bib{DriMbe}{article}{ author={Drissi, D.}, author={Mbekhta, M.}, title={Elements with generalized bounded conjugation orbits}, journal={Proc. Amer. Math. Soc.}, volume={129}, date={2001}, number={7}, pages={2011--2016 (electronic)}, issn={0002-9939}, review={\MR{1825911 (2002e:47022)}}}
		
		\bib{Duren}{book}{ author={Duren, P. L.}, title={Theory of $H^{p}$ spaces}, series={Pure and Applied Mathematics, Vol. 38}, publisher={Academic Press}, place={New York}, date={1970}, pages={xii+258}, review={\MR{0268655 (42 \#3552)}}, }
		
		\bib{EIS}{article}{author = {Eisner, T.}, title = {Rigidity of contractions on Hilbert spaces}, date = {2009}, Journal = {Group. Geom. Dynam.}, Status = {to appear}, Eprint = {http://arxiv.org/abs/0909.4695}}
		
		\bib{EISGRI}{article}{ author={Eisner, T.}, author={Grivaux, S.}, title={Hilbertian Jamison sequences and rigid dynamical systems}, journal={J. Funct. Anal.}, volume={261}, date={2011}, number={7}, pages={2013--2052}, issn={0022-1236}, review={\MR{2822322}} }
		
		\bib{Fe3}{article}{ author={Feldman, N. S.}, title={The dynamics of cohyponormal operators}, conference={ title={Trends in Banach spaces and operator theory}, address={Memphis, TN}, date={2001}, }, book={ series={Contemp. Math.}, volume={321}, publisher={Amer. Math. Soc.}, place={Providence, RI}, }, date={2003}, pages={71--85}, review={\MR{1978808 (2004b:47033)}}, }
		
		\bib{Flytz}{article}{ author={Flytzanis, E.}, title={Unimodular eigenvalues and invariant measures for linear operators}, journal={Monatsh. Math.}, volume={119}, date={1995}, number={4}, pages={267--273}, issn={0026-9255}, review={\MR{1328817 (96b:47004)}}}
		
		\bib{Fur}{book}{ author={Furstenberg, H.}, title={Recurrence in ergodic theory and combinatorial number theory}, note={M. B. Porter Lectures}, publisher={Princeton University Press}, place={Princeton, N.J.}, date={1981}, pages={xi+203}, isbn={0-691-08269-3}, review={\MR{603625 (82j:28010)}}, } 

		\bib{FurWeiss}{article}{ author={Furstenberg, H.}, author={Weiss, B.}, title={The finite multipliers of infinite ergodic transformations}, conference={ title={The structure of attractors in dynamical systems (Proc. Conf., North Dakota State Univ., Fargo, N.D., 1977)}, }, book={ series={Lecture Notes in Math.}, volume={668}, publisher={Springer}, place={Berlin}, }, date={1978}, pages={127--132}, review={\MR{518553 (80b:28023)}}, }
		
		\bib{GM}{article}{ author={Glasner, S.}, author={Maon, D.}, title={Rigidity in topological dynamics}, journal={Ergodic Theory Dynam. Systems}, volume={9}, date={1989}, number={2}, pages={309--320}, issn={0143-3857}, review={\MR{1007412 (90h:54050)}} } 

		\bib{GoSh}{article}{ author={Godefroy, G.}, author={Shapiro, J. H.}, title={Operators with dense, invariant, cyclic vector manifolds}, journal={J. Funct. Anal.}, volume={98}, date={1991}, number={2}, pages={229--269}, issn={0022-1236}, review={\MR{1111569 (92d:47029)}}}
		
		\bib{LuMo}{article}{ author={Gonz{\'a}lez, L. B.}, author={Rodr{\'{\i}}guez, A. M.}, title={Non-finite-dimensional closed vector spaces of universal functions for composition operators}, journal={J. Approx. Theory}, volume={82}, date={1995}, number={3}, pages={375--391}, issn={0021-9045}, review={\MR{1348728 (96f:30034)}} }
		
		\bib{GOTT}{article}{ author={Gottschalk, W. H.}, title={A note on pointwise nonwandering transformations}, journal={Bull. Amer. Math. Soc.}, volume={52}, date={1946}, pages={488--489}, issn={0002-9904}, review={\MR{0016550 (8,34a)}}, }
		
		\bib{Griv}{article}{ author={Grivaux, S.}, title={A probabilistic version of the frequent hypercyclicity criterion}, journal={Studia Math.}, volume={176}, date={2006}, number={3}, pages={279--290}, issn={0039-3223}, review={\MR{2284991 (2007j:47016)}} }
		
		\bib{Griv1}{article}{ Author = {Grivaux, S.}, date={2012}, Title = {A new class of frequently hypercyclic operators}, Journal = {Indiana Univ. Math. J.}, Status ={to appear},Eprint = {http://arxiv.org/abs/1001.2026}}
		
		\bib{GrivRog}{article}{ author={Grivaux, S.}, author={Roginskaya, M.}, title={On Read's type operators on Hilbert spaces}, journal={Int. Math. Res. Not. IMRN}, date={2008}, pages={Art. ID rnn 083, 42}, issn={1073-7928}, review={\MR{2439560 (2010c:47022)}} }
		
		\bib{GE}{article}{ author={Grosse-Erdmann, K.-G.}, title={Universal families and hypercyclic operators}, journal={Bull. Amer. Math. Soc. (N.S.)}, volume={36}, date={1999}, number={3}, pages={345--381}, issn={0273-0979}, review={\MR{1685272 (2000c:47001)}}  } 
		
		\bib{GE2}{article}{ author={Grosse-Erdmann, K.-G.}, title={Recent developments in hypercyclicity}, language={English, with English and Spanish summaries}, journal={RACSAM Rev. R. Acad. Cienc. Exactas F\'\i s. Nat. Ser. A Mat.}, volume={97}, date={2003}, number={2}, pages={273--286}, issn={1578-7303}, review={\MR{2068180 (2005c:47010)}}, }
		
		\bib{GE3}{article}{ author={Grosse-Erdmann, K.-G.}, title={Dynamics of linear operators}, conference={ title={Topics in complex analysis and operator theory}, }, book={ publisher={Univ. M\'alaga}, place={M\'alaga}, }, date={2007}, pages={41--84}, review={\MR{2394656 (2009e:47023)}}, }
		
		\bib{ERMO}{article}{ author={Grosse-Erdmann, K.-G.}, author={Mortini, R.}, title={Universal functions for composition operators with non-automorphic symbol}, journal={J. Anal. Math.}, volume={107}, date={2009}, pages={355--376}, issn={0021-7670}, review={\MR{2496409 (2010c:30073)}}}
		
		\bib{GrossePeris}{book}{ author={Grosse-Erdmann, K.-G.}, author={Peris Manguillot, A.}, title={Linear chaos}, series={Universitext}, publisher={Springer}, place={London}, date={2011}, pages={xii+386}, isbn={978-1-4471-2169-5}, review={\MR{2919812}}  }
		
		\bib{HIH}{article}{ author={Hadwin, Don}, author={Ionascu, I.}, author={Yousefi, H.}, Title = {Null-orbit reflexive operators}, journal={Oper. Matrices}, volume={6}, date={2012}, number={3}, pages={567--576} }
		
		\bib{HNRR}{article}{ author={Hadwin, Don}, author={Nordgren, E.}, author={Radjavi, H.}, author={Rosenthal, P.}, title={Orbit-reflexive operators}, journal={J. London Math. Soc. (2)}, volume={34}, date={1986}, number={1}, pages={111--119}, issn={0024-6107}, review={\MR{859152 (88d:47010)}} }
		
		\bib{H}{article}{ author={Herrero, D. A.}, title={Possible structures for the set of cyclic vectors}, journal={Indiana Univ. Math. J.}, volume={28}, date={1979}, number={6}, pages={913--926}, issn={0022-2518}, review={\MR{551155 (81k:47023)}} }
		
		\bib{KaTza}{article}{ author={Katznelson, Y.}, author={Tzafriri, L.}, title={On power bounded operators}, journal={J. Funct. Anal.}, volume={68}, date={1986}, number={3}, pages={313--328}, issn={0022-1236}, review={\MR{859138 (88e:47006)}} }
		
		\bib{Kit}{book}{ author={Kitai, C.}, title={INVARIANT CLOSED SETS FOR LINEAR OPERATORS}, note={Thesis (Ph.D.)--University of Toronto (Canada)}, publisher={ProQuest LLC, Ann Arbor, MI}, date={1982}, pages={(no paging)}, isbn={978-0315-10381-8}, review={\MR{2632793}}, }
		
		\bib{MULS}{article}{ author={Le{\'o}n-Saavedra, F.}, author={M{\"u}ller, V.}, title={Rotations of hypercyclic and supercyclic operators}, journal={Integral Equations Operator Theory}, volume={50}, date={2004}, number={3}, pages={385--391}, issn={0378-620X}, review={\MR{2104261 (2005g:47009)}}  }
		
		\bib{MoSa2}{article}{ author={Montes-Rodr{\'{\i}}guez, A.}, author={Salas, H. N.}, title={Supercyclic subspaces: spectral theory and weighted shifts}, journal={Adv. Math.}, volume={163}, date={2001}, number={1}, pages={74--134}, issn={0001-8708}, review={\MR{1867204 (2002i:47010)}}  }
		
		\bib{Muller}{article}{author={M{\"u}ller, Vladim{\'{\i}}r}, title={Orbits, weak orbits and local capacity of operators}, journal={Integral Equations Operator Theory}, volume={41}, date={2001}, number={2}, pages={230--253}, issn={0378-620X}, review={\MR{1847174 (2002g:47009)}}, doi={10.1007/BF01295307},}

		\bib{MV}{article}{ author={M{\"u}ller, V.}, author={Vr{\v{s}}ovsk{\'y}, J.}, title={On orbit-reflexive operators}, journal={J. Lond. Math. Soc. (2)}, volume={79}, date={2009}, number={2}, pages={497--510}, issn={0024-6107}, review={\MR{2496526 (2010b:47021)}}  }
		
		\bib{RaRo}{article}{
		   author={Radjavi, Heydar},
		   author={Rosenthal, Peter},
		   title={Paul Halmos and invariant subspaces},
		   conference={
		      title={A glimpse at Hilbert space operators},
		   },
		   book={
		      series={Oper. Theory Adv. Appl.},
		      volume={207},
		      publisher={Birkh\"auser Verlag},
		      place={Basel},
		   },
		   date={2010},
		   pages={341--349},
		   review={\MR{2743426 (2011k:47011)}}
		}
		
		\bib{Rudin}{book}{author={Rudin, W.}, title={Fourier analysis on groups}, series={Wiley Classics Library}, note={Reprint of the 1962 original; A Wiley-Interscience Publication}, publisher={John Wiley \& Sons Inc.}, place={New York}, date={1990}, pages={x+285}, review={\MR{1038803 (91b:43002)}}}

		\bib{Salas}{article}{ author={Salas, H. N.}, title={Hypercyclic weighted shifts}, journal={Trans. Amer. Math. Soc.}, volume={347}, date={1995}, number={3}, pages={993--1004}, issn={0002-9947}, review={\MR{1249890 (95e:47042)}} }
		
		\bib{Sece:thesis}{book}{ author={Seceleanu, I.}, title={Hypercyclic operators and their orbital limit points}, note={Thesis (Ph.D.)--Bowling Green State University}, publisher={ProQuest LLC, Ann Arbor, MI}, date={2010}, pages={75}, isbn={978-1124-23445-8}, review={\MR{2792899}}, }
		
		\bib{ShapB}{book}{ author={Shapiro, J. H.}, title={Composition operators and classical function theory}, series={Universitext: Tracts in Mathematics}, publisher={Springer-Verlag}, place={New York}, date={1993}, pages={xvi+223}, isbn={0-387-94067-7}, review={\MR{1237406 (94k:47049)}}, }
		
		\bib{WZ}{book}{ author={Wheeden, R. L.}, author={Zygmund, A.}, title={Measure and integral}, note={An introduction to real analysis; Pure and Applied Mathematics, Vol. 43}, publisher={Marcel Dekker Inc.}, place={New York}, date={1977}, pages={x+274}, isbn={0-8247-6499-4}, review={\MR{0492146 (58 \#11295)}}, }
	\end{biblist}
\end{bibsection}

\end{document}